\newtheorem{theorem}{Theorem}[section]
\newtheorem*{theorem*}{Theorem}
\newtheorem{proposition}[theorem]{Proposition}
\newtheorem{lemma}[theorem]{Lemma}
\theoremstyle{definition}
\newtheorem{definition}[theorem]{Definition}
\newtheorem{remark}[theorem]{Remark}
\newtheorem{warning}[theorem]{Caution}
\newtheorem{conceptualremark}[theorem]{Conceptual Remark}
\renewcommand\subset{\subseteq}
\renewcommand{\geq}{\geqslant}
\renewcommand{\leq}{\leqslant}
\renewcommand{\vec}[1]{\mathbf{#1}}
\newcommand{\surf}{\mathfrak{S}}
\newcommand{\Complex}{\mathbb{C}}
\newcommand{\idealtriang}{\lambda}
\newcommand{\Zinteger}{\mathbb{Z}}
\newcommand{\Qrational}{\mathbb{Q}}
\title[Quantum snakes]{Points of quantum $\mathrm{SL}_n$ coming from quantum snakes}
\author[D. C. Douglas]{Daniel C. Douglas}
\address{Department of Mathematics, Virginia Tech, 225 Stanger St, Blacksburg, VA 24061, USA}
\email{dcdouglas@vt.edu}
\date{\today}
\begin{document}
\begin{abstract}
We show that the quantized Fock-Goncharov monodromy matrices satisfy the relations of the quantum special linear group $\mathrm{SL}_n^q$.  The proof employs a quantum version of the technology of Fock-Goncharov called snakes.  This relationship between higher Teichm{\"u}ller theory and quantum group theory is integral to the construction of a $\mathrm{SL}_n$-quantum trace map for knots in thickened surfaces, partially developed in \cite{DouglasArxiv21}.  
\end{abstract}
\maketitle
For a finitely generated group $\Gamma$ and a suitable Lie group $G$, a primary object of study in low-dimensional geometry and topology is the $G$-character variety
\begin{equation*}
	\mathscr{R}_G(\Gamma) = \left\{  \rho : \Gamma \longrightarrow G  \right\} /\!\!/ \hspace{2.5pt} G
\end{equation*}
consisting of group homomorphisms $\rho$ from $\Gamma$ to $G$, considered up to conjugation.  Here, the quotient is taken in the algebraic geometric sense of Geometric Invariant Theory \cite{Mumford94}.  Character varieties can be explored using a wide variety of mathematical skill sets.  Some examples include the Higgs bundle approach of Hitchin \cite{HitchinTopology92}, the dynamics approach of Labourie \cite{LabourieInvent06}, and the representation theory approach of Fock-Goncharov~\cite{FockIHES06}.

In the case where the group $\Gamma = \pi_1(\surf)$ is the fundamental group of a punctured surface $\surf$ of finite topological type, and where the Lie group $G = \mathrm{SL}_n(\mathbb{C})$ is the special linear group, we are interested in studying a relationship between two competing deformation quantizations of the character variety $\mathscr{R}_{\mathrm{SL}_n(\mathbb{C})}(\surf) := \mathscr{R}_{\mathrm{SL}_n(\mathbb{C})}(\pi_1(\surf))$.  Here, a deformation quantization $\{ \mathscr{R}^q \}_q$ of a Poisson space $\mathscr{R}$ is a family of non-commutative algebras $\mathscr{R}^q$ parametrized by a nonzero complex parameter $q = e^{2 \pi i \hbar}$, such that the lack of commutativity in $\mathscr{R}^q$ is infinitesimally measured in the classical limit $\hbar \to 0$ by the Poisson bracket of the space $\mathscr{R}$.  In the case where  $\mathscr{R} = \mathscr{R}_{\mathrm{SL}_n(\mathbb{C})}(\surf)$ is the character variety, the bracket is provided by the Goldman Poisson structure on  $\mathscr{R}_{\mathrm{SL}_n(\mathbb{C})}(\surf)$ \cite{GoldmanAdvMath84, GoldmanInvent86}.  

The first quantization of the character variety is the $\mathrm{SL}_n(\mathbb{C})$-skein algebra $\mathscr{S}^q_n(\surf)$ of the surface $\surf$; see \cite{Turaev89, WittenCommMathPhys89, PrzytyckiBullPolishAcad91, BullockKnotTheory99, KuperbergCommMathPhys96, SikoraAlgGeomTop05}.  The skein algebra is motivated by the classical algebraic geometric approach to studying the character variety $\mathscr{R}_{\mathrm{SL}_n(\mathbb{C})}(\surf)$ via its algebra of regular functions $\mathbb{C}[\mathscr{R}_{\mathrm{SL}_n(\mathbb{C})}(\surf)]$.  An example of a regular function is the trace function $\mathrm{Tr}_\gamma : \mathscr{R}_{\mathrm{SL}_n(\mathbb{C})}(\surf) \to \mathbb{C}$ associated to a closed curve $\gamma \in \pi_1(\surf)$ sending a representation $\rho : \pi_1(\surf) \to \mathrm{SL}_n(\Complex)$ to the trace $\mathrm{Tr}(\rho(\gamma)) \in \mathbb{C}$ of the matrix $\rho(\gamma) \in \mathrm{SL}_n(\mathbb{C})$.  A theorem of Classical Invariant Theory, due to Procesi \cite{ProcesiAdvMath76}, implies that the trace functions $\mathrm{Tr}_\gamma$ generate the algebra of functions $\mathbb{C}[\mathscr{R}_{\mathrm{SL}_n(\mathbb{C})}(\surf)]$ as an algebra.  According to the philosophy of Turaev and Witten, quantizations of the character variety should be of a 3-dimensional nature.  Indeed,  knots (or links) $K$ in the thickened surface $\surf \times (0, 1)$ represent elements of the skein algebra $\mathscr{S}^q_n(\surf)$.  The skein algebra $\mathscr{S}^q_n(\surf)$ has the advantage of being natural, but can be difficult to study~directly.  

The second quantization of the $\mathrm{SL}_n(\mathbb{C})$-character variety is the Fock-Goncharov quantum space $\mathscr{T}_n^q(\surf)$; see \cite{Fock99TheorMathPhys, Kashaev98LettMathPhys, FockENS09}.  At the classical level, Fock-Goncharov \cite{FockIHES06}     introduced a framed version $\mathscr{R}_{\mathrm{PSL}_n(\mathbb{C})}(\surf)_{\mathrm{fr}}$ (called the $\mathcal{X}$-space) of the $\mathrm{PSL}_n(\mathbb{C})$-character variety, which, roughly speaking, consists of representations $\rho : \pi_1(\surf) \to \mathrm{PSL}_n(\mathbb{C})$ equipped  with additional linear algebraic data  attached to the punctures of $\surf$.  Associated to each ideal triangulation $\idealtriang$ of the punctured surface $\surf$ is a $\idealtriang$-coordinate chart $U_\idealtriang$ for $\mathscr{R}_{\mathrm{PSL}_n(\mathbb{C})}(\surf)_{\mathrm{fr}}$ parametrized by $N$ nonzero complex coordinates $X_1, X_2, \dots, X_N$ where the integer $N$ depends only on the topology of the surface $\surf$ and the rank of the Lie group $\mathrm{SL}_n(\mathbb{C})$.  These coordinates $X_i$ are computed by taking various generalized cross-ratios of configurations of $n$-dimensional flags attached to the punctures of $\surf$.  When written in terms of these coordinates $X_i$ the trace functions $\mathrm{Tr}_\gamma = \mathrm{Tr}_\gamma(X_i^{\pm 1/n})$ associated to closed curves $\gamma$ take the form of Laurent polynomials in $n$-roots of the variables $X_i$.  At the quantum level, there are $q$-deformed versions $X_i^q$ of these coordinates, which no longer commute but $q$-commute with each other.  The quantized character variety $\mathscr{T}_n^q(\surf)$ is obtained by gluing together quantum tori $\mathscr{T}_n^q(\sigma)$, including one for each triangulation $\sigma=\idealtriang$ consisting of Laurent polynomials in the quantized Fock-Goncharov coordinates  $X_i^q$.  The quantum character variety $\mathscr{T}_n^q(\surf)$ has the advantage of being easier to work with than the skein algebra $\mathscr{S}^q_n(\surf)$, however it is less intrinsic.  

We are interested in studying $q$-deformed versions $\mathrm{Tr}_\gamma^q$ of the trace functions $\mathrm{Tr}_\gamma$, associating to a closed curve $\gamma$ a Laurent polynomial in the quantized Fock-Goncharov coordinates $X_i^q$.  Turaev and Witten's philosophy leads us from the 2-dimensional setting of curves $\gamma$ on the surface $\surf$ to the $3$-dimensional setting of knots $K$ in the thickened surface $\surf \times (0, 1)$.  In the case of $\mathrm{SL}_2(\Complex)$, such a \textit{quantum trace map} was developed in \cite{BonahonGT11} as an injective algebra homomorphism 
 \begin{equation*}
 	\mathrm{Tr}^q(\idealtriang) :
	\mathscr{S}_2^q(\surf)
	\longhookrightarrow
	\mathscr{T}_2^q(\idealtriang)
\end{equation*}
from the $\mathrm{SL}_2(\Complex)$-skein algebra to (the $\lambda$-quantum torus of) the quantized $\mathrm{SL}_2(\Complex)$-character variety.  Their construction is ``by hand'', but  is implicitly related to the theory of the quantum group $\mathrm{U}_q(\mathfrak{sl}_2)$ or, more precisely, of its Hopf dual $\mathrm{SL}_2^q$; see \cite{Kassel95}.  Developing a quantum trace map for $\mathrm{SL}_n(\mathbb{C})$ requires a more conceptual approach, making  explicit this connection between higher Teichm\"{u}ller theory and quantum group theory.  In a companion paper \cite{DouglasArxiv21}, we make significant progress in this direction.  The goal of the present work is to establish a local building block result that is essential to understanding the quantum trace map more conceptually.  

Whereas the classical trace $\mathrm{Tr}_\gamma(\rho) \in \Complex$ is a number obtained by evaluating the trace of a $\mathrm{SL}_n(\Complex)$-monodromy $\rho(\gamma)$ taken along a curve $\gamma$ in the surface $\surf$, the quantum trace $\mathrm{Tr}_K(X_i^q) \in \mathscr{T}_n^q(\idealtriang)$ is a Laurent polynomial obtained from a quantum monodromy associated to a knot $K$ in the thickened surface $\surf \times (0, 1)$.  This quantum monodromy is essentially constructed by chopping the knot $K$ into little pieces, namely the components $C$ of $K \cap (\idealtriang_k \times (0,1))$ where the $\idealtriang_k$'s are the triangles   of the ideal triangulation $\idealtriang$, 
and associating to each piece $C$ a local quantum monodromy matrix $\vec{M}_C^q \in \mathrm{M}_n(\mathscr{T}_n^q(\idealtriang_k))$.  Here, the coefficients of the matrix $\vec{M}_C^q$ lie in   a local quantum torus $\mathscr{T}_n^q(\idealtriang_k)$ associated to the triangle $\idealtriang_k$, closely associated to the quantum torus $\mathscr{T}_n^q(\idealtriang)$.  

\begin{theorem*}
\label{thm:intro1}
	When $C$ is an arc on the corner of a triangle $\lambda_k$, the Fock-Goncharov quantum matrix $\vec{M}^q_C \in \mathrm{M}_n(\mathscr{T}_n^q(\idealtriang_k))$ is a $\mathscr{T}_n^q(\idealtriang_k)$-point of the quantum special linear group $\mathrm{SL}_n^q$.  In other words, each such matrix defines an algebra homomorphism
\begin{equation*}
	\varphi(\vec{M}^q_C) : \mathrm{SL}_n^q \longrightarrow \mathscr{T}_n^q(\idealtriang_k)
\end{equation*}
by the property that the $n^2$-many generators of the algebra $\mathrm{SL}_n^q$ are sent to the corresponding $n^2$-many entries of the matrix $\vec{M}^q_C$ (see  {\upshape\S\ref{sec:quantum-SLn}}).  
\end{theorem*}

See Theorem \ref{thm:first-theorem} (and \cite[Theorem 3.10]{DouglasThesis20}). 
Our proof uses a quantum version of the technology of Fock-Goncharov called snakes.   

 The main property of the quantum trace $\mathrm{Tr}_K(X_i^q) \in \mathscr{T}_n^q(\idealtriang)$ is its invariance under isotopy of the knot $K$.   This is equivalent to invariance under a handful of local, Reidemeister-like, moves in the thickened triangulated surface.  These topological moves are independent of $n$, and can be seen as the oriented versions of the moves depicted in  \cite[Figures $15$-$19$]{BonahonGT11}.  In particular, due to their local nature, these  moves have a purely algebraic formulation, as equalities involving $n \times n$ matrices with coefficients in the quantum torus.  Our main result is essentially equivalent to the algebraic formulation of one of these moves, specifically that depicted in \cite[Figure $17$]{BonahonGT11}; see also \cite[\S $6$]{DouglasArxiv21}.  

For an independent study of these same algebraic identities underlying the isotopy invariance of the quantum trace map, in the context of integrable systems, see \cite[Theorems 2.12 and 2.14]{ChekhovArxiv20} (which, in particular, reproduces our main result), motived in part by   \cite{SchraderInvent19,SchraderArxiv17}; see also \cite{Fock06, GekhtmanSelMathNewSer09,GoncharovArxiv19}.  Our work complements that of \cite{ChekhovArxiv20} by focusing attention on a single isotopy move, and conceptualizing the associated quantum phenomenon as arising naturally  from the underlying geometry.

	\section*{Acknowledgements}
		This work would not have been possible without the constant guidance and support of my Ph.D. advisor Francis Bonahon.  We thank Sasha Shapiro for informing us about related research and for enjoyable conversations.  We are also grateful to the referee for their helpful comments.    This work was partially supported by the U.S. National Science Foundation grants DMS-1406559 and DMS-1711297.

	\section{Fock-Goncharov snakes}	
	\label{sec:fock-goncharov-snakes}

We recall some of the classical (as opposed to the quantum) geometric theory of Fock-Goncharov \cite{FockIHES06}, underlying the quantum theory discussed later on; see also \cite{Fock07, FockAdvMath07}. This section is a condensed version of \cite[Chapter 2]{DouglasThesis20}.  For other references on Fock-Goncharov coordinates and snakes, see \cite{Hollands16LettMathPhys, GaiottoAnnHenriPoincare14,MartoneMathZ19}.  When $n=2$, these coordinates date back to Thurston's shearing coordinates for Teichm{\"u}ller space \cite{Thurston97}.  

Let $ n \in \Zinteger$, $n \geq 2$, and $V=\mathbb{C}^n$ be the standard $n$-dimensional complex vector space.

	\subsection{Generic configurations of flags and Fock-Goncharov invariants}

A \textit{(complete) flag} $E$ in $V$ is a collection of linear subspaces $E^{(a)} \subset V$ indexed by $0 \leq a \leq n$, satisfying the property that each subspace $E^{(a)}$ is properly contained in the subspace $E^{(a+1)}$.  In particular, $E^{(a)}$ is $a$-dimensional, $E^{(0)} = \left\{ 0 \right\}$, and $E^{(n)} = V$.  Denote the space of flags by $\mathrm{Flag}(V)$.

	\subsubsection{Generic triples and quadruples of flags}
	\label{subsec:generic-triples-of-flags}

There are at least two notions of genericity for a configuration of flags.  We will use just one of them, the Maximum Span Property; for a complementary notion, the Minimum Intersection Property, see \cite[\S 2.10]{DouglasThesis20}.  

\begin{definition}
\label{def:maximum-span-property}
	A flag tuple $(E_1, E_2, \dots, E_k) \in \mathrm{Flag}(V)^k$ satisfies the \textit{Maximum Span Property} if either of the following equivalent conditions are satisfied:  for all $ 0 \leq a_1, a_2, \dots, a_k \leq n$,
\begin{enumerate}
	\item  the sum $E_1^{(a_1)} + E_2^{(a_2)} + \cdots + E_k^{(a_k)} = E_1^{(a_1)} \oplus E_2^{(a_2)} \oplus \cdots \oplus E_k^{(a_k)}$ is direct for all $a_1 + a_2 + \cdots + a_k = n$, thus the sum is $V$;
	\item  the dimension formula $\mathrm{dim}(E_1^{(a_1)} + E_2^{(a_2)} + \cdots + E_k^{(a_k)}) = \mathrm{min}( a_1 + a_2 + \cdots + a_k, n )$.
\end{enumerate}
In the case $n=3$, such a flag triple $(E, F, G) \in \mathrm{Flag}(V)^3$ is called a \textit{maximum span flag triple}, and in the case $n=4$, such a  flag quadruple $(E, F, G, H)\in\mathrm{Flag}(V)^4$ is called a \textit{maximum span flag quadruple}.  
\end{definition}

	\subsubsection{Discrete triangle}
	\label{subsec:the-discrete-triangle}

The \emph{discrete $n$-triangle} $\Theta_n \subset \mathbb{Z}_{\geq 0}^3$ is defined by
\begin{equation*}
\Theta_n = \left\{ (a, b, c) \in \Zinteger^3_{\geq 0} ; \quad a+b+c=n \right\}.
\end{equation*}
See Figure \ref{fig:n-discrete-triangle}.  The \textit{interior} $\mathrm{int}(\Theta_n) \subset \Theta_n$ of the discrete triangle is defined by
\begin{equation*}
\mathrm{int}(\Theta_n) = \left\{ (a, b, c) \in \Theta_n ; \quad a, b, c > 0 \right\}.
\end{equation*}

An element $ \nu \in \Theta_n $ is called a \textit{vertex} of $\Theta_n$.  Put $\Gamma(\Theta_n) = \left\{ (n, 0, 0), ( 0, n, 0 ), ( 0, 0, n ) \right\} \subset \Theta_n$.  An element $\nu \in \Gamma(\Theta_n)$ is called a \textit{corner vertex} of $ \Theta_n $.

	\subsubsection{Fock-Goncharov triangle and edge invariants}
	\label{subsec:fock-goncharov-triangle-and-edge-invariants}

For a maximum span triple of flags $(E, F, G) \in \mathrm{Flag}(V)^3$, Fock and Goncharov assigned to each interior point $(a, b, c) \in \mathrm{int}(\Theta_n)$ a \textit{triangle invariant} $\tau_{abc}(E, F, G) \in \Complex - \left\{ 0 \right\}$, defined by the formula
\begin{equation*}
	\tau_{abc}(E, F, G) = 
	\frac{e^{(a-1)} \wedge f^{ (b+1)} \wedge g^{(c)}}
	{e^{(a+1)} \wedge f^{ (b-1)} \wedge g^{(c)}}
	\frac{ e^{(a)} \wedge f^{ (b-1)} \wedge g^{(c+1)} }
	{ e^{(a)} \wedge f^{( b+1)} \wedge g^{(c-1)} }
	\frac{ e^{(a+1)} \wedge f^{ (b)} \wedge g^{(c-1)} }
	{ e^{(a-1)} \wedge f^{ (b)} \wedge g^{(c+1)} }
	\quad \in \Complex - \left\{ 0 \right\}.
\end{equation*}
Here, $e^{(a^\prime)}$, $f^{(b^\prime)}$, and $g^{(c^\prime)}$ are choices of generators for the exterior powers $\Lambda^{a^\prime}(E^{(a^\prime)}) \subseteq \Lambda^{a^\prime}( V )$,  $\Lambda^{b^\prime}(F^{(b^\prime)}) \subseteq \Lambda^{b^\prime}( V )$, and  $\Lambda^{c^\prime}(G^{(c^\prime)}) \subseteq \Lambda^{c^\prime}( V )$, respectively.  The Maximum Span Property ensures that each wedge product $ e^{ ( a^\prime ) } \wedge f^{ ( b^\prime ) } \wedge g^{ ( c^\prime ) }$ is nonzero in $ \Lambda^{a^\prime + b^\prime + c^\prime}(V) = \Lambda^n ( V ) \cong \mathbb{C}$.  Since there are the same number of terms in the numerator as the denominator, $\tau_{abc}(E,F,G)$ is independent of this choice of isomorphism $\Lambda^n(V)\cong\mathbb{C}$.  Since each generator $e^{(a^\prime)}$, $f^{(b^\prime)}$, $g^{(c^\prime)}$ appears exactly once in the numerator and denominator, $ \tau_{ abc }( E, F, G ) $ is independent of the choices of these generators.  

The six numerators and denominators appearing in the expression defining $\tau_{abc}(E, F, G)$ can be visualized as the vertices of a hexagon in $\Theta_n$ centered at $(a, b, c)$; see Figure \ref{fig:n-discrete-triangle}.

Similarly, for a maximum span quadruple of flags $(E, G, F, F^\prime) \in \mathrm{Flag}(V)^4$, Fock and Goncharov assigned to each integer $1 \leq j \leq n-1$ an \textit{edge invariant} $\epsilon_j(E, G, F, F^\prime)$ by
\begin{equation*}
\label{eq:def-of-edge-invariant}
	\epsilon_j(E, G, F, F^\prime) = 
	-
	\frac{ e^{(j)} \wedge g^{(n-j-1)} \wedge f^{ (1)} }
	{ e^{(j)} \wedge g^{(n-j-1)} \wedge f^{ \prime(1)} }
\,
	\frac{ e^{(j-1)} \wedge g^{(n-j)} \wedge f^{\prime(1)} }
	{ e^{(j-1)} \wedge g^{(n-j)} \wedge f^{ (1)} }
	\quad
	\in \Complex - \left\{ 0 \right\}.
\end{equation*}

The four numerators and denominators appearing in the expression defining $\epsilon_j(E, G, F, F^\prime)$ can be visualized as the vertices of a square, which crosses the ``common edge'' between two ``adjacent'' discrete triangles $\Theta_n(G, F, E)$ and $\Theta_n(E, F^\prime, G)$; see Figure \ref{fig:edge-invariants}.  

\begin{figure}[htb]
\centering
\includegraphics[scale=.60]{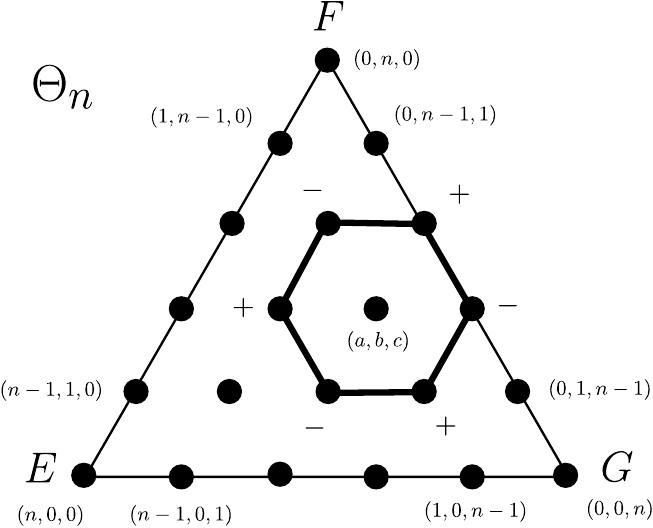}
\caption{\small Discrete triangle, and triangle invariants for a generic flag triple}
\label{fig:n-discrete-triangle}    
\end{figure} 

\begin{figure}[htb]
	\centering
	\includegraphics[scale=.55]{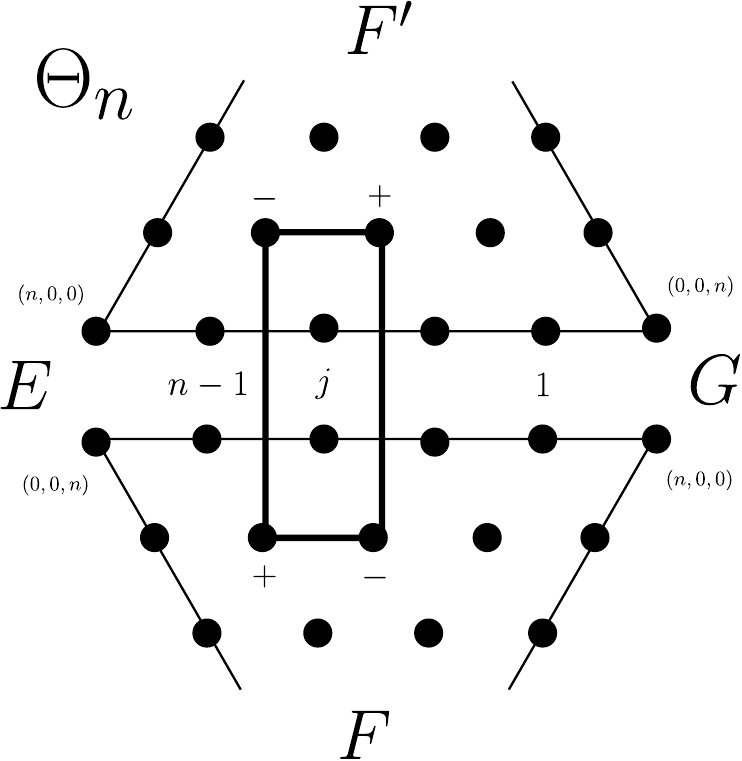}
	\caption{\small Edge invariants for a generic flag quadruple}
	\label{fig:edge-invariants}
\end{figure}

	\subsubsection{Action of $ \mathrm{PGL}(V) $ on generic flag triples}
	\label{subsec:the-action-of-PGLnC-on-the-space-of-maximum-span-flag-triples}

The action of the general linear group $\mathrm{GL}(V)$ on the vector space $V$ induces an action of the projective linear group $\mathrm{PGL}(V)$ on the space $\mathrm{Flag}(V)$ of flags.  The corresponding diagonal action of $\mathrm{PGL}(V)$ on $\mathrm{Flag}(V)^n$ restricts to generic configurations of flags.  By an elementary argument, for $n=2$ this diagonal action on generic flag pairs $(E,F)$ has a single orbit in $\mathrm{Flag}(V)^2$.  

\begin{theorem}[Fock-Goncharov]
\label{thm:Fock-and-Goncharov's-theorem}
Two maximum span flag triples $(E, F, G)$ and $(E^\prime, F^\prime, G^\prime)$ have the same triangle invariants, namely $\tau_{abc}(E, F, G) = \tau_{abc}(E^\prime, F^\prime, G^\prime) \in \mathbb{C}-\left\{0\right\}$ for every $(a, b, c) \in \mathrm{int}(\Theta_n)$, if and only if there exists $\varphi \in \mathrm{PGL}(V)$ such that $(\varphi E, \varphi F, \varphi G) = (E^\prime, F^\prime, G^\prime) \in \mathrm{Flag}(V)^3$.  

Conversely, for each choice of nonzero complex numbers $x_{abc} \in \Complex - \left\{ 0 \right\}$ assigned to the interior points $(a, b, c) \in \mathrm{int}(\Theta_n)$, there exists a maximum span flag triple $(E, F, G)$ such that $\tau_{abc}(E, F, G) = x_{abc}$ for all $(a, b, c)$.  
\end{theorem}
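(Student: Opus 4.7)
The plan is to verify the easy direction by a direct computation on wedge products, then use the previous proposition on pairs to normalize the situation, and finally exhibit an explicit parametrization of the remaining flag by the triangle invariants, which simultaneously delivers the converse statement.

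For the \emph{if} direction, note that for any $\varphi \in \mathrm{GL}(V)$ of determinant $d$, the induced map $\Lambda^{a'}(\varphi)$ sends a generator of $\Lambda^{a'}(E^{(a')})$ to a generator of $\Lambda^{a'}(\varphi E^{(a')})$, and each top wedge $e^{(a')} \wedge f^{(b')} \wedge g^{(c')} \in \Lambda^n(V)$ with $a'+b'+c' = n$ appearing in the definition of $\tau_{abc}$ is scaled by $d$ under $\varphi$. Since $\tau_{abc}$ is a product of three ratios of such top wedges, the scalar $d$ cancels, so $\tau_{abc}$ is $\mathrm{PGL}(V)$-invariant. This establishes one direction of the biconditional.

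For the \emph{only if} direction, apply the previous proposition to the pair $(E, G)$ to obtain $\varphi \in \mathrm{PGL}(V)$ with $\varphi \cdot (E, G) = (E', G')$; replacing $(E, F, G)$ by $\varphi$'s image, we may assume $E = E' = E_0(\mathscr{V})$ and $G = G' = G_0(\mathscr{V})$ for some fixed basis $\mathscr{V}$ of $V$. The residual stabilizer of $(E_0(\mathscr{V}), G_0(\mathscr{V}))$ in $\mathrm{PGL}(V)$ is then the projective diagonal torus $T \cong (\C - \{0\})^{n-1}$, and the problem reduces to showing that the map sending a maximum-span completion $F$ to the tuple $(\tau_{abc}(E_0(\mathscr{V}), F, G_0(\mathscr{V})))_{(a,b,c) \in \mathrm{int}(\Theta_n)}$ induces a bijection from $F$-completions modulo $T$ onto $(\C - \{0\})^{\mathrm{int}(\Theta_n)}$. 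A dimension count is reassuring: the source has complex dimension $\dim \mathrm{Flag}(V) - \dim T = n(n-1)/2 - (n-1) = (n-1)(n-2)/2 = |\mathrm{int}(\Theta_n)|$.

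The main obstacle is explicitly inverting this map. I would parametrize $F$ by wedge vectors $f^{(b)} = f_1 \wedge \cdots \wedge f_b$ with $f_b$ written in $\mathscr{V}$-coordinates, so that each wedge in the definition of $\tau_{abc}$ becomes a maximal minor of a matrix built from the $f_b$'s and the standard bases of $E_0(\mathscr{V})$ and $G_0(\mathscr{V})$; the Maximum Span Property guarantees non-vanishing of these minors. Proceeding inductively on $b$, the hexagonal six-term formula for $\tau_{abc}$ yields a recursion determining successive $f_b$'s up to rescaling by $T$, and the recursion is solvable for any prescribed values $x_{abc} \in \C - \{0\}$. The conceptual tool for carrying this out cleanly, and the one alluded to by the title of this section, is the Fock-Goncharov \emph{snake}: a snake in $\Theta_n$ determines a projective basis of $V^*$ anchored at a corner vertex, and the transition matrices between snake-bases at different corners have entries that are explicit monomials in the $\tau_{abc}$. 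Inverting these transition matrices delivers both uniqueness (the \emph{only if} direction) and existence (the converse statement) in a uniform way.
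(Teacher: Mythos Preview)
Your proposal is correct and follows the same approach as the paper, which does not give a self-contained proof but refers to \cite{FockIHES06} and explicitly flags that the argument goes through snakes (with Proposition~\ref{PROP:ELEMENTARY-SNAKE-MOVE-MATRICES} later identified as the main ingredient). Two small technical corrections: snakes live in $\Theta_{n-1}$, not $\Theta_n$, and the snake-move change-of-basis matrices are not monomial matrices in the $\tau_{abc}$---they are the shearing matrices $\vec{S}^{\mathrm{left}}_k(X_{abc})$, which are unipotent-times-diagonal and have a single off-diagonal $1$.
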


\begin{proof}
	See \cite[\S9]{FockIHES06}.  The proof uses the concept of snakes, due to Fock and Goncharov.  For a sketch of the proof and some examples, see \cite[\S 2.19]{DouglasThesis20}.  
\end{proof}

	\subsection{Snakes and projective bases}

	\subsubsection{Snakes}
	\label{subsec:notation-and-definitions}

Snakes are combinatorial objects associated to the $ ( n-1 ) $-discrete triangle $ \Theta_{ n-1 } $ (\S \ref{subsec:the-discrete-triangle}).  In contrast to $\Theta_n$, we denote the coordinates of a vertex $\nu \in \Theta_{n-1}$ by $\nu=(\alpha, \beta, \gamma)$ corresponding to solutions $\alpha+\beta+\gamma=n-1$ for $\alpha, \beta, \gamma \in \Zinteger_{\geq 0}$.  

\begin{definition}
A \textit{snake-head} $ \eta $ is a fixed corner vertex of the $ ( n-1 ) $-discrete triangle
\begin{equation*}
	\eta \quad\in \left\{ (n-1,0,0), (0, n-1, 0), (0, 0, n-1) \right\} = \Gamma(\Theta_{ n-1 } ) \quad\subseteq \Theta_{ n-1 }.
\end{equation*}
\end{definition}

\begin{remark}
\label{rem:snake-heads}
In a moment, we will define a snake.  The most general definition involves choosing a snake-head $\eta \in \Gamma(\Theta_{n-1})$.  For simplicity, we define a snake only in the case $\eta=(n-1, 0, 0)$.  The definition for other choices of snake-heads follows by triangular symmetry.  We will usually take $\eta = (n-1,0,0)$ and will alert the reader if otherwise.  
\end{remark}

\begin{definition}
\label{def:snakes}
	A \textit{left $ n $-snake} (for the snake-head $\eta=(n-1,0,0)\in \Gamma(\Theta_{n-1})$), or just \textit{snake}, $\sigma$ is an ordered list $ \sigma = ( \sigma_1, \sigma_2, \dots, \sigma_n ) \in (\Theta_{ n-1 })^n$ of $n$-many vertices $\sigma_k=(\alpha_k, \beta_k, \gamma_k)$ in the discrete triangle $ \Theta_{ n-1 } $, called \textit{snake-vertices}, satisfying
		\begin{equation*}
			\alpha_k = k-1, \quad\quad
			\beta_k \geq \beta_{k+1}, \quad\quad
			\gamma_k \geq \gamma_{k+1}
			\quad\quad  \left(k=1, 2, \dots, n\right).
		\end{equation*}
\end{definition}

See Figure \ref{fig:snakes-theta-n-theta-n-1}.  On the right hand side, we show a snake $\sigma=(\sigma_k)_k$  in the case $ n=5 $ (where we have taken some artistic license to assist the reader in locating the snake's head and tail; in \S \ref{sec:snake-move-algebras}, we will find it useful to split the snake in half down its length, as illustrated in Figure \ref{fig:quantum-snake-sweep-example}).  On the left hand side, we show how the snake-vertices $ \sigma_k \in \Theta_{ n-1 } $ can be pictured as small upward-facing triangles $\Delta$ in the $ n $-discrete triangle~$ \Theta_n $.

\begin{figure}[htb]
\centering
\includegraphics[width=.92\textwidth]{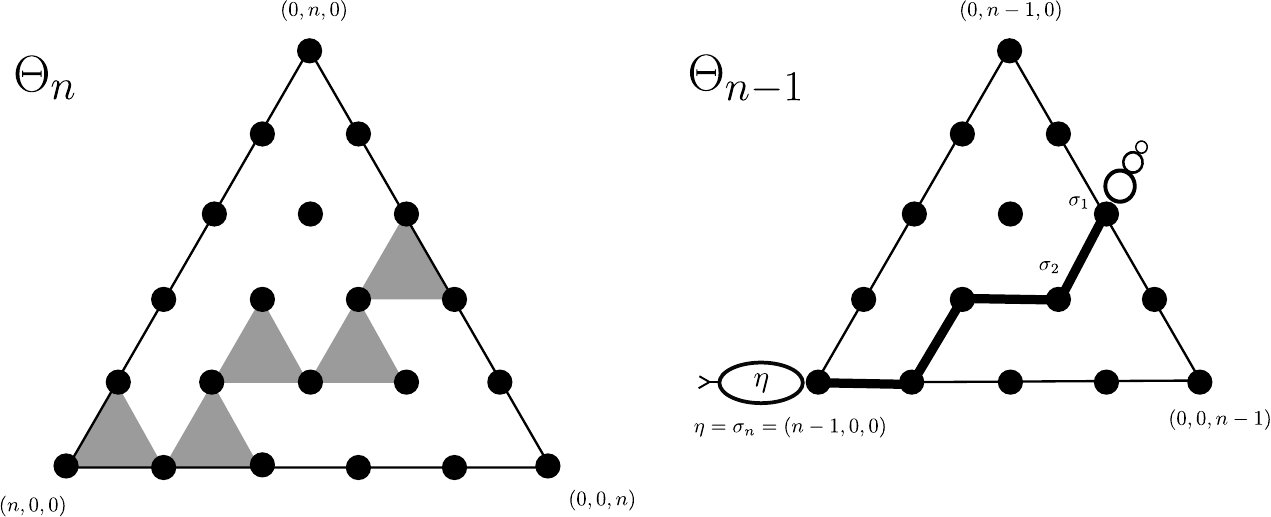}
\caption{\small Snake}     
\label{fig:snakes-theta-n-theta-n-1}
\end{figure}

	\subsubsection{Line decomposition of $V^*$ associated to a generic triple of flags and a snake}
	\label{subsec:line-decomposition-of-Cndual-associated-to-a-triple-of-flags-and-a-snake}

Let $V^*=\left\{\text{linear map }V \to \mathbb{C}\right\}$.  For a subspace $W \subset V$, define $W^\perp = \{ u\in V^*;u(w)=0\text{ for all }w\in W\}$.  A \textit{line} in a vector space $V^\prime$ is a $1$-dimensional subspace.

Fix a maximum span triple $(E, F, G) \in \mathrm{Flag}(V)^3$.  For any vertex $\nu = (\alpha, \beta, \gamma) \in \Theta_{n-1}$,
\begin{equation*}
	\mathrm{dim}\left( ( E^{(\alpha)} \oplus F^{(\beta)} \oplus G^{(\gamma)} )^\perp \right)
	= 1
\end{equation*}
by the Maximum Span Property, since $\alpha+\beta+\gamma = n-1$.  Consequently,  the subspace
\begin{equation*}
	L_{(\alpha, \beta, \gamma)} := \left( E^{(\alpha)} \oplus F^{(\beta)} \oplus G^{(\gamma)} \right)^\perp
	\quad  \subseteq  V^*
\end{equation*}
is a line  for all vertices $( \alpha, \beta, \gamma ) \in \Theta_{n-1}$.

If in addition we are given a snake $\sigma=(\sigma_k)_k$, then we may consider the $n$-many lines 
\begin{equation*}
	L_{\sigma_k} = L_{ (\alpha_k, \beta_k, \gamma_k) } 
	\quad \subseteq V^*
	\quad\quad  \left( k=1, \dots, n \right)
\end{equation*} 
where $\sigma_k = (\alpha_k, \beta_k, \gamma_k) \in \Theta_{n-1}$.  By genericity, we obtain a direct sum line decomposition
\begin{equation*}
	V^* = \bigoplus_{k=1}^n L_{\sigma_k}.
\end{equation*}

	\subsubsection{Projective basis of $V^*$ associated to a generic triple of flags and a snake}
	\label{subsec:projective-basis-of-Cndual-associated-to-a-triple-of-flags-and-a-snake}
	
Given a generic flag triple $(E,F,G)$ and a snake $\sigma$, Fock-Goncharov construct in addition a projective basis $[\mathscr{U}]$ of $V^*$ adapted to the associated line decomposition.  Here, $\mathscr{U}=\left\{u_1, u_2, \dots, u_n\right\}$ is a linear basis of $V^*$ such that $u_k \in L_{\sigma_k}$ for all $k$, and the \textit{projective basis} $[\mathscr{U}]$ is the equivalence class of $\mathscr{U}$ under the relation $\left\{u_1, u_2, \dots, u_n\right\} \sim \left\{\lambda u_1, \lambda u_2, \dots, \lambda u_n\right\}$ for all $\lambda \neq 0$.  

\begin{figure}[htb]
	\centering
	\includegraphics[scale=.50]{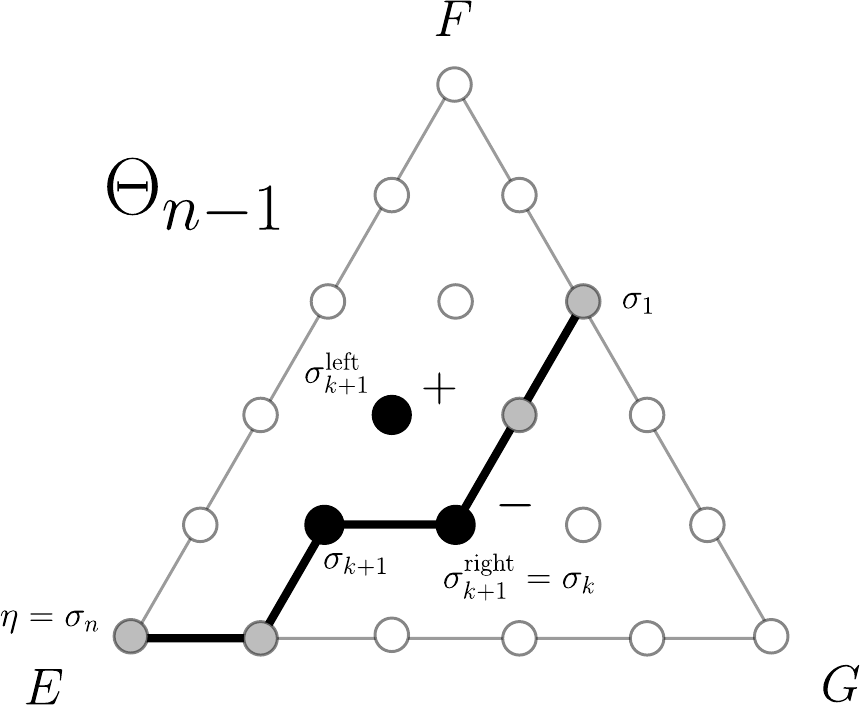}
	\caption{\small Three coplanar lines involved in the definition of a projective basis.  For the meaning of the $+$ and $-$ signs, see Definition \ref{def:defining-the-projective-basis-by-induction}. }
	\label{fig:defining-projective-basis}
\end{figure}

Put $\sigma_k = (\alpha_k, \beta_k, \gamma_k)$.  We begin by choosing a covector $u_n$ in the line $L_{\sigma_n}  \subseteq V^*$, called a \textit{normalization}.  Having defined covectors $u_n, u_{n-1}, \dots, u_{k+1}$, we will define a~covector 
\begin{equation*}
	u_k \quad \in L_{\sigma_k} = \left(E^{(\alpha_k)} \oplus F^{(\beta_k)} \oplus G^{(\gamma_k)}\right)^\perp 
	\quad \subseteq V^*.
\end{equation*}  
By the definition of snakes, we see that given $\sigma_{k+1}$ there are only two possibilities for $\sigma_k$, denoted $\sigma_{k+1}^\mathrm{left}$ and $\sigma_{k+1}^\mathrm{right}$:
\begin{align*}
\label{eq:def-of-left-projective-basis1}
	\sigma_{k+1}^\mathrm{left} &= (\alpha^\mathrm{left}_{k+1}, \beta^\mathrm{left}_{k+1}, \gamma^\mathrm{left}_{k+1}), 
	&  \alpha^\mathrm{left}_{k+1} &= k-1,
	&  \beta^\mathrm{left}_{k+1} &= \beta_{k+1} + 1,
	&  \gamma^\mathrm{left}_{k+1} &= \gamma_{k+1};
\\
	\sigma_{k+1}^\mathrm{right} &= (\alpha^\mathrm{right}_{k+1}, \beta^\mathrm{right}_{k+1}, \gamma^\mathrm{right}_{k+1}), 
	&  \alpha^\mathrm{right}_{k+1} &= k-1,
	&  \beta^\mathrm{right}_{k+1} &= \beta_{k+1},
	&  \gamma^\mathrm{right}_{k+1} &= \gamma_{k+1} + 1.
\end{align*}
See Figure \ref{fig:defining-projective-basis}, where in the example $\sigma_k = \sigma_{k+1}^\mathrm{right}$. Thus, the lines $L_{\sigma^\mathrm{left}_{k+1}}$ and $L_{\sigma^\mathrm{right}_{k+1}}$ can be~written

\begin{equation*}
\begin{split}
	L_{\sigma_{k+1}^\mathrm{left}} 
	&= \left( E^{(k-1)} \oplus F^{( \beta_{k+1} + 1 )}
	\oplus G^{( \gamma_{k+1} )} \right)^\perp 
	\quad \subseteq V^*;
\\	L_{\sigma_{k+1}^\mathrm{right}} 
	&= \left( E^{(k-1)} \oplus F^{( \beta_{k+1} )}
	\oplus G^{( \gamma_{k+1} +1 )} \right)^\perp 
	\quad \subseteq V^*.
\end{split}
\end{equation*}

It follows by the Maximum Span Property that the three lines $L_{\sigma_{k+1}}$, $L_{\sigma_{k+1}^\mathrm{left}}$, $L_{\sigma_{k+1}^\mathrm{right}}$ in $ V^* $ are distinct and coplanar.  Specifically, they lie in the plane
\begin{equation*}
	\left(
	E^{(k-1)} \oplus F^{(\beta_{k+1})} \oplus G^{(\gamma_{k+1})}
	\right)^\perp 
	\quad  \subseteq V^*
\end{equation*}
which is indeed 2-dimensional, since $ (k-1)+ \beta_{k+1} + \gamma_{k+1} = (n-1)-1$, as $\alpha_{k+1}=k$.  Thus, if $u_{k+1}$ is a nonzero covector in the line $L_{\sigma_{k+1}}$, then there exist unique nonzero covectors $u_{k+1}^\mathrm{left}$ and $u_{k+1}^\mathrm{right}$ in the lines $L_{\sigma_{k+1}^\mathrm{left}}$ and $L_{\sigma_{k+1}^\mathrm{right}}$, respectively, such that
\begin{equation*}
	u_{k+1} + u_{k+1}^\mathrm{left} + u_{k+1}^\mathrm{right} = 0 
	\quad \in V^*.
\end{equation*}

\begin{definition}
\label{def:defining-the-projective-basis-by-induction}

Having chosen a normalization $u_n \in L_{\sigma_n} = L_{(n-1,0,0)}$ and having inductively defined $u_{k^\prime} \in L_{\sigma_{k^\prime}}$ for $k^\prime=n,n-1,\dots,k+1$, define $u_k \in L_{\sigma_k}$ by the~formula:
\begin{enumerate}
	\item  if $\sigma_k = \sigma_{k+1}^\mathrm{left}$, put $u_k = + u_{k+1}^\mathrm{left} \in L_{\sigma_{k+1}^\mathrm{left}}$;
	\item  if $\sigma_k = \sigma_{k+1}^\mathrm{right} $, put $u_k = - u_{k+1}^\mathrm{right} \in L_{\sigma_{k+1}^\mathrm{right}}$.
\end{enumerate}
See Figure \ref{fig:defining-projective-basis}, which falls into case (2).  Note if the initial normalization $u_n$ is replaced by $\lambda u_n$ for some scalar $\lambda \neq 0$, then $u_k$ is replaced by $\lambda u_k$ for all $1 \leq k \leq n$.  Thus this process produces a projective basis $[\mathscr{U}] = [\left\{ u_1, u_2, \dots, u_n \right\}]$ of $V^*$, as desired.  We call $\mathscr{U}=\left\{ u_1, u_2, \dots, u_n \right\}$ the \textit{normalized projective basis} for $V^*$ depending on the normalization $u_n \in L_{\sigma_n}$.
\end{definition}

	\subsection{Snake moves}
	\label{subsec:elementary-snake-moves}

	\subsubsection{Elementary matrices}
	\label{sec:theorem-3}

	Let $\mathscr{A}$ be a commutative algebra with $1$, such as $\mathscr{A} = \Complex$.  Let $X^{1/n}, Z^{1/n} \in \mathscr{A}$, and put $X=(X^{1/n})^n$ and $Z=(Z^{1/n})^n$.  Let $\mathrm{M}_n(\mathscr{A})$ (resp. $\mathrm{SL}_n(\mathscr{A})$) denote the ring of $n \times n$ matrices (resp. having determinant equal to 1) over $\mathscr{A}$ (see also \S \ref{sssec:matrix-algebras}).  

For $k = 1, 2, \dots, n-1$ define the \textit{$k$-th left-elementary matrix} $\vec{S}^\mathrm{left}_k (X) \in \mathrm{SL}_n(\mathscr{A})$ by
\begin{equation*}
\label{eq:upper-triangular-matrix}
	\vec{S}^\mathrm{left}_k(X) = X^{-(k-1)/n}
	\left(\begin{smallmatrix}
		X&&&&&&&\\
		&\ddots&&&&&&\\
		&&X&&&&&\\
		&&&1&1&&&\\
		&&&&1&&&\\
		&&&&&1&&\\
		&&&&&&\ddots&\\
		&&&&&&&1
	\end{smallmatrix}\right)
	\quad
	\in  \mathrm{SL}_n(\mathscr{A})
	\quad\quad  \left(X \text{ appears } k-1 \text{ times}\right),
\end{equation*}
and define the \textit{$k$-th right-elementary matrix} $\vec{S}^\mathrm{right}_k (X) \in \mathrm{SL}_n(\mathscr{A})$ by
\begin{equation*}
\label{eq:lower-triang-matrix}
	\vec{S}^\mathrm{right}_k(X) =
	X^{+(k-1)/n}
	\left(\begin{smallmatrix}
		1&&&&&&&\\
		&\ddots&&&&&&\\
		&&1&&&&&\\
		&&&1&&&\\
		&&&1&1&&&\\
		&&&&&X^{-1}&&\\
		&&&&&&\ddots&\\
		&&&&&&&X^{-1}
	\end{smallmatrix}\right)
	\quad	
	 \in \mathrm{SL}_n(\mathscr{A})
	\quad\quad  \left(X \text{ appears } k-1 \text{ times}\right).  
\end{equation*}
Note that $\vec{S}^\mathrm{left}_1(X)$ and $\vec{S}^\mathrm{right}_1(X)$ do not, in fact, involve the variable $X$, and so we will denote these matrices simply by $\vec{S}^\mathrm{left}_1$ and $\vec{S}^\mathrm{right}_1$, respectively.  

For $j = 1, 2, \dots, n-1$ define the \textit{$j$-th edge-elementary matrix} $\vec{S}^\mathrm{edge}_j(Z) \in \mathrm{SL}_n(\mathscr{A})$ by 
\begin{equation*}
\label{eq:shearing-matrix}
	\vec{S}^\mathrm{edge}_j(Z) = Z^{-j/n}
	\left(\begin{smallmatrix}
		Z&&&&&&&\\
		&Z&&&&&&\\
		&&\ddots&&&&&\\
		&&&Z&&&&\\
		&&&&1&&&\\
		&&&&&1&&\\
		&&&&&&\ddots&\\
		&&&&&&&1
	\end{smallmatrix}\right)
	\quad  \in \mathrm{SL}_n(\mathscr{A})
	\quad\quad  \left(Z \text{ appears } j \text{ times}\right).
\end{equation*} 

Lastly, define the \textit{clockwise U-turn matrix} $\vec{U} $ in $ \mathrm{SL}_n(\mathbb{C})$ by 
\begin{equation*}
\label{eq:U-turn-matrix}
	\vec{U}
	=  \left(\begin{smallmatrix}
		&&&&\\
		&&&&(-1)^{n-1}\\
		&&&\reflectbox{$\ddots$}&\\
		&&+1&&\\
		&-1&&&\\
		+1&&&&
	\end{smallmatrix}\right)
	\quad  \in  \mathrm{SL}_n(\mathbb{C}).
\end{equation*}

	\subsubsection{Adjacent snake pairs}
	\label{sssec:adjacent-snake-pairs}

\begin{definition}
\label{def:adjacent-snakes}
	We say that an ordered pair $(\sigma, \sigma^\prime)$ of snakes $\sigma$ and $\sigma^\prime$ forms an \textit{adjacent pair of snakes} if the pair $(\sigma, \sigma^\prime)$ satisfies either of the following conditions:
\begin{enumerate}
	\item  for some $ 2 \leq k \leq n-1 $, 
\begin{enumerate}
	\item  $ \sigma_{j} = \sigma^\prime_{j} $
	\quad\quad
	\quad\quad
	\quad\quad
	\quad\quad
	\quad\quad
	\quad\quad
	 $( 1 \leq j \leq k-1 $, \quad $ k+1 \leq j \leq n)$,
	\item  $\sigma_k = \sigma_{ k+1 }^\mathrm{right} \,\, (=\sigma_{k+1}^{\prime \mathrm{right}}), $ \quad and \quad
  $ \sigma^\prime_k = \sigma_{ k+1 }^\mathrm{left} \,\, (=\sigma_{k+1}^{\prime \mathrm{left}}) $,
\end{enumerate} 
in which case $ ( \sigma, \sigma^\prime ) $ is called an adjacent pair of \textit{diamond-type}, see Figure \ref{fig:diamond-move};
\item  
	\begin{enumerate}
	\item  $ \sigma_{j} = \sigma^\prime_{j}$ 
	\quad\quad
	\quad\quad
	\quad\quad
	\quad\quad
	\quad\quad
	\quad\quad
	$(2 \leq j \leq n)$,
	\item  $\sigma_1 = \sigma_{ 2 }^\mathrm{right}  \,\, (=\sigma_2^{\prime \mathrm{right}})$, \quad and \quad
  $ \sigma^\prime_1 = \sigma_{ 2 }^\mathrm{left} \,\, (=\sigma_2^{\prime \mathrm{left}})$,
\end{enumerate} 
in which case $ ( \sigma, \sigma^\prime ) $ is called an adjacent pair of \textit{tail-type}, see Figure \ref{fig:tail-move}.
\end{enumerate}
\end{definition}

	\subsubsection{Diamond and tail moves}
	\label{sssec:diamond-and-tail-moves}

Let $ ( \sigma, \sigma^\prime ) $ be an adjacent pair of snakes of diamond-type, as shown in Figure \ref{fig:diamond-move}.  

Consider the snake-vertices $ \sigma_{ k+1 }$ $(= \sigma^\prime_{ k+1 }) $, $\sigma_k$, $\sigma^\prime_k$, and $\sigma_{k-1}$ $(= \sigma^\prime_{k-1})$.  One checks~that 
\begin{equation*}
\label{eq:coordinates-of-the-downward-facing-triangle}
	\alpha_k = \alpha^\prime_k = k-1,\quad\quad
	\beta_k^\prime = \beta_{k-1}
	=  \beta_{k+1} + 1, \quad\quad
	\gamma_{k} = \gamma_{k-1}
	=  \gamma_{k+1} + 1. 
\end{equation*}
Taken together, these three coordinates form a vertex
\begin{equation*}
	(a, b, c) \quad = (k-1, \beta_{k+1} + 1, \gamma_{k+1} + 1) \quad \in \mathrm{int}(\Theta_n)
\end{equation*}
in the interior of the $n$-discrete triangle $\Theta_n$ (not $\Theta_{n-1}$), since $(k-1)+(\beta_{k+1}+1)+(\gamma_{k+1}+1)=(\alpha_{k+1}+\beta_{k+1}+\gamma_{k+1})+1=n$.  The coordinates of this internal vertex $(a,b,c)$ can also be thought of as delineating the boundary of a small downward-facing triangle $\nabla$ in the discrete triangle $\Theta_{ n-1 }$, whose three vertices are $\sigma_k, \sigma^\prime_k$, $\sigma_{k-1}$ (Figure \ref{fig:diamond-move}).  Put $X_{abc} = \tau_{abc}(E, F, G) \in \Complex - \left\{ 0 \right\}$, namely $X_{abc}$ is the Fock-Goncharov triangle invariant (\S \ref{subsec:fock-goncharov-triangle-and-edge-invariants}) associated to the generic flag triple $(E, F, G)$ and the internal vertex $(a,b,c) \in \mathrm{int}(\Theta_n)$.  

\begin{figure}[htb]
	\centering
	\includegraphics[scale=.46]{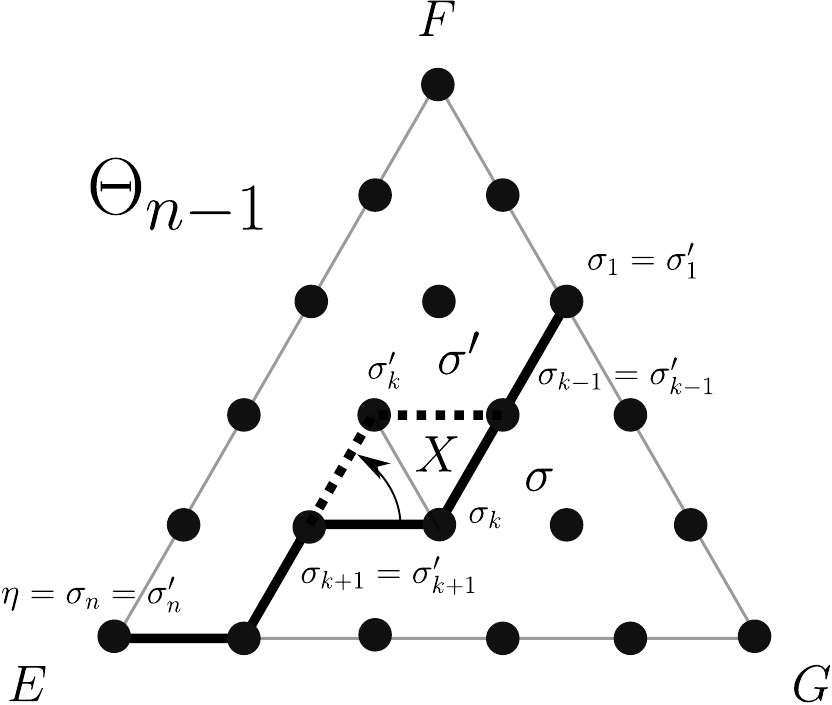}
	\caption{\small Diamond move}
	\label{fig:diamond-move}
\end{figure}

\begin{figure}[htb]
	\centering
	\includegraphics[scale=.46]{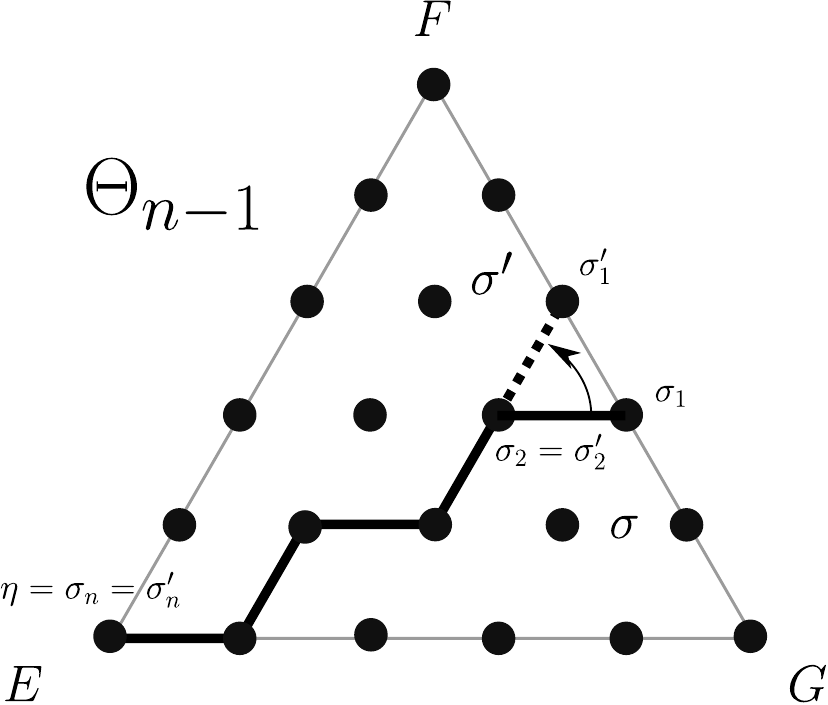}
	\caption{\small Tail move}
	\label{fig:tail-move}
\end{figure}

The proposition below is the main ingredient going into the proof of Theorem \ref{thm:Fock-and-Goncharov's-theorem}.  First, we set our conventions for change of basis matrices for bases of $V^*$.  

Given any basis $ \mathscr{U} = \left\{ u_1, u_2, \dots, u_n \right\} $ of $ V^* $, and given a covector $ u $ in $ V^* $, the \textit{coordinate covector} $ [u]_\mathscr{U} $ of the covector $ u $ with respect to the basis $ \mathscr{U} $ is the unique row matrix $ [u]_\mathscr{U} = \left(\begin{smallmatrix} y_1 & y_2 & \cdots & y_n \end{smallmatrix}\right) $ in $ \mathrm{M}_{1, n}(\Complex) $ such that $ u = \sum_{ i=1 }^n y_i u_i$.  If $ \mathscr{U}^\prime = \left\{ u^\prime_1, u^\prime_2, \dots, u^\prime_n \right\} $ is another basis for $ V^* $, then the \textit{change of basis matrix} $ \vec{B}_{\mathscr{U} \rightarrow \mathscr{U}^\prime} $ going from the basis $ \mathscr{U} $ to the basis $ \mathscr{U}^\prime $ is the unique invertible matrix in $ \mathrm{GL}_n(\Complex) \subset \mathrm{M}_n(\Complex)$ satisfying
\begin{equation*}
	[ u ]_{ \mathscr{U} } \vec{B}_{\mathscr{U} \rightarrow \mathscr{U}^\prime} = [ u ]_{\mathscr{U}^\prime}
	\quad
	\in \mathrm{M}_{1, n}(\Complex)
	\quad\quad\quad
	\left( u \in V^* \right).
\end{equation*}
Change of basis matrices satisfy the property
\begin{equation*}
\label{eq:composition-of-covector-chnage-of-bases}
	\vec{B}_{\mathscr{U} \rightarrow \mathscr{U}^{\prime \prime }}
	=  
	\vec{B}_{\mathscr{U} \rightarrow \mathscr{U}^{\prime }}
	\vec{B}_{\mathscr{U}^\prime \rightarrow \mathscr{U}^{\prime \prime }}
	\quad
	\in  \mathrm{GL}_n(\Complex)
	\quad\quad  \left( \mathscr{U}, \mathscr{U}^\prime, \mathscr{U}^{\prime \prime} \text{ bases for } V^* \right).
\end{equation*}

\begin{proposition}[Fock-Goncharov]
\label{PROP:ELEMENTARY-SNAKE-MOVE-MATRICES}
	Let $ ( E, F, G ) $ be a maximum span flag triple, $ ( \sigma, \sigma^\prime ) $ an adjacent pair of snakes, and $\mathscr{U}$, $\mathscr{U}^\prime$ the corresponding normalized projective bases of $V^*$, satisfying the compatibility condition $u_n = u^\prime_n \in L_{\sigma_n}=L_\eta$.  

If $ ( \sigma, \sigma^\prime ) $ is of diamond-type, then the change of basis matrix $\vec{B}_{\mathscr{U} \to \mathscr{U}^\prime} \in \mathrm{GL}_n(\Complex)$  is
\begin{equation*}  
\label{eq:diamond-move-matrix}
	\vec{B}_{\mathscr{U} \to \mathscr{U}^\prime} = 
	X_{abc}^{+(k-1)/n} \, \vec{S}_k^\mathrm{left}(X_{abc})
	\quad \in \mathrm{GL}_n(\Complex)  \quad\quad  (\text{see } \S \ref{sec:theorem-3}).
\end{equation*}
We say this case expresses a diamond move from the snake $\sigma$ to the adjacent snake $\sigma^\prime$.  

If $ ( \sigma, \sigma^\prime ) $ is of tail-type, then the change of basis matrix $\vec{B}_{\mathscr{U} \to \mathscr{U}^\prime}$ equals
\begin{equation*}
\label{eq:tail-move-matrix}
	\vec{B}_{\mathscr{U} \to \mathscr{U}^\prime} = 
	\vec{S}_1^\mathrm{left}
	\quad  \in  \mathrm{SL}_n(\Complex) \quad\quad  (\text{see } \S \ref{sec:theorem-3}).
\end{equation*}
We say this case expresses a tail move from the snake $\sigma$ to the adjacent snake $\sigma^\prime$.
\end{proposition}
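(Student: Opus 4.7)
The plan is to compute the change-of-basis matrix $\mathbf{B} := \mathbf{B}_{\mathscr{U} \to \mathscr{U}^\prime}$ row-by-row, where row $i$ of $\mathbf{B}$ records the coordinates of $u_i$ in the basis $\mathscr{U}^\prime$. We treat the diamond case first and then specialize to the tail case.

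Since the snakes $\sigma, \sigma^\prime$ agree at positions $j \geq k+1$ and the bases share the normalization $u_n = u_n^\prime$, the inductive construction of Definition \ref{def:defining-the-projective-basis-by-induction} immediately gives $u_j = u_j^\prime$ for all $j \geq k+1$, yielding the identity rows in the bottom-right block of $\mathbf{B}$. For row $k$, we apply the coplanarity relation $u_{k+1} + u_{k+1}^\mathrm{left} + u_{k+1}^\mathrm{right} = 0$ together with the definitional identities $u_k = -u_{k+1}^\mathrm{right}$ and $u_k^\prime = +u_{k+1}^\mathrm{left}$ to deduce $u_k - u_k^\prime = u_{k+1}$; since $u_{k+1} = u_{k+1}^\prime$, this becomes $u_k = u_k^\prime + u_{k+1}^\prime$, placing $(0, \ldots, 0, 1, 1, 0, \ldots, 0)$ in row $k$ of $\mathbf{B}$ with ones in positions $k$ and $k+1$.

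For rows $j \leq k-1$, we first reduce to computing a single scalar ratio $c := u_{k-1}^\prime/u_{k-1}$. Since both snakes agree at positions $j \leq k-1$, the lines $L_{\sigma_j} = L_{\sigma_j^\prime}$, the 2-planes used in the coplanarity steps, and the left/right signs are identical for $\sigma$ and $\sigma^\prime$; using that the coplanarity assignment $u \mapsto (u^\mathrm{left}, u^\mathrm{right})$ is $\C$-linear in $u$, we conclude $u_j^\prime = c u_j$ for every $j \leq k-1$. The heart of the proof is then to show $c = X_{abc}^{-1}$. To do so, we identify $L_{(\alpha, \beta, \gamma)}$ with the line generated by the covector $\phi_{(\alpha, \beta, \gamma)} : v \mapsto \omega(e^{(\alpha)} \wedge f^{(\beta)} \wedge g^{(\gamma)} \wedge v)$ for a fixed volume form $\omega \in \Lambda^n(V)^*$. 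Choosing a basis of the 2-dimensional quotient $V/(E^{(k-1)} \oplus F^{(\beta_{k+1})} \oplus G^{(\gamma_{k+1})})$, we solve the coplanarity at level $k+1$ to express $u_k, u_k^\prime$ as explicit scalar multiples of $\phi_{\sigma_k}, \phi_{\sigma_k^\prime}$, where the coefficients are ratios of $n$-fold wedge brackets $[\alpha, \beta, \gamma] \in \Lambda^n(V) \cong \C$. We then repeat the procedure at level $k$---in two \emph{different} 2-planes, since $\sigma_k \neq \sigma_k^\prime$---to write $u_{k-1}, u_{k-1}^\prime$ as scalar multiples of the common covector $\phi_{\sigma_{k-1}}$. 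Taking the ratio and simplifying, the volume-form signs cancel and the six surviving wedge brackets align exactly with the six vertices of the hexagon defining $\tau_{abc}(E, F, G) = X_{abc}$, producing $c = X_{abc}^{-1}$ and hence $u_j = X_{abc} u_j^\prime$ for all $j \leq k-1$.

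Assembling the three blocks gives $\mathbf{B} = \mathrm{diag}(X_{abc}, \ldots, X_{abc}, 1, \ldots, 1) + E_{k, k+1}$ (with $k-1$ copies of $X_{abc}$), which equals $X_{abc}^{+(k-1)/n} \vec{S}^\mathrm{left}_k(X_{abc})$ by the explicit formula for $\vec{S}^\mathrm{left}_k$. The tail case is the same argument specialized to $k=1$: the coplanarity at level $2$ produces $u_1 = u_1^\prime + u_2^\prime$, and since there are no levels $j < 1$ there is no diagonal block to carry a triangle-invariant factor, so $\mathbf{B} = \vec{S}^\mathrm{left}_1$. The main technical burden of the proof is the wedge-product bookkeeping in the ratio computation: we must coordinate the two distinct 2-planes appearing at level $k$, propagate scaling factors through both coplanarity solves, and verify that the antisymmetric signs from $\omega$ cancel cleanly so that exactly the six hexagon brackets survive.
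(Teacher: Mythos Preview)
Your proof is correct and follows the standard Fock--Goncharov argument. The paper itself does not give a proof here, deferring instead to \cite[\S 9]{FockIHES06} and \cite[\S 2.18]{DouglasThesis20}; your write-up is essentially what one finds in those references. One remark: your computation of the scalar $c = X_{abc}^{-1}$ via wedge brackets is exactly the content of the paper's later Proposition~\ref{prop:triangle-invariants-as-shears} (triangle invariants as shears), which packages the same six-bracket ratio as a composition of three coplanarity shears around the downward triangle $\nabla$; invoking that proposition would let you skip the explicit bookkeeping you flag as the ``main technical burden,'' at the cost of a forward reference.
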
    
\begin{proof}
	See \cite[\S 9]{FockIHES06}.  We also provide a proof in \cite[\S 2.18]{DouglasThesis20}. 
\end{proof}

	\subsubsection{Right snakes and right snake moves}
	\label{sec:right-matrices}

Our definition of a (left) snake in \S \ref{subsec:notation-and-definitions} took the snake-head $\eta = \sigma_n$ to be the $n$-th snake-vertex.  There is another possibility, where $\eta = \sigma_1$.

\begin{definition}
	A \textit{right $ n $-snake} $\sigma$ (for the snake-head $\eta=(n-1,0,0)\in \Gamma(\Theta_{n-1})$)  is an ordered list $ \sigma = ( \sigma_1, \sigma_2, \dots, \sigma_n ) \in (\Theta_{ n-1 })^n$ of $n$-many vertices $\sigma_k=(\alpha_k, \beta_k, \gamma_k)$, satisfying
		\begin{equation*}
			\alpha_k = n-k, \quad\quad
			\beta_k \geq \beta_{k-1}, \quad\quad
			\gamma_k \geq \gamma_{k-1}
			\quad\quad  \left(k=1, 2, \dots, n\right).
		\end{equation*}
		Right snakes for other snake-heads $\eta \in \Gamma(\Theta_{n-1})$ are similarly defined by triangular symmetry. 
\end{definition}

To adjust for using right snakes, the definitions of \S \ref{subsec:projective-basis-of-Cndual-associated-to-a-triple-of-flags-and-a-snake}, \ref{sssec:adjacent-snake-pairs}, \ref{sssec:diamond-and-tail-moves} need to be~modified.  

Given $\sigma_{k-1}$, there are two possibilities for $\sigma_k$:
\begin{align*}
\label{eq:def-of-left-projective-basis1}
	\sigma_{k-1}^\mathrm{left} &= (\alpha^\mathrm{left}_{k-1}, \beta^\mathrm{left}_{k-1}, \gamma^\mathrm{left}_{k-1}), 
	&  \alpha^\mathrm{left}_{k-1} &= n-k,
	&  \beta^\mathrm{left}_{k-1} &= \beta_{k-1} + 1,
	&  \gamma^\mathrm{left}_{k-1} &= \gamma_{k-1};
\\
	\sigma_{k-1}^\mathrm{right} &= (\alpha^\mathrm{right}_{k-1}, \beta^\mathrm{right}_{k-1}, \gamma^\mathrm{right}_{k-1}), 
	&  \alpha^\mathrm{right}_{k-1} &= n-k,
	&  \beta^\mathrm{right}_{k-1} &= \beta_{k-1},
	&  \gamma^\mathrm{right}_{k-1} &= \gamma_{k-1} + 1.
\end{align*} 
The algorithm defining the (ordered) projective basis $[ \mathscr{U} ] = [ \left\{ u_1, u_2, \dots, u_n \right\}]$ becomes:
\begin{enumerate}
	\item  if $\sigma_k = \sigma_{k-1}^\mathrm{left}$, put $u_k = - u_{k-1}^\mathrm{left} \in L_{\sigma_{k-1}^\mathrm{left}}$;  
	\item  if $\sigma_k = \sigma_{k-1}^\mathrm{right} $, put $u_k = + u_{k-1}^\mathrm{right} \in L_{\sigma_{k-1}^\mathrm{right}}$.
\end{enumerate}
In particular, the algorithm starts by making a choice of normalization covector $u_1 \in L_{\sigma_1} = L_{(n-1, 0, 0)}$.   Notice that, compared to the setting of left snakes (Definition \ref{def:defining-the-projective-basis-by-induction} and Figure \ref{fig:defining-projective-basis}), the signs defining the projective basis have been swapped.  

	An ordered pair $(\sigma, \sigma^\prime)$ of right snakes forms an adjacent pair if  either:
\begin{enumerate}
	\item  for some $ 2 \leq k \leq n-1 $, 
\begin{enumerate}
	\item  $ \sigma_{j} = \sigma^\prime_{j} $
	\quad\quad
	\quad\quad
	\quad\quad
	\quad\quad
	\quad\quad
	\quad\quad
	 $( 1 \leq j \leq k-1 $, \quad $ k+1 \leq j \leq n)$,
	\item  $\sigma_k = \sigma_{ k-1 }^\mathrm{left} \,\, (=\sigma_{k-1}^{\prime \mathrm{left}}), $ \quad and \quad
  $ \sigma^\prime_k = \sigma_{ k-1 }^\mathrm{right} \,\, (=\sigma_{k-1}^{\prime \mathrm{right}}) $,
\end{enumerate} 
in which case $ ( \sigma, \sigma^\prime ) $ is called an adjacent pair of diamond-type;
\item  
	\begin{enumerate}
	\item  $ \sigma_{j} = \sigma^\prime_{j}$ 
	\quad\quad
	\quad\quad
	\quad\quad
	\quad\quad
	\quad\quad
	\quad\quad
	$(1 \leq j \leq n-1)$,
	\item  $\sigma_n = \sigma_{ n-1 }^\mathrm{left}  \,\, (=\sigma_{n-1}^{\prime \mathrm{left}})$, \quad and \quad
  $ \sigma^\prime_n = \sigma_{ n-1 }^\mathrm{right} \,\, (=\sigma_{n-1}^{\prime \mathrm{right}})$,
\end{enumerate} 
in which case $ ( \sigma, \sigma^\prime ) $ is called an adjacent pair of tail-type.
\end{enumerate}

Given an adjacent pair $(\sigma, \sigma^\prime)$ of right snakes of diamond-type, there is naturally associated a vertex $(a,b,c) \in \Theta_n$ to which is assigned a Fock-Goncharov triangle invariant $X_{abc}$. 

\begin{proposition}[Fock-Goncharov]
\label{prop:right-snake-moves}
	Let $ ( E, F, G ) $ be a maximum span triple, $ ( \sigma, \sigma^\prime ) $ an adjacent pair of right snakes, and $\mathscr{U}$, $\mathscr{U}^\prime$ the corresponding normalized projective bases of $V^*$, satisfying the compatibility condition $u_1 = u^\prime_1 \in L_{\sigma_1}=L_\eta$.  

If $ ( \sigma, \sigma^\prime ) $ is of diamond-type, then the change of basis matrix $\vec{B}_{\mathscr{U} \to \mathscr{U}^\prime} \in \mathrm{GL}_n(\Complex)$ equals
\begin{equation*}  
	\vec{B}_{\mathscr{U} \to \mathscr{U}^\prime} = 
	X_{abc}^{-(k-1)/n} \, \vec{S}_k^\mathrm{right}(X_{abc})
	\quad \in \mathrm{GL}_n(\Complex) \quad\quad  (\text{see } \S \ref{sec:theorem-3}).
\end{equation*}

If $ ( \sigma, \sigma^\prime ) $ is of tail-type, then the change of basis matrix $\vec{B}_{\mathscr{U} \to \mathscr{U}^\prime}$ equals
\begin{equation*}
	\vec{B}_{\mathscr{U} \to \mathscr{U}^\prime} = 
	\vec{S}_1^\mathrm{right}
	\quad  \in  \mathrm{SL}_n(\Complex)  \quad\quad  (\text{see } \S \ref{sec:theorem-3}). 
\end{equation*}
\end{proposition}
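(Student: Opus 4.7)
The plan is to reduce Proposition \ref{prop:right-snake-moves} to the left-snake case of Proposition \ref{PROP:ELEMENTARY-SNAKE-MOVE-MATRICES} via the reversal symmetry $\tilde{\sigma}_k := \sigma_{n-k+1}$, which is a bijection between right and left $n$-snakes preserving the snake-head. Under reversal, a right-snake diamond pair at index $k$ becomes a left-snake diamond pair at index $n-k+1$, and a right-snake tail pair at index $n$ becomes a left-snake tail pair at index $1$. However, because the two adjacent-pair definitions use opposite sign conventions (right: $\sigma_k = \sigma_{k-1}^\mathrm{left}$ for the first snake; left: $\sigma_k = \sigma_{k+1}^\mathrm{right}$ for the first snake), the order within the pair swaps: the right-snake pair $(\sigma, \sigma^\prime)$ corresponds to the left-snake pair $(\tilde{\sigma}^\prime, \tilde{\sigma})$. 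The associated interior vertex $(a, b, c) \in \Theta_n$, and hence the triangle invariant $X_{abc}$, coincide for the two pairs since they are determined by the four-vertex geometric configuration in $\Theta_{n-1}$.

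Next I would compare the projective bases $\mathscr{U}$ of $\sigma$ and $\tilde{\mathscr{U}}$ of $\tilde{\sigma}$ relative to the common flag triple $(E, F, G)$, normalized compatibly so that $\tilde{u}_n = u_1 \in L_{\sigma_1} = L_{\tilde{\sigma}_n}$. The coplanarity identity $u + u^\mathrm{left} + u^\mathrm{right} = 0$ is symmetric, so the auxiliary covectors $u^\mathrm{left}, u^\mathrm{right}$ appearing in the two recursions coincide. Combined with the fact that the left-snake and right-snake sign conventions are opposite, an induction on $j$ establishes the relation
\begin{equation*}
	\tilde{u}_j = (-1)^{n-j} u_{n-j+1} \quad (j = 1, \ldots, n);
\end{equation*}
both update cases ($\sigma_k = \sigma_{k-1}^\mathrm{left}$ or $\sigma_k = \sigma_{k-1}^\mathrm{right}$) introduce the same extra factor of $-1$. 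Equivalently, $\mathbf{B}_{\mathscr{U} \to \tilde{\mathscr{U}}} = \mathbf{J}$, the anti-diagonal matrix with entry $(-1)^{i-1}$ at position $(i, n-i+1)$. The same relation holds with primes, giving $\mathbf{B}_{\mathscr{U}^\prime \to \tilde{\mathscr{U}}^\prime} = \mathbf{J}$ as well.

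Combining these via composition yields $\mathbf{B}_{\mathscr{U} \to \mathscr{U}^\prime} = \mathbf{J} \cdot \mathbf{B}_{\tilde{\mathscr{U}} \to \tilde{\mathscr{U}}^\prime} \cdot \mathbf{J}^{-1}$, and Proposition \ref{PROP:ELEMENTARY-SNAKE-MOVE-MATRICES} applied to $(\tilde{\sigma}^\prime, \tilde{\sigma})$ identifies $\mathbf{B}_{\tilde{\mathscr{U}}^\prime \to \tilde{\mathscr{U}}}$ as $X_{abc}^{(n-k)/n}\vec{S}_{n-k+1}^\mathrm{left}(X_{abc})$ in the diamond case, or as $\vec{S}_1^\mathrm{left}$ in the tail case. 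The main obstacle is then the algebraic identity that conjugation by $\mathbf{J}$, which implements the reflection $A_{i,j} \mapsto (-1)^{i-j} A_{n-i+1, n-j+1}$, carries the inverse of the (scaled) left-shearing matrix at index $n-k+1$ precisely onto the (scaled) right-shearing matrix at index $k$. The scalar powers of $X_{abc}$ on the two sides are reconciled after a short exponent check, and the sign bookkeeping works out because the $-1$ entry produced by inverting the superdiagonal $+1$ of the left-shearing matrix is cancelled by the $(-1)^{i-j}$ sign of the conjugation, yielding the required $+1$ at the appropriate subdiagonal position of the right-shearing matrix; the tail case is a degenerate version with no scalar factor.
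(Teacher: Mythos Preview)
Your approach is genuinely different from the paper's. The paper gives no argument at all beyond ``See \cite{FockIHES06}; similar to the proof of Proposition~\ref{PROP:ELEMENTARY-SNAKE-MOVE-MATRICES}'', i.e.\ it expects the reader to rerun the direct coplanarity computation with the right-snake sign conventions. You instead \emph{deduce} the right-snake statement from the left-snake one by the reversal bijection $\tilde\sigma_k=\sigma_{n-k+1}$ together with conjugation by the signed anti-diagonal matrix $\mathbf{J}$. This is a cleaner, more conceptual route: the sign relation $\tilde u_j=(-1)^{n-j}u_{n-j+1}$ is correctly derived, the swap of order within the adjacent pair is correctly identified, and the tail case works out exactly as you describe.

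One bookkeeping point deserves care in the diamond case. With the paper's convention, $\vec{S}_\ell^{\mathrm{right}}(X)$ has its sub\-diagonal $1$ sitting in row $n-\ell+1$ (and $\ell-1$ copies of $X^{-1}$ in rows $n-\ell+2,\dots,n$); this is visible in the $n=4$ example in \S\ref{sec:concrete-formulas}. A direct computation of your conjugation identity gives
\[
\mathbf{J}\,\bigl(X^{(m-1)/n}\vec{S}_m^{\mathrm{left}}(X)\bigr)^{-1}\,\mathbf{J}^{-1}
\;=\;
X^{-(m-1)/n}\,\vec{S}_m^{\mathrm{right}}(X),
\]
i.e.\ the subscript is \emph{preserved}, not swapped to $n-m+1$. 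So your sentence ``carries the inverse of the left-shearing matrix at index $n-k+1$ onto the right-shearing matrix at index $k$'' is not quite right under the paper's indexing: what you actually obtain is $X^{-(n-k)/n}\vec{S}_{n-k+1}^{\mathrm{right}}(X)$. You should check this against how the index $k$ in the proposition is meant to match the shearing-matrix subscript; the geometry of your argument is correct, but the final labeling needs to be reconciled with the paper's conventions before declaring the identity established.
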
  

\begin{proof}
	See \cite[\S 9]{FockIHES06}.  Similar to the proof of Proposition \ref{PROP:ELEMENTARY-SNAKE-MOVE-MATRICES}.  
\end{proof}

\begin{remark}
\label{rem:left-and-right-snakes-conventions}
From now on, ``snake'' means ``left snake'', as in Definition \ref{def:snakes}, and we will say explicitly if we are using ``right snakes''.  
\end{remark}

	\subsubsection{Snake moves for edges}
	\label{sec:more-elementary-snake-moves}

\begin{warning}
	In this subsubsection, we will consider snake-heads in the set of corner vertices $\Gamma(\Theta_{n-1})$ other than $(n-1,0,0)$, specifically $\eta$ below; see Remark \ref{rem:snake-heads}.  
\end{warning}

Let $(E, G, F, F^\prime)$ be a maximum span flag quadruple; see \S \ref{subsec:generic-triples-of-flags}.  By \S \ref{subsec:fock-goncharov-triangle-and-edge-invariants}, for each $j=1, \dots, n-1$ we may consider the Fock-Goncharov edge invariant $Z_j = \epsilon_j(E, G, F, F^\prime) \in \Complex - \left\{ 0 \right\}$ associated to the quadruple $(E, G, F, F^\prime)$.  

Consider two copies of the discrete triangle; Figure \ref{fig:shearing-snakes}.  The bottom triangle $\Theta_{ n-1 }(G, F, E)$ has a maximum span flag triple $(G, F, E)$ assigned to the corner vertices $\Gamma(\Theta_{n-1})$, and the top triangle $\Theta_{ n-1 }(E, F^\prime, G)$ has assigned to $\Gamma(\Theta_{n-1})$ a maximum span flag triple~$(E, F^\prime, G)$.  

Define (left) snakes $\sigma$ and $\sigma^\prime$ in $\Theta_{ n-1 }(G, F, E)$ and $\Theta_{ n-1 }(E, F^\prime, G)$, respectively, as~follows:
\begin{align*}
	\sigma_k &= (n-k, 0, k-1)
	  &&\in \Theta_{ n-1 }(G, F, E)
	&&\left(  k=1, \dots, n  \right);
	\\
	\sigma^\prime_k &= (k-1, 0, n-k)
	&&\in \Theta_{ n-1 }(E, F^\prime, G)
	&&\left(  k=1, \dots, n  \right).
\end{align*}

Notice that the line decompositions associated to the snakes $\sigma$ and $\sigma^\prime$ and their respective triples of flags are the same:
\begin{equation*}
	L_{\sigma_k} = L_{\sigma^\prime_k} 
	=  \left(E^{(k-1)} \oplus G^{(n-k)}\right)^\perp
	\quad  \subset V^*
	\quad\quad  \left(  k=1, \dots, n  \right).
\end{equation*}
Let $\mathscr{U}$ and $\mathscr{U}^\prime$ be the associated normalized projective bases, where the normalizations are chosen in a compatible way, that is, such that  $u_n = u^\prime_n$ in $L_{\sigma_n} = L_{\sigma^\prime_n}$.

	\begin{figure}[htb]
	\centering
	\includegraphics[scale=.55]{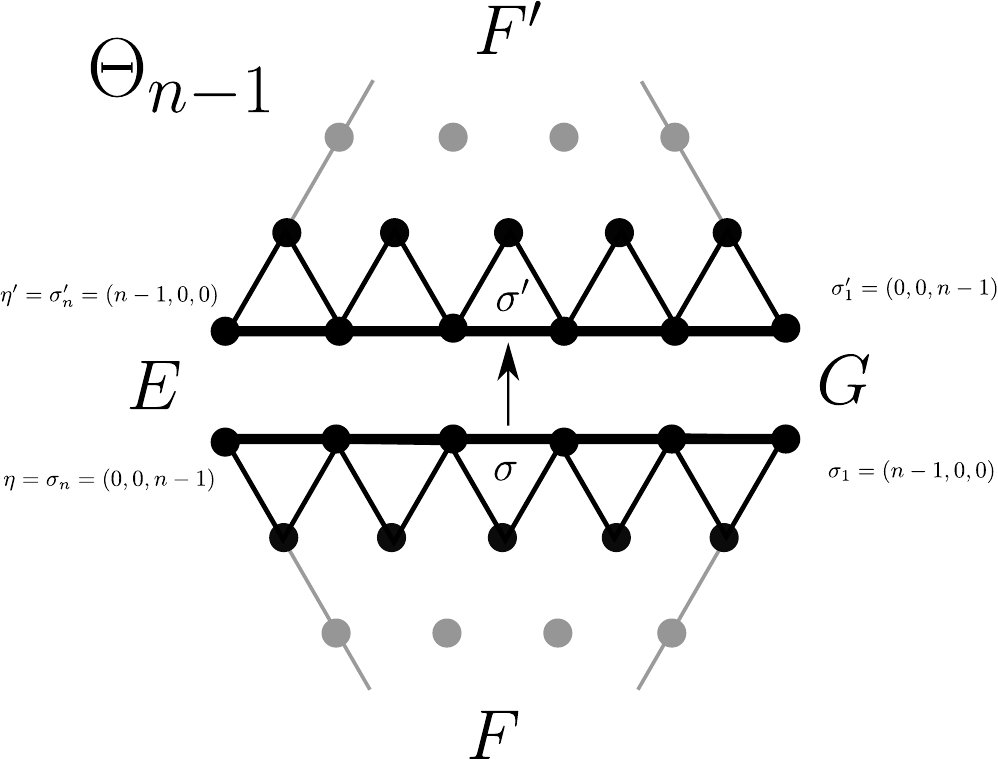}
	\caption{\small Edge move}
	\label{fig:shearing-snakes}
	\end{figure} 

\begin{proposition}[Fock-Goncharov]
\label{prop:edge-move-change-of-basis-matrix}
	The change of basis matrix expressing the snake edge move $\sigma \to \sigma^\prime$ is
	\begin{equation*}
		\vec{B}_{\mathscr{U} \to \mathscr{U}^\prime} =
		\prod_{j=1}^{n-1}  Z_j^{+j/n} \, \vec{S}_j^\mathrm{edge}(Z_j)
		\quad  \in \mathrm{GL}_n(\Complex) \quad\quad  (\text{see } \S \ref{sec:theorem-3}).
	\end{equation*}
\end{proposition}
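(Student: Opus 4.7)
The plan is to recognize $\mathbf{B}_{\mathscr{U} \to \mathscr{U}^\prime}$ as a diagonal matrix and verify a one-step recursion for its diagonal entries.

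A direct inspection of the two snakes shows that $L_{\sigma_k} = L_{\sigma^\prime_k} = (E^{(k-1)} \oplus G^{(n-k)})^\perp$ for all $k$, since the middle ($F$ or $F^\prime$) coordinate of every $\sigma_k$ and $\sigma^\prime_k$ is zero. With the common normalization $u_n = u^\prime_n$, the inductive construction of the projective bases therefore produces $u^\prime_k = \lambda_k u_k$ for nonzero scalars $\lambda_k$ with $\lambda_n = 1$, forcing $\mathbf{B}_{\mathscr{U} \to \mathscr{U}^\prime}$ to be the diagonal matrix $\mathrm{diag}(\lambda_1^{-1}, \dots, \lambda_{n-1}^{-1}, 1)$. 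On the other hand, each $Z_j^{+j/n}\vec{S}_j^{\mathrm{edge}}(Z_j)$ is the diagonal matrix with $Z_j$ in the first $j$ positions and $1$ elsewhere, so their product is diagonal with $k$-th entry $\prod_{j=k}^{n-1}Z_j$. The proposition therefore reduces to the single one-step identity $\lambda_{k+1}/\lambda_k = Z_k$ for $k = 1, \dots, n-1$.

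To establish this identity, apply one step of the projective basis construction. Both algorithms decompose their current covector inside the common 2-plane $P_k = (E^{(k-1)} \oplus G^{(n-k-1)})^\perp$, which contains the four distinguished lines $L_{\sigma_{k+1}}, L_{\sigma_k}$, $L_F := (E^{(k-1)} \oplus F^{(1)} \oplus G^{(n-k-1)})^\perp$ (used by the $\sigma$-algorithm), and $L_{F^\prime} := (E^{(k-1)} \oplus F^{\prime(1)} \oplus G^{(n-k-1)})^\perp$ (used by the $\sigma^\prime$-algorithm). Applying triangular symmetry to transport the standard snake-head $(n-1, 0, 0)$ to the $E$-corner $(0, 0, n-1)$ of $\Theta_{n-1}(G, F, E)$ shows that $\sigma_k$ is the ``left'' ($\alpha$-increasing) neighbor of $\sigma_{k+1}$, giving $u_k = +u_{k+1}^{\mathrm{left}}$; meanwhile $\sigma^\prime$ uses the standard convention directly in $\Theta_{n-1}(E, F^\prime, G)$, giving $u^\prime_k = -u_{k+1}^{\prime\,\mathrm{right}}$. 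Taking as generators of the four lines the covectors induced, via a fixed volume form on $V$, from the wedge products $e^{(k)} \wedge g^{(n-k-1)}$, $e^{(k-1)} \wedge g^{(n-k)}$, $e^{(k-1)} \wedge f^{(1)} \wedge g^{(n-k-1)}$, and $e^{(k-1)} \wedge f^{\prime(1)} \wedge g^{(n-k-1)}$, expanding the linear dependence among them inside $P_k$ turns the ratio $\lambda_{k+1}/\lambda_k$ into a rational expression in wedge products that matches the formula for $Z_k = \epsilon_k(E, G, F, F^\prime)$ recalled in \S\ref{subsec:fock-goncharov-triangle-and-edge-invariants}.

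The main obstacle is the sign bookkeeping: the signs from the two distinct snake-head conventions ($+$ for $\sigma$ and $-$ for $\sigma^\prime$) must combine correctly with the $-1$ prefactor in the definition of $Z_k$ so that the identity $\lambda_{k+1}/\lambda_k = Z_k$ emerges without any stray sign. Once these conventions are pinned down from the triangular symmetry, the remaining step is a short multilinear exercise comparing two decompositions of the same covector $u_{k+1}$ in the plane $P_k$, entirely parallel in spirit to the coplanar-line computation underlying Proposition \ref{PROP:ELEMENTARY-SNAKE-MOVE-MATRICES}.
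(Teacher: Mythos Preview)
Your proposal is correct and follows exactly the approach the paper points to: the paper's own proof is only a reference (``Similar to the proof of Proposition~\ref{PROP:ELEMENTARY-SNAKE-MOVE-MATRICES}''), and what you have written is precisely that analogue carried out---reduce to a diagonal change of basis, isolate the one-step ratio $\lambda_{k+1}/\lambda_k$, and identify it with the edge invariant $Z_k$ via the coplanar-line decomposition in the plane $(E^{(k-1)}\oplus G^{(n-k-1)})^\perp$. Your handling of the two snake-head conventions and the resulting sign assignments $u_k=+u_{k+1}^{\mathrm{left}}$ versus $u'_k=-u_{k+1}^{\prime\,\mathrm{right}}$ is right, and the final wedge-product computation does collapse to the defining formula for $\epsilon_k(E,G,F,F')$ including the overall $-1$.
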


\begin{proof}
	See \cite[\S 9]{FockIHES06}.  Similar to the proof of Proposition \ref{PROP:ELEMENTARY-SNAKE-MOVE-MATRICES}; see also \cite[\S 2.22]{DouglasThesis20}. 
\end{proof}

Next, define snakes $\sigma$ and $\sigma^\prime$ in a single discrete triangle $\Theta_{n-1}(E, F, G)$ by (see Figure \ref{fig:u-turn})
\begin{align*}
	\sigma_k &= (n-k, 0, k-1)
	  &&\in  \Theta_{ n-1 }
	&&\left(  k=1, \dots, n  \right);
\\
	\sigma^\prime_k &= (k-1, 0, n-k)
	&&\in  \Theta_{ n-1 }
	&&\left(  k=1, \dots, n  \right).
\end{align*}
	
Notice that the lines $L_{\sigma_k} \neq L_{\sigma^\prime_k}$ in $V^*$ are not equal.  In fact, $L_{\sigma_k} = L_{\sigma^\prime_{n-k+1}}$.  Let $\mathscr{U}$ and $\mathscr{U}^\prime$ be the associated normalized projective bases obtained by choosing $u_n = u^\prime_1$ in $L_\eta = L_{\sigma_n} = (G^{(n-1)})^\perp$.

\begin{proposition}
\label{prop:U-turn-change-of-basis-matrix}
The change of basis matrix expressing the snake move $\sigma \to \sigma^\prime$ is
	\begin{equation*}
		 \vec{B}_{\mathscr{U} \to \mathscr{U}^\prime} 
		=  \vec{U}
		\quad  \in  \mathrm{SL}_n(\Complex) \quad\quad  (\text{see } \S \ref{sec:theorem-3}).
	\end{equation*}
\end{proposition}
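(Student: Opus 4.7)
The plan is to prove the pointwise relation $u_k = (-1)^{n-k} u'_{n-k+1}$ for every $k = 1, \ldots, n$, from which the matrix identity $\mathbf{B}_{\mathscr{U} \to \mathscr{U}'} = \vec{U}$ follows immediately by comparing with the formula $\vec{U}_{i, n-i+1} = (-1)^{n-i}$ for the anti-diagonal entries of the U-turn matrix.

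The key structural observation is that the snakes $\sigma$ and $\sigma'$ trace out the same vertex set along the $EG$-edge $\{\beta = 0\}$ of $\Theta_{n-1}$, but in opposite directions, so that $\sigma_k = \sigma'_{n-k+1}$ and in particular $L_{\sigma_k} = L_{\sigma'_{n-k+1}}$. Going further, the step $u_{k+1} \to u_k$ of the $\sigma$-algorithm and the step $u'_{n-k+1} \to u'_{n-k}$ of the $\sigma'$-algorithm both take place in the same $2$-dimensional plane $(E^{(n-k-1)} \oplus G^{(k-1)})^\perp \subseteq V^*$, and involve the same triple of coplanar lines: the two consecutive edge lines $L_{\sigma_k}, L_{\sigma_{k+1}}$ together with a common intermediate line $L_{(n-k-1, 1, k-1)}$ built from $F^{(1)}$.

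I would then carry out a downward induction on $k$, with base case $k = n$ being the normalization $u_n = u'_1$. For the inductive step, the coplanarity relation from the $\sigma'$-algorithm rearranges to $u'_{n-k} = u'_{n-k+1} + u'_{n-k+1}^{\mathrm{left}}$, where the second summand lies in the intermediate line. Substituting $u_{k+1} = (-1)^{n-k-1} u'_{n-k}$ from the inductive hypothesis expresses $-u_{k+1}$ as a sum of one vector in $L_{\sigma_k}$ and one in $L_{(n-k-1, 1, k-1)}$. But the $\sigma$-algorithm produces its own decomposition $-u_{k+1} = u_k + u_{k+1}^{\mathrm{right}}$ in the same direct sum; uniqueness of the decomposition then forces $u_k = (-1)^{n-k} u'_{n-k+1}$, completing the induction.

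The principal obstacle will be to make precise the triangular symmetry from Remark~\ref{rem:snake-heads} for the snake $\sigma$, whose snake-head is $(0, 0, n-1)$ rather than $(n-1, 0, 0)$. One must interpret this as the cyclic rotation $(\alpha, \beta, \gamma) \mapsto (\gamma, \alpha, \beta)$, so that for $\sigma$ the labels ``left'' and ``right'' correspond respectively to $\alpha$-increase and $\beta$-increase, and the $+/-$ sign prescription of Definition~\ref{def:defining-the-projective-basis-by-induction} transports across verbatim. With this choice, for our specific snake the rule $\sigma_k = \sigma_{k+1}^{\mathrm{left}}$ applies at every step, so the rule $u_k = +u_{k+1}^{\mathrm{left}}$ interacts cleanly with the inductive signs; choosing the wrong (reflection) symmetry would produce an incorrect overall sign and must be ruled out by a small sanity check, e.g., verifying the case $n = 2$ directly against $\vec{U} = \left(\begin{smallmatrix} 0 & -1 \\ 1 & 0 \end{smallmatrix}\right)$.
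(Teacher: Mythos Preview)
Your argument is correct. The paper itself does not supply a proof of this proposition, instead deferring to \cite[\S 9]{FockIHES06} and \cite[\S 2.22]{DouglasThesis20} with the remark that it is ``similar to the proof of Proposition~\ref{PROP:ELEMENTARY-SNAKE-MOVE-MATRICES}''; your direct induction on $k$ establishing $u_k = (-1)^{n-k} u'_{n-k+1}$ is exactly the kind of hands-on verification that hint points to, and the details you give (same plane $(E^{(n-k-1)}\oplus G^{(k-1)})^\perp$, same three coplanar lines, uniqueness of the direct-sum decomposition) are all sound.

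One small clarification worth recording explicitly in your write-up: the cyclic symmetry you invoke can be described either as the forward rotation $(\alpha,\beta,\gamma)\mapsto(\beta,\gamma,\alpha)$ carrying the standard head $(n-1,0,0)$ to $(0,0,n-1)$, or as its inverse $(\alpha,\beta,\gamma)\mapsto(\gamma,\alpha,\beta)$ pulling back to the standard picture. Either way the upshot is what you state, that for $\sigma$ the label ``left'' means $\alpha$-increase and ``right'' means $\beta$-increase, and hence every step of $\sigma$ is a left step with $u_k = +u_{k+1}^{\mathrm{left}}$. Your $n=2$ sanity check against $\vec{U} = \left(\begin{smallmatrix} 0 & -1 \\ 1 & 0 \end{smallmatrix}\right)$ confirms this is the correct interpretation of Remark~\ref{rem:snake-heads}.
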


\begin{proof}
	See \cite[\S 9]{FockIHES06}.  Similar to the proof of Proposition \ref{PROP:ELEMENTARY-SNAKE-MOVE-MATRICES}; see also \cite[\S 2.22]{DouglasThesis20}. 
\end{proof}

\begin{remark}  This last U-turn move will not be needed in this paper, but appears in \cite{DouglasArxiv21}.  
\end{remark}

\begin{figure}[htb]
	\centering
	\includegraphics[width=.55\textwidth]{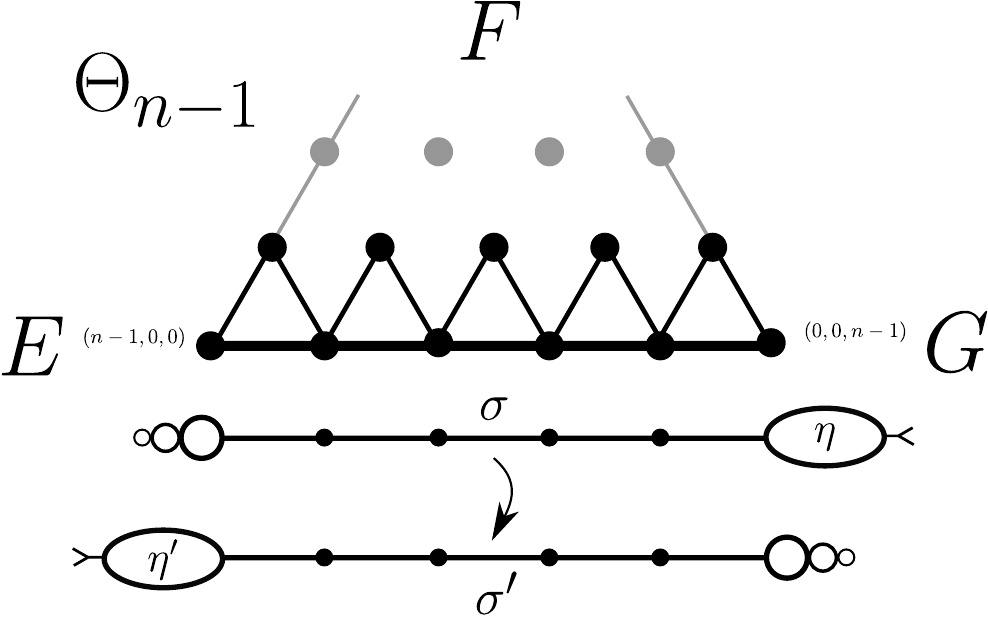}
	\caption{\small Clockwise U-turn Move}
	\label{fig:u-turn}
\end{figure}

	\subsection{Triangle and edge invariants as shears}
	\label{sec:triangle-invariants-as-shears}

This subsection does not involves snakes.  A \textit{line} $L$ (resp. \textit{plane} $P$) in $ V^* $ is a 1-dimensional (resp. 2-dimensional) subspace of $V^*$.  

\begin{definition}
	A \textit{shear} $S$ from a line $L_1$ in $V^*$ to another line $ L_2 $ in $V^*$ is a linear isomorphism $ S : L_1 \to L_2 $. 
\end{definition}

	\subsubsection{Triangle invariants as shears}

Let $ ( E, F, G ) $ be a maximum span flag triple, and consider an internal vertex $(a, b, c) \in \mathrm{int}(\Theta_n)$ in the $n$-discrete triangle.  As in \S \ref{sssec:diamond-and-tail-moves}, the level sets in $\Theta_{n-1}$ defined by the equations $\alpha=a$, $\beta=b$, $\gamma=c$ delineate the boundary of a downward-facing triangle $\nabla$ with vertices $\nu_1^\prime, \nu_2^\prime, \nu_3^\prime$, which is centered in a larger upward-facing triangle $\Delta$ with vertices $\nu_1$, $\nu_2$, $\nu_3$; see Figure \ref{fig:diamond-move-proof}.  There are also three smaller upward-facing triangles $\Delta_1$, $\Delta_2$, $\Delta_3$ defined by their vertices:

\begin{equation*}
	\Delta_1 = \left\{ \nu_1, \nu^\prime_3, \nu^\prime_2 \right\},
	\quad\quad
	\Delta_2 = \left\{ \nu_2, \nu^\prime_1, \nu^\prime_3 \right\},
	\quad\quad
	\Delta_3 = \left\{ \nu_3, \nu^\prime_2, \nu^\prime_1 \right\}.
\end{equation*}

Given one of these small upward-facing triangles, say $\Delta_1$, the property we used to define projective bases in \S \ref{subsec:projective-basis-of-Cndual-associated-to-a-triple-of-flags-and-a-snake} is that the three lines  $L_{\nu_1}$, $L_{\nu^\prime_3}$, $L_{\nu^\prime_2}$ in $V^*$ attached to the vertices of $\Delta_1$ are coplanar.  Consequently, to the triangle $\Delta_1$ there are associated six shears: $S^{\Delta_1}_{\nu_1 \nu^\prime_3} : L_{\nu_1} \to L_{\nu^\prime_3}$ and $S^{\Delta_1}_{\nu^\prime_3 \nu^\prime_2} : L_{\nu^\prime_3} \to L_{\nu^\prime_2}$ and $S^{\Delta_1}_{\nu^\prime_2 \nu_1} : L_{\nu^\prime_2} \to L_{\nu_1}$ and their inverses.  For instance, the shear $S^{\Delta_1}_{\nu_1 \nu^\prime_3}$ sends a point $ p$ in $ L_{\nu_1} $ to the unique point $ p^\prime$ in $L_{\nu^\prime_3}$ such that
\begin{equation*}
	p + p^\prime + p^{\prime\prime} = 0 
	\quad  \in V^*
\end{equation*}
for some (unique) point $p^{\prime\prime} \in L_{\nu^\prime_2}$. And $S^{\Delta_1}_{\nu_1 \nu^\prime_2}(p) = p^{\prime \prime}$.  Similarly for the other triangles $\Delta_2, \Delta_3$.  

Let $X_{abc} = \tau_{abc}(E, F, G)$ be the triangle invariant associated to the vertex $(a,b,c)$.  

\begin{proposition}
\label{prop:triangle-invariants-as-shears}
	Fix a point $p_0$ in the line $L_{ \nu^\prime_1 } $.  
	Let $ p_1 $ be the point in the line $ L_{\nu^\prime_3 } $ resulting from the shear $S^{\Delta_2}_{\nu^\prime_1 \nu^\prime_3}$ associated to the triangle $ \Delta_2 $ applied to the point $ p_0$,  
	let $ p_2 $ be the point in the line $ L_{ \nu^\prime_2 } $ resulting from the shear $S^{\Delta_1}_{\nu^\prime_3 \nu^\prime_2}$ associated to the triangle $ \Delta_1 $ applied to the point $ p_1$, and
	let $ p_3 $ be the point in the line $ L_{ \nu^\prime_1 } $ resulting from the shear $S^{\Delta_3}_{\nu^\prime_2 \nu^\prime_1}$ associated to the triangle $ \Delta_3 $ applied to the point $ p_2$.  It follows that
\begin{equation*}
	p_3 = +X_{abc} \, p_0.
\end{equation*}  

This was the case going counterclockwise around the $(a,b,c)$-downward-facing triangle $\nabla$; see Figure {\upshape\ref{fig:diamond-move-proof}}.  If instead one goes clockwise around $\nabla$, then the total shearing is $+X_{abc}^{-1}$.  
\end{proposition}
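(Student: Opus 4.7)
The plan is to reduce the statement to a wedge-product identity that can be read off directly from the formula defining $\tau_{abc}(E, F, G)$ in \S \ref{subsec:fock-goncharov-triangle-and-edge-invariants}. First I would fix a nonzero element $\omega \in \Lambda^n(V)$ and, for every vertex $\nu = (\alpha, \beta, \gamma) \in \Theta_{n-1}$ relevant to the statement, choose generators $e^{(\alpha)}, f^{(\beta)}, g^{(\gamma)}$ of the one-dimensional exterior powers $\Lambda^\alpha(E^{(\alpha)}), \Lambda^\beta(F^{(\beta)}), \Lambda^\gamma(G^{(\gamma)})$. Under the canonical isomorphism $\Lambda^{n-1}(V) \cong V^*$ determined by $\omega$, the element $e^{(\alpha)} \wedge f^{(\beta)} \wedge g^{(\gamma)} \in \Lambda^{n-1}(V)$ corresponds to a nonzero covector $\xi_\nu$ that vanishes on $E^{(\alpha)} \oplus F^{(\beta)} \oplus G^{(\gamma)}$ and is therefore a distinguished generator of the line $L_\nu$.

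Next, for each small upward triangle $\Delta_s = \{\nu_s, \nu'_j, \nu'_k\}$ ($s = 1, 2, 3$), the Maximum Span Property gives a unique coplanarity relation
\begin{equation*}
	A_s \xi_{\nu_s} + B_s \xi_{\nu'_j} + C_s \xi_{\nu'_k} = 0 \quad \in V^*.
\end{equation*}
A standard extraction argument, pairing this relation with an auxiliary 1-form and evaluating in $\Lambda^n(V) \cong \C$, expresses each coefficient $A_s, B_s, C_s$ as a top wedge product of the form $e^{(a')} \wedge f^{(b')} \wedge g^{(c')}$, in which the triple $(a', b', c') \in \Theta_n$ is precisely one of the six hexagonal neighbors of $(a, b, c)$ appearing in the definition of $\tau_{abc}$. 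By the very definition of the shear through the equation $p + p' + p'' = 0$, the map $S^{\Delta_s}_{\nu'_j \nu'_k}$ then sends $\xi_{\nu'_j}$ to $(C_s / B_s) \, \xi_{\nu'_k}$.

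Composing the three shears around the counterclockwise loop $\nu'_1 \to \nu'_3 \to \nu'_2 \to \nu'_1$ therefore sends $\xi_{\nu'_1}$ to the scalar multiple $(C_2/B_2)(C_1/B_1)(C_3/B_3) \, \xi_{\nu'_1}$. After matching the six wedge products arising here against the six wedge products in the formula for $\tau_{abc}(E, F, G)$, this scalar equals $+X_{abc}$, giving the counterclockwise statement. The clockwise assertion follows at once by inverting each shear.

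The main obstacle is combinatorial bookkeeping: writing out the coordinates of $\nu_s, \nu'_s$ explicitly in terms of $(a, b, c)$, identifying for each $\Delta_s$ which hexagonal neighbor of $(a, b, c)$ contributes to $B_s$ versus $C_s$, and verifying after cancellation that the product of the three ratios reproduces the Fock-Goncharov formula with the correct overall sign. Once these identifications are in place, no additional ideas are needed: the verification reduces to a direct computation with the defining formula for $\tau_{abc}$.
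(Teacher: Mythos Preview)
Your approach is correct and is the standard argument for this fact. The paper itself does not supply a proof here; it defers to \cite{FockIHES06, GaiottoAnnHenriPoincare14} and \cite[\S 2.21]{DouglasThesis20}, noting only the similarity to Proposition \ref{PROP:ELEMENTARY-SNAKE-MOVE-MATRICES}. What one finds in those references is precisely your strategy: use the isomorphism $\Lambda^{n-1}(V) \cong V^*$ induced by a choice of $\omega \in \Lambda^n(V)$ to produce a distinguished generator $\xi_\nu$ of each line $L_\nu$, extract the shear ratios from the coplanarity relations as ratios of top wedge products $e^{(a')} \wedge f^{(b')} \wedge g^{(c')}$, and match these against the six hexagonal terms defining $\tau_{abc}(E,F,G)$.

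One minor wording issue: to isolate the coefficients $B_s, C_s$ you wedge the relation (viewed in $\Lambda^{n-1}(V)$) with a suitable \emph{vector} in $V$, not a $1$-form, so as to land in $\Lambda^n(V) \cong \C$. Concretely, wedging with an element of $E^{(\alpha+1)}\setminus E^{(\alpha)}$ (respectively $F^{(\beta+1)}\setminus F^{(\beta)}$, $G^{(\gamma+1)}\setminus G^{(\gamma)}$) kills exactly one of the three terms and exhibits the surviving two coefficients as the desired top wedges. With that adjustment, your intent is clear, and the sign-and-index bookkeeping you anticipate is indeed the only remaining work.
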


\begin{proof}
	See \cite{FockIHES06, GaiottoAnnHenriPoincare14}.  Similar to that of Proposition \ref{PROP:ELEMENTARY-SNAKE-MOVE-MATRICES}; see also \cite[\S 2.21]{DouglasThesis20}.
\end{proof}

	\subsubsection{Edge invariants as shears}
	\label{sec:edge-invariants-as-shears}

Similarly, consider two discrete triangles $\Theta_{n-1}(E, F^\prime, G)$ and $\Theta_{n-1}(G, F, E)$ as in the first half of \S \ref{sec:more-elementary-snake-moves}, the edge invariant $Z_j = \epsilon_j(E, G, F, F^\prime)$ for $j=1,\dots,n-1$, and two small upward-facing (relatively speaking) triangles $\Delta^\prime$ and $\nabla$ in $\Theta_{n-1}(E, F^\prime, G)$ and $\Theta_{n-1}(G, F, E)$, respectively, as shown in Figure \ref{fig:edge-invariants-as-shears}.  

\begin{proposition}
\label{prop:edge-invariants-as-shears}
	Fix a point $p_0$ in the line $L_{\nu^\prime_0}(E, F^\prime, G)$.  
	Let $ p_1 $ be the point in the line $L_{ \nu_1^\prime }(E, F^\prime, G) = L_{\nu_1}(G,F,E) $ resulting from the shear $S^{\Delta^\prime}_{\nu_0^\prime \nu^\prime_1}$ associated to the triangle $ \Delta^\prime $ applied to the point $ p_0$,  
	and let $ p_2 $ be the point in the line $ L_{ \nu_0 }(G, F, E) = L_{\nu^\prime_0}(E, F^\prime, G) $ resulting from the shear $S^{\nabla}_{\nu_1 \nu_0}$ associated to the triangle $ \nabla $  applied to the point $ p_1$.  Then
\begin{equation*}
	p_2 = -Z_j \, p_0.  
\end{equation*}  

This was the case going counterclockwise around the $j$-th diamond; see Figure {\upshape\ref{fig:edge-invariants-as-shears}}.  If instead one goes clockwise around the diamond, then the total shearing is $-Z_j^{-1}$.  
\end{proposition}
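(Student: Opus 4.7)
The plan is to mimic the exterior-algebra computation underlying Proposition \ref{prop:triangle-invariants-as-shears} (and Proposition \ref{PROP:ELEMENTARY-SNAKE-MOVE-MATRICES}), specialized to the geometry of the common edge. First, fix a nonzero form $\omega \in \Lambda^n(V)^*$ and write down explicit generators of the four relevant lines as wedge covectors: $u_0(v) = \omega(e^{(j-1)} \wedge g^{(n-j)} \wedge v)$ for $L_{\nu_0'} = L_{\nu_0}$; $u_1(v) = \omega(e^{(j)} \wedge g^{(n-j-1)} \wedge v)$ for $L_{\nu_1'} = L_{\nu_1}$; $u_2'(v) = \omega(e^{(j-1)} \wedge f^{\prime(1)} \wedge g^{(n-j-1)} \wedge v)$ for $L_{\nu_2'}$; and $u_2(v) = \omega(e^{(j-1)} \wedge f^{(1)} \wedge g^{(n-j-1)} \wedge v)$ for $L_{\nu_2}$. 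The equalities $L_{\nu_k'} = L_{\nu_k}$ for $k = 0, 1$ are immediate because these two lines depend only on $E$ and $G$, not on $F$ or $F'$.

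Next, for each of the small triangles $\Delta'$ and $\nabla$, use the Maximum Span Property to observe that the three vertex lines are coplanar in $V^*$ (exactly as in \S \ref{subsec:projective-basis-of-Cndual-associated-to-a-triple-of-flags-and-a-snake}), so the three chosen generators satisfy a unique-up-to-scalar linear relation. Extract the ratios of coefficients by evaluating these relations at well chosen vectors: substituting $v = f^{\prime(1)}$ into the $\Delta'$-relation $\alpha_0 u_0 + \alpha_1 u_1 + \alpha_2' u_2' = 0$ kills the $u_2'$-term (since $f^{\prime(1)} \wedge f^{\prime(1)} = 0$), yielding
\begin{equation*}
  \frac{\alpha_1}{\alpha_0} \;=\; -\, \frac{e^{(j-1)} \wedge g^{(n-j)} \wedge f^{\prime(1)}}{e^{(j)} \wedge g^{(n-j-1)} \wedge f^{\prime(1)}},
\end{equation*}
and, analogously, substituting $v = f^{(1)}$ into the $\nabla$-relation $\beta_0 u_0 + \beta_1 u_1 + \beta_2 u_2 = 0$ produces
\begin{equation*}
  \frac{\beta_1}{\beta_0} \;=\; -\, \frac{e^{(j-1)} \wedge g^{(n-j)} \wedge f^{(1)}}{e^{(j)} \wedge g^{(n-j-1)} \wedge f^{(1)}}.
\end{equation*}

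By the defining equation $p_0 + p_1 + p'' = 0$ of the shear $S^{\Delta'}_{\nu_0' \nu_1'}$, this shear sends $u_0 \mapsto (\alpha_1/\alpha_0) u_1$, and similarly $S^{\nabla}_{\nu_1 \nu_0}$ sends $u_1 \mapsto (\beta_0/\beta_1) u_0$. Their composition therefore scales every $p_0 \in L_{\nu_0'}$ by the product $(\alpha_1/\alpha_0)(\beta_0/\beta_1)$, which, after multiplying out and comparing with the definition of $Z_j = \epsilon_j(E, G, F, F')$ from \S \ref{subsec:fock-goncharov-triangle-and-edge-invariants} (the two minus signs combine with the overall sign in $\epsilon_j$), simplifies precisely to $-Z_j$; this is the counterclockwise claim. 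The clockwise statement then follows because reversing the direction of traversal around the diamond replaces each shear by its inverse, giving total factor $-Z_j^{-1}$. The main bookkeeping hazard will be tracking wedge-product signs when identifying the generators in the $(E, F', G)$-ordering versus the $(G, F, E)$-ordering across the common edge, but these cancel symmetrically because $u_0$ and $u_1$ are defined the same way on the two sides.
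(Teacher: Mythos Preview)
Your argument is correct and is exactly the exterior-algebra computation that the paper has in mind when it says the proof is ``similar to that of Proposition~\ref{PROP:ELEMENTARY-SNAKE-MOVE-MATRICES}'' and defers to \cite{FockIHES06,GaiottoAnnHenriPoincare14} and \cite[\S 2.23]{DouglasThesis20}. Your evaluation trick (plugging $v=f^{\prime(1)}$ or $v=f^{(1)}$ into the coplanarity relation to kill the third term) yields the ratios cleanly, and the final identification $(\alpha_1/\alpha_0)(\beta_0/\beta_1) = -Z_j$ matches the definition of $\epsilon_j(E,G,F,F')$ on the nose; the sign bookkeeping you flag is a non-issue precisely because you use the \emph{same} generators $u_0,u_1$ on both sides of the edge.
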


\begin{proof}
	See \cite{FockIHES06, GaiottoAnnHenriPoincare14}.  Similar to that of Proposition \ref{PROP:ELEMENTARY-SNAKE-MOVE-MATRICES}; see also \cite[\S 2.23]{DouglasThesis20}.  
\end{proof}

	\begin{figure}[htb]
	\centering
	\includegraphics[width=.65\textwidth]{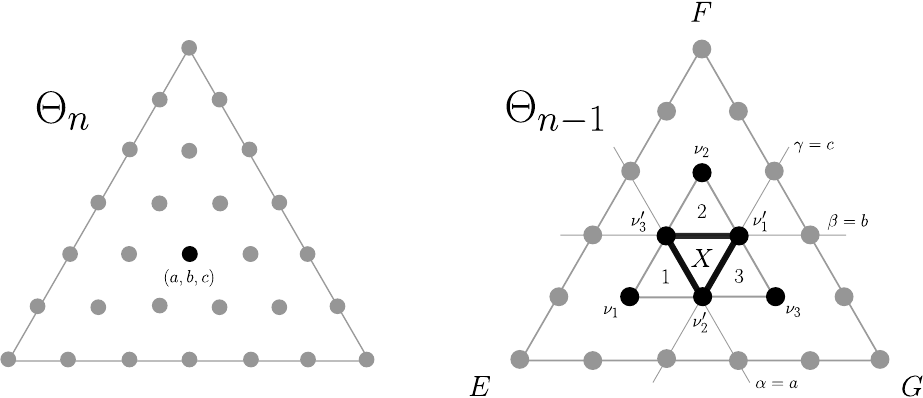}
	\caption{\small Triangle invariants as shears}
	\label{fig:diamond-move-proof}
\end{figure}

\begin{figure}[htb]
	\centering
	\includegraphics[width=.47\textwidth]{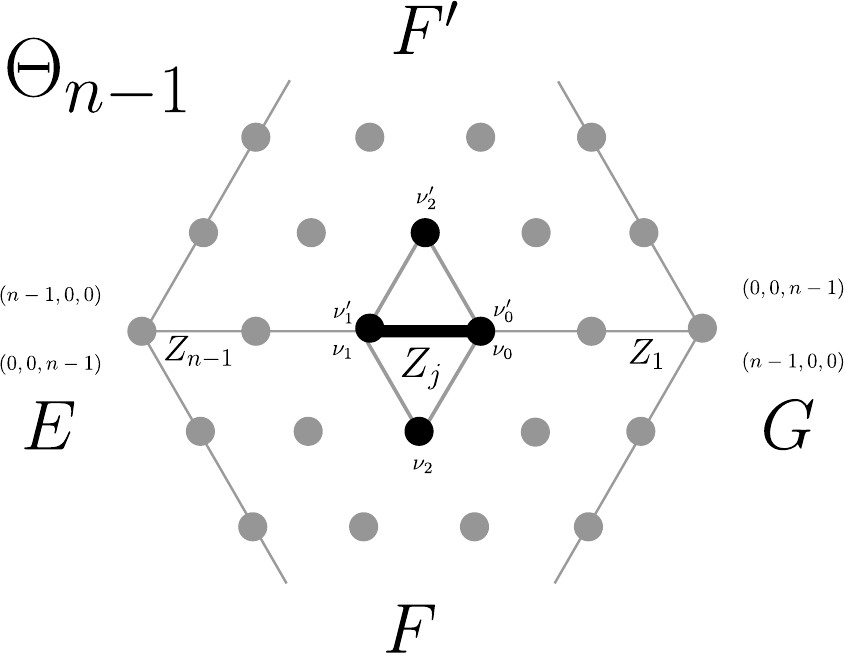}
	\caption{\small Edge invariants as shears}
	\label{fig:edge-invariants-as-shears}
\end{figure}

	\subsection{Classical left, right, and edge matrices}
	\label{sec:local-monodromy-matrices}

\begin{warning}
	In this subsection, we will consider snake-heads in the set of corner vertices $ \left\{ (n-1, 0, 0), (0, n-1, 0), (0, 0, n-1) \right\}$ other than $(n-1,0,0)$; see Remark~\ref{rem:snake-heads}.  
	
	We will also consider both (left) snakes and right snakes; see Remark \ref{rem:left-and-right-snakes-conventions}.
\end{warning}  
	
We begin the process of algebraizing the geometry discussed throughout this first~section.

	\subsubsection{Snake sequences}
	\label{sssec:snake-sequences}
	$ $

\textit{Left setting}:  Define a snake-head $\eta \in \Gamma(\Theta_{n-1})$ and two (left) snakes $\sigma^\mathrm{bot}$, $\sigma^\mathrm{top}$, called the bottom and top snakes, respectively, by
\begin{equation*}
	\eta = (n-1, 0, 0),
\quad	\sigma^\mathrm{bot}_k = (k-1, 0, n-k),
\quad	\sigma^\mathrm{top}_k = (k-1, n-k, 0)
	\quad  \left( k=1, \dots, n \right).  
\end{equation*}

\textit{Right setting}: Define $\eta$ and right snakes $\sigma^\mathrm{bot}$, $\sigma^\mathrm{top}$ by
\begin{equation*}
	\eta = (0, 0, n-1),
\quad	\sigma^\mathrm{bot}_k = (k-1, 0, n-k),
\quad	\sigma^\mathrm{top}_k = (0, k-1, n-k)
	\quad  \left( k=1, \dots, n \right).  
\end{equation*}

In either left or right setting, consider a sequence $\sigma^\mathrm{bot} = \sigma^1, \sigma^2, \cdots,\sigma^{N-1}, \sigma^N = \sigma^\mathrm{top}$ of snakes having the same snake-head $\eta$ as $\sigma^\mathrm{bot}$ and $\sigma^\mathrm{top}$, such that $(\sigma^\ell, \sigma^{\ell+1})$ is an adjacent pair; see Figure \ref{fig:n=5-sequences}. Note that this sequence of snakes is not in general unique.  For the $N$-many projective bases $[\mathscr{U}^\ell] = [\left\{ u^\ell_1, u^\ell_2, \dots, u^\ell_n \right\}]$ associated to the snakes $\sigma^\ell$, choose a common normalization $u_n^\ell :=u_n \in L_\eta$ (resp. $u_1^\ell :=u_1 \in L_\eta$), where the same $u_n$ (resp. $u_1$) is used for all $\ell$, when working in the left (resp. right) setting.  Then, the change of basis matrix $\vec{B}_{\mathscr{U}^\mathrm{bot} \to \mathscr{U}^\mathrm{top}}$ can be decomposed as (see \S \ref{sssec:diamond-and-tail-moves})
\begin{equation*}
\tag{$\ast$}
\label{eq:change-of-basis-left-and-right-matrices}
	\vec{B}_{\mathscr{U}^\mathrm{bot} \to \mathscr{U}^\mathrm{top}}
	=
	\vec{B}_{\mathscr{U}^1 \to \mathscr{U}^2} \vec{B}_{\mathscr{U}^2 \to \mathscr{U}^3} \cdots \vec{B}_{\mathscr{U}^{N-1} \to \mathscr{U}^N}
	\quad  \in \mathrm{GL}_n(\Complex).
\end{equation*}
Here, the matrices $\vec{B}_{\mathscr{U}^\ell \to \mathscr{U}^{\ell+1}}$ are computed as in Proposition \ref{PROP:ELEMENTARY-SNAKE-MOVE-MATRICES} (resp. Proposition \ref{prop:right-snake-moves}) in the left (resp. right) setting, and in particular are completely determined by the Fock-Goncharov triangle invariants $X_{abc} \in \Complex - \left\{ 0 \right\}$ associated to the internal vertices $(a, b, c) \in \mathrm{int}(\Theta_n)$ of the $n$-discrete triangle. 

Note that the matrix $\vec{B}_{\mathscr{U}^\mathrm{bot} \to \mathscr{U}^\mathrm{top}}$ is, by definition, independent of the choice of snake sequence $(\sigma^\ell)_\ell$.  For concreteness, throughout we make a preferred choice of such sequence, depending on whether we are in the left or right setting; see Figure \ref{fig:n=5-sequences}.

\begin{figure}[htb]
     \centering
     \begin{subfigure}[b]{0.48\textwidth}
         \centering
         \includegraphics[width=.75\textwidth]{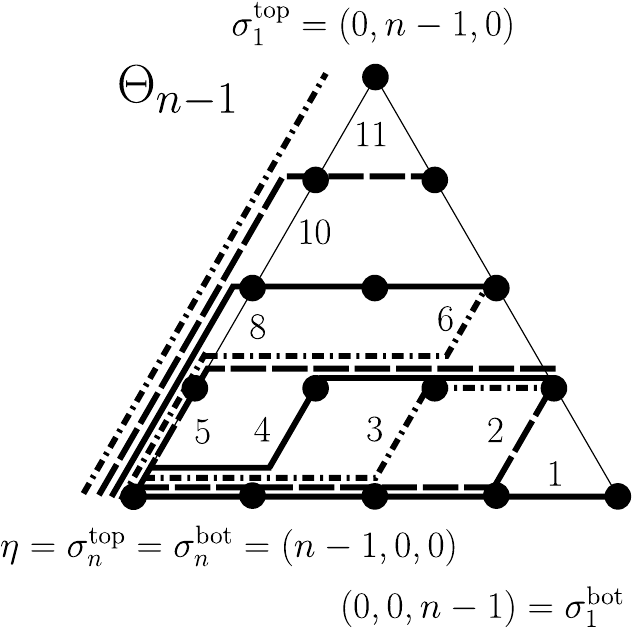}
         \caption{\small Left snake sequence (preferred choice)}
         \label{fig:sequence-of-snakes-n=5-left}
     \end{subfigure}     
\hfill
     \begin{subfigure}[b]{0.48\textwidth}
         \centering
         \includegraphics[width=.75\textwidth]{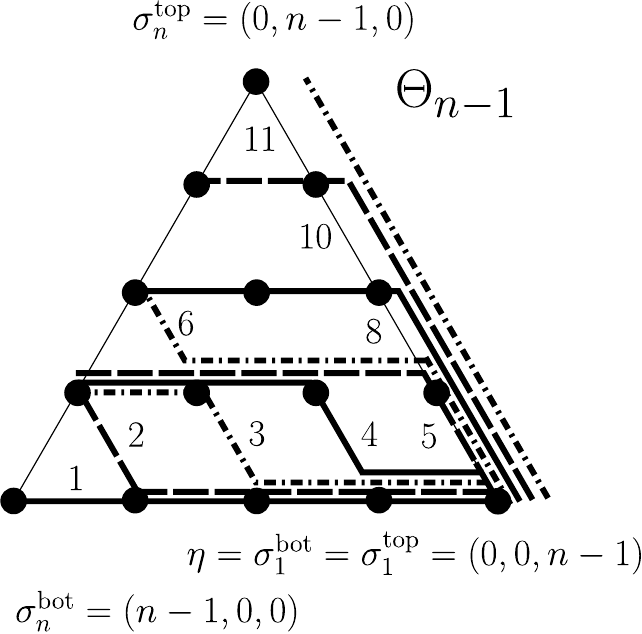}
         \caption{\small Right snake sequence (preferred choice)}
         \label{fig:sequence-of-snakes-n=5-right}
     \end{subfigure}
     	\caption{\small Classical snake sweep ($n=5$)}
        \label{fig:n=5-sequences}
\end{figure}

	\subsubsection{Algebraization}
	\label{sssec:algebraization}

	Let $\mathscr{A}$ be a commutative algebra (\S \ref{sec:theorem-3}).  For $i=1,2,\dots,(n-1)(n-2)/2$, let $X_i^{1/n} \in \mathscr{A}$ and put $X_i = (X_i^{1/n})^n$.   For $j=1,2,\dots,n-1$, let $Z_j^{1/n} \in \mathscr{A}$ and put $Z_j = (Z_j^{1/n})^n$.  Note $(n-1)(n-2)/2$ is the number of elements $(a,b,c) \in \mathrm{int}(\Theta_n)$, which we arbitrarily enumerate  from $i=1,2,\dots,(n-1)(n-2)/2$; see Figures \ref{subfig:left-matrix}, \ref{subfig:right-matrix}.  And note $n-1$ is the number of non-corner vertices of $\Theta_n$ lying on a single edge, which we enumerate from $j=1,2,\dots,n-1$ in the specific way as shown in Figure \ref{subfig:edge-matrix}.  Let $\vec{X}=(X_i)_i$ and $\vec{Z}=(Z_j)_j$ be the corresponding tuples of these elements of $\mathscr{A}$.  

As a notational convention, given a family $\vec{M}_
\ell \in \mathrm{M}_n(\mathscr{A})$ of $n \times n$ matrices, put
\begin{equation*}
\begin{split}
	\prod_{\ell=m}^p \vec{M}_\ell &= \vec{M}_{m} \vec{M}_{m+1} \cdots \vec{M}_p,
	\quad\quad  \prod_{\ell=p+1}^m \vec{M}_\ell = 1
	\quad\quad  \left(  m \leq p  \right),
\\	\coprod_{\ell=p}^m \vec{M}_\ell &= \vec{M}_p \vec{M}_{p-1} \cdots \vec{M}_m,
	\quad\quad  \coprod_{\ell=m-1}^p \vec{M}_\ell = 1
	\quad\quad  \left(  m \leq p  \right).
\end{split}
\end{equation*}

\begin{definition}

 The \textit{left matrix} $\vec{M}^\mathrm{left}(\vec{X})$ in $\mathrm{SL}_n(\mathscr{A})$ is defined by

\begin{equation*}
\label{eq:explicit-quantum-left-matrix}
\vec{M}^\mathrm{left}(\vec{X}) = 
\coprod_{k=n-1}^1 \left( \vec{S}^\mathrm{left}_1 \prod_{\ell=2}^k \vec{S}^\mathrm{left}_\ell\left(X_{(\ell-1)(n-k)(k-\ell+1)}\right) \right)
\quad
\in \mathrm{SL}_n(\mathscr{A})
\end{equation*}
where the matrix $\vec{S}_\ell^\mathrm{left}(X_{abc})$ is the $\ell$-th left-elementary matrix; see \S \ref{sec:theorem-3}. 

Similarly, the \textit{right matrix} $\vec{M}^\mathrm{right}(\vec{X}) $ in $ \mathrm{SL}_n(\mathscr{A})$ is defined by
\begin{equation*}
\label{eq:explicit-quantum-right-matrices}
\vec{M}^\mathrm{right}(\vec{X}) = 
 \coprod_{k=n-1}^1 \left( \vec{S}^\mathrm{right}_1 \prod_{\ell=2}^k \vec{S}^\mathrm{right}_\ell\left(X_{(k-\ell+1)(n-k)(\ell-1)}\right) \right)
\quad   \in \mathrm{SL}_n(\mathscr{A})
\end{equation*}
where the matrix $\vec{S}_\ell^\mathrm{right}(X_{abc})$ is the $\ell$-th right-elementary matrix; see \S \ref{sec:theorem-3}.  

Lastly, the \textit{edge matrix} $\vec{M}^\mathrm{edge}(\vec{Z}) $ in $ \mathrm{SL}_n(\mathscr{A})$ is defined by
\begin{equation*}
\label{eq:edge-matrix}
	\vec{M}^\mathrm{edge}(\vec{Z}) 
	= \prod_{\ell=1}^{n-1} \vec{S}^\mathrm{edge}_\ell(Z_\ell)
	\quad  \in \mathrm{SL}_n(\mathscr{A})
\end{equation*}
where the matrix $\vec{S}_\ell^\mathrm{edge}(Z_\ell)$ is the $\ell$-th  edge-elementary matrix; see \S \ref{sec:theorem-3}.  See Figure~\ref{fig:local-classical-matrices}.
\end{definition}

\begin{figure}[htb]
     \centering
     \begin{subfigure}{0.3\textwidth}
         \centering
         \includegraphics[width=.8\textwidth]{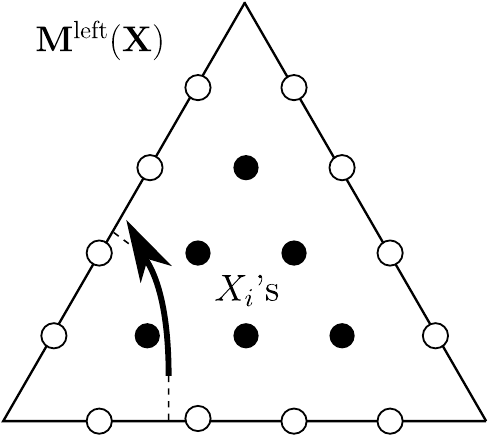}
         \caption{\small Left matrix}
         \label{subfig:left-matrix}
     \end{subfigure}     
\hfill
     \begin{subfigure}{0.3\textwidth}
         \centering
         \includegraphics[width=.8\textwidth]{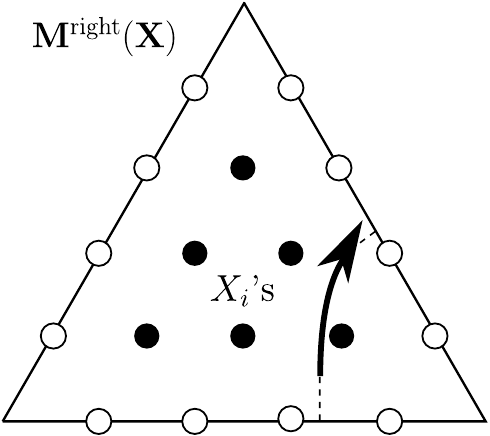}
         \caption{\small Right matrix}
         \label{subfig:right-matrix}
     \end{subfigure}
\hfill
     \begin{subfigure}{0.3\textwidth}
         \centering
         \includegraphics[width=.8\textwidth]{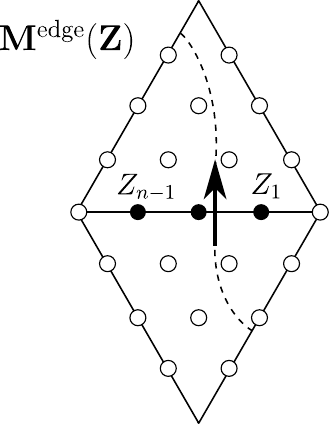}
         \caption{\small Edge matrix}
         \label{subfig:edge-matrix}
     \end{subfigure}
     	\caption{\small Classical matrices (viewed from the $\Theta_n$-perspective)}
     	\label{fig:local-classical-matrices}
\end{figure} 

\begin{remark}
	In the case where $\mathscr{A} = \Complex$ and the $X_i = \tau_{abc}(E, F, G)$ and $Z_j = \epsilon_j(E,G,F,F^\prime)$ in $\Complex - \left\{ 0 \right\}$ are the triangle and edge invariants (as in \S \ref{sssec:diamond-and-tail-moves}, \ref{sec:right-matrices}, \ref{sec:more-elementary-snake-moves}): then, the left and right matrices $\vec{M}^\mathrm{left}(\vec{X})$ and $\vec{M}^\mathrm{right}(\vec{X})$ are the normalized change of basis matrix $\vec{B}_{\mathscr{U}^\mathrm{bot} \to \mathscr{U}^\mathrm{top}}/\mathrm{Det}^{1/n}$ (see Eq. \eqref{eq:change-of-basis-left-and-right-matrices}) in the left and right setting, respectively, normalized to have determinant 1, and decomposed in terms of our preferred snake sequence (Figure \ref{fig:n=5-sequences}); and, the edge matrix $\vec{M}^\mathrm{edge}(\vec{Z})$ is the normalization $\vec{B}_{\mathscr{U} \to \mathscr{U}^\prime}/\mathrm{Det}^{1/n}$ of the change of basis matrix from Proposition \ref{prop:edge-move-change-of-basis-matrix}.  Note these normalizations require choosing $n$-roots of the invariants $X_i$ and $Z_j$.  
\end{remark}

	\section{Quantum matrices}
	\label{sec:quantum-matrices}

	Although we will not use explicitly the geometric results of the previous section, those results motivate the algebraic objects that are the main focus of the present work.  

	Throughout, let $q \in \Complex - \left\{ 0 \right\}$ and $\omega = q^{1/n^2}$ be a $n^2$-root of $q$.  Technically, also choose~$\omega^{1/2}$.

	\subsection{Quantum tori, matrix algebras, and the Weyl quantum ordering}
	\label{ssec:quantum-tori-matrix-algebras-and-weyl-ordering}

	\subsubsection{Quantum tori}
	\label{sssec:quantum-tori}

Let $\vec{P}$ (for ``Poisson'') be an integer $N \times N$ anti-symmetric matrix.  

\begin{definition}
The \emph{quantum torus (with $n$-roots)} $\mathscr{T}^\omega(\vec{P})$ associated to $\vec{P}$ is the quotient of the free algebra $\Complex\{ X_1^{1/n}, X_1^{-1/n}, \dots, X_{N}^{1/n}, X_{N}^{-1/n}\}$ in the indeterminates $X_i^{\pm 1/n}$ by the two-sided ideal generated by the relations
\begin{equation*}	
\label{q-commutation-relations}
	X_i^{1/n} X_j^{1/n} 
	= \omega^{\vec{P}_{ij} } X_j^{1/n} X_i^{1/n},\quad\quad\quad
	X_i^{1/n} X_i^{-{1/n}} = X_i^{-{1/n}} X_i^{1/n} = 1. 
\end{equation*}
Put $X_i^{\pm 1} = (X_i^{\pm 1/n})^n$.  We refer to the $X_i^{\pm 1/n}$ as \textit{generators}, and the $X_i$ as \textit{quantum coordinates}, or just \textit{coordinates}.  Define the subset of fractions
\begin{equation*}
	\Zinteger/n = \left\{ {m/n}; \quad m \in \Zinteger\right\} \quad \subset \Qrational.
\end{equation*}  
\end{definition}
Written in terms of the coordinates $X_i$ and the fractions $r \in \Zinteger/n$, we have the relations
\begin{equation*}
\label{eq:easier-to-read-q-commutation-relations}
	X_i^{r_i} X_j^{r_j} = q^{\vec{P}_{ij} r_i r_j} X_j^{r_j} X_i^{r_i}
	\quad\quad  (r_i, r_j \in \Zinteger/n).
\end{equation*}

	\subsubsection{Matrix algebras}
	\label{sssec:matrix-algebras}

\begin{definition}
\label{def:matrix-algebra}
	The \textit{matrix algebra} $\mathrm{M}_{n}(\mathscr{T})$ with coefficients in a possibly non-commutative algebra $\mathscr{T}$ is the vector space of $n \times n$ matrices, equipped with the usual multiplicative structure.  Namely, the product $\vec{M} \vec{N}$ of two matrices $\vec{M}$ and $\vec{N}$ is defined entrywise by 
	\begin{equation*}
	(\vec{M} \vec{N})_{ij} = \sum_{k=1}^{n} \vec{M}_{ik} \vec{N}_{kj} \quad \in \mathscr{T}
	\quad\quad  \left(  1 \leq i, j \leq n  \right).
\end{equation*}  
\end{definition}
Here, we use the usual convention that the entry $\vec{M}_{ij}$ of a matrix $\vec{M}$ is the entry in the $i$-th row and $j$-th column.  Note that the order of $\vec{M}_{ik}$ and $\vec{N}_{kj}$ in the above equation matters since these elements might not commute in $\mathscr{T}$.

	\subsubsection{Weyl quantum ordering}
	\label{sssec:weyl-ordering}

If $\mathscr{T}$ is a quantum torus, then there is a linear map
\begin{equation*}
\label{eq:weyl-ordering-eq-1}
	[-] \colon \Complex\{ X_1^{1/n}, X_1^{-1/n}, \dots, X_{N}^{1/n}, X_N^{-1/n}\} \to \mathscr{T}
\end{equation*}
from the free algebra to $\mathscr{T}$, called the \emph{Weyl quantum ordering}, defined by the property that a word
	$X_{i_1}^{r_1} X_{i_2}^{r_2} \cdots X_{i_k}^{r_k}$ for $r_a \in \Zinteger/n$ (note $i_a$ may equal $i_b$ if $a \neq b$)
 is mapped to
\begin{equation*}
\label{eq:weyl-quantum-ordering-def}
	[X_{i_1}^{r_1} X_{i_2}^{r_2} \cdots X_{i_k}^{r_k}] = \left( q^{-\frac{1}{2} \sum_{1 \leq a < b \leq k} \vec{P}_{i_a i_b} r_a r_b} \right) X_{i_1}^{r_1} X_{i_2}^{r_2} \cdots X_{i_k}^{r_k},
\end{equation*}
where on the right hand side we implicitly mean the equivalence class in $\mathscr{T}$.  Also, the empty word is mapped to $1$.  Note the Weyl ordering $[-]$ depends on the choice of $\omega^{1/2}$; see the beginning of \S \ref{sec:quantum-matrices}.

  The Weyl ordering is specially designed to satisfy the symmetry
\begin{equation*}
\label{eq:symmetric-property-of-the-Weyl-ordering}
	\left[X_{i_1}^{r_1} X_{i_2}^{r_2} \cdots X_{i_k}^{r_k}\right] 
	= \left[X_{i_{\sigma(1)}}^{r_{\sigma(1)}} X_{i_{\sigma(2)}}^{r_{\sigma(2)}} \cdots X_{i_{\sigma(k)}}^{r_{\sigma(k)}}\right]
\end{equation*}
for every permutation $\sigma$ of $\left\{1, \dots, k\right\}$; see \cite{BonahonGT11}.  Also, $[X_i^{1/n} X_i^{-1/n}]=1$.   Consequently, there is induced a linear map
\begin{equation*}
	[-] \colon \Complex[ X_1^{\pm 1/n}, \dots, X_{N}^{\pm 1/n}] \to \mathscr{T}
\end{equation*}
from the commutative Laurent polynomial algebra to $\mathscr{T}$.  This determines a linear map of matrix algebras
\begin{equation*}
	[-] \colon \mathrm{M}_{n}( \Complex[ X_1^{\pm 1/n}, \dots, X_{N}^{\pm 1/n}]) \to \mathrm{M}_{n}(\mathscr{T}), 
	\quad\quad 	[\vec{M}]_{ij} = [\vec{M}_{ij}] \quad \in \mathscr{T}.
\end{equation*}

	\subsection{Fock-Goncharov quantum torus for a triangle}	
	\label{sec:Fock-Goncharov-algebra-for-a-triangle}

Let $\Gamma(\Theta_n)$ denote the set of corner vertices $\Gamma(\Theta_n)=\left\{ (n, 0, 0), (0, n, 0), (0, 0, n) \right\}$ of the discrete triangle $\Theta_n$; see \S \ref{subsec:the-discrete-triangle}.  

Define a function 
\begin{equation*}
	\vec{P} : \left(\Theta_n - \Gamma(\Theta_n)\right)
	\times 
	\left(\Theta_n - \Gamma(\Theta_n)\right)
	 \to \left\{-2, -1, 0, 1, 2\right\}
\end{equation*}
using the \textit{quiver} with vertex set $\Theta_n - \Gamma(\Theta_n)$ illustrated in Figure \ref{fig:Fg-quiver}.  The function $\vec{P}$ is defined by sending the ordered tuple $(\nu_1, \nu_2)$ of vertices of $\Theta_n - \Gamma(\Theta_n)$ to $2$ (resp. $-2$) if there is a solid arrow pointing from $\nu_1$ to $\nu_2$ (resp. $\nu_2$ to $\nu_1$), to $1$ (resp. $-1$) if there is a dotted arrow pointing from $\nu_1$ to $\nu_2$ (resp. $\nu_2$ to $\nu_1$), and to $0$ if there is no arrow connecting $\nu_1$ and $\nu_2$.  Note that all of the small downward-facing triangles are oriented clockwise, and all of the small upward-facing triangles are oriented counterclockwise.  By labeling the vertices of $\Theta_n - \Gamma(\Theta_n)$ by their coordinates $(a, b, c)$ we may think of the function $\vec{P}$ as a $N \times N$ anti-symmetric matrix $\vec{P} = (\vec{P}_{abc, a^\prime b^\prime c^\prime})$ called the \textit{Poisson matrix} associated to the quiver.  Here, $N = 3(n-1) + (n-1)(n-2)/2$; compare~\S \ref{sssec:algebraization}.  

\begin{definition}
Define the \emph{Fock-Goncharov quantum torus} 
\begin{equation*}
	\mathscr{T}_n^\omega =  \mathbb{C}[X_1^{\pm 1/n}, X_2^{\pm 1/n}, \dots, X_{N}^{\pm 1/n}]^\omega
\end{equation*} 
associated to the discrete $n$-triangle $\Theta_n$ to be the quantum torus $\mathscr{T}^\omega(\vec{P})$ defined by the $N \times N$ Poisson matrix $\vec{P}$, with generators $X_i^{\pm 1/n} = X_{abc}^{\pm 1/n}$ for all $(a, b, c) \in \Theta_n - \Gamma(\Theta_n)$.  Note that when $q=\omega=1$ this recovers the classical Laurent polynomial algebra  $\mathscr{T}_n^1 = \mathbb{C}[X_1^{\pm 1/n}, X_2^{\pm 1/n}, \dots, X_{N}^{\pm 1/n}]$.  

As a notational convention, for $j = 1, 2, \dots, n-1$ we write $Z_{j}^{\pm 1/n}$ (resp. $Z_{j}^{\prime \pm 1/n}$ and $Z_j^{\prime\prime \pm 1/n}$) in place of $X_{j0(n-j)}^{\pm 1/n}$ (resp. $X_{j(n-j)0}^{\pm 1/n}$ and $X_{0j(n-j)}^{\pm 1/n}$); see Figure   \ref{fig:left-and-right-quantum-matrices}.  So, \textit{triangle-coordinates} will be denoted $X_i = X_{abc}$ for $(a, b, c) \in \mathrm{int}(\Theta_n)$ while \textit{edge-coordinates} will be denoted $Z_j, Z^\prime_j, Z^{\prime\prime}_j$.  
\end{definition}

	\subsection{Quantum left and right matrices}
	\label{ssec:quantum-left-and-right-matrices}

	\subsubsection{Weyl quantum ordering for the Fock-Goncharov quantum torus}
	\label{sssec:weyl-ordering-continued}

Let $\mathscr{T} =  \mathscr{T}_n^\omega$ be the Fock-Goncharov quantum torus (\S $\ref{sec:Fock-Goncharov-algebra-for-a-triangle}$).  Then the Weyl ordering $[-]$ of \S \ref{sssec:weyl-ordering} gives a map
\begin{equation*}
	[-] : 
	\mathrm{M}_n(\mathscr{T}_n^1) 
	 \longrightarrow \mathrm{M}_n(\mathscr{T}_n^\omega)
\end{equation*}
where we have used the identification $\mathscr{T}_n^1 = \mathbb{C}[X_1^{\pm 1/n}, X_2^{\pm 1/n}, \dots, X_{N}^{\pm 1/n}]$ discussed in~\S \ref{sec:Fock-Goncharov-algebra-for-a-triangle}.

	\subsubsection{Quantum left and right matrices}
	\label{sssec:quantum-left-and-right-matrices}

\begin{figure}[htb]
	\centering
	\includegraphics[scale=.88]{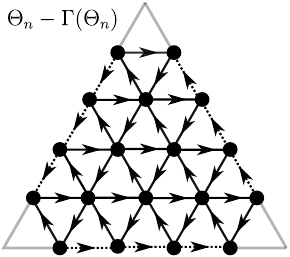}
	\caption{\small Quiver defining the Fock-Goncharov quantum torus}
	\label{fig:Fg-quiver}
\end{figure}	

\begin{figure}[htb]
	\centering
	\includegraphics[scale=.60]{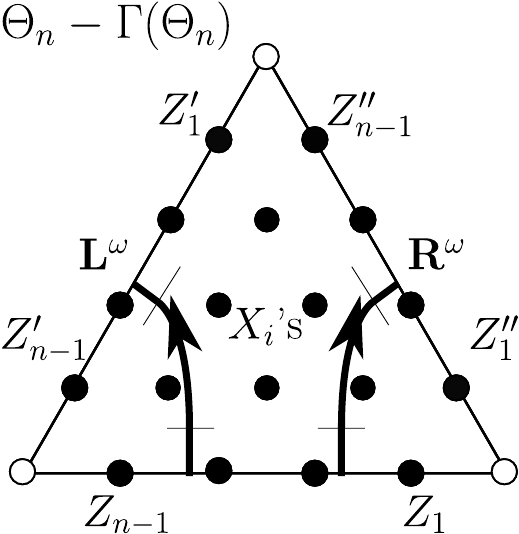}
	\caption{\small Quantum left and right matrices (compare Figure \ref{fig:local-classical-matrices}) }
	\label{fig:left-and-right-quantum-matrices}
\end{figure}	

For a commutative algebra $\mathscr{A}$, in \S \ref{sssec:algebraization} we defined the classical matrices $\vec{M}^\mathrm{left}(\vec{X})$, $\vec{M}^\mathrm{right}(\vec{X})$, and $\vec{M}^\mathrm{edge}(\vec{Z})$ in $\mathrm{SL}_n(\mathscr{A})$.  When $\mathscr{A} = \mathbb{C}[X_1^{\pm 1/n}, \dots, X_{N}^{\pm 1/n}] = \mathscr{T}_n^1$, we now use these matrices to define the primary objects of study.  

\begin{definition}
\label{def:left-and-right-quantum-matrices}
Put vectors $\vec{X}=(X_i)$, $\vec{Z}=(Z_j)$, $\vec{Z}^\prime=(Z_j^\prime)$, $\vec{Z}^{\prime\prime}=(Z_j^{\prime\prime})$ as in Figure \ref{fig:left-and-right-quantum-matrices}.  We define the \textit{quantum left matrix} $\vec{L}^\omega$ in $\mathrm{M}_n(\mathscr{T}_n^\omega)$ by the formula
\begin{equation*}
\label{eq:quantum-left-matrix}
	 \vec{L}^\omega 
	 = \vec{L}^\omega(\textbf{Z}, \textbf{X}, \vec{Z}^\prime)
	=  \left[  \vec{M}^\mathrm{edge}(\vec{Z}) \vec{M}^\mathrm{left}(\vec{X}) \vec{M}^\mathrm{edge}(\vec{Z}^\prime) \right]
	\quad  \in  \mathrm{M}_n\left(\mathscr{T}_n^\omega\right)
\end{equation*}
where we have applied the Weyl quantum ordering $[-]$ discussed in \S \ref{sssec:weyl-ordering-continued} to the product  $\vec{M}^\mathrm{edge}(\vec{Z}) \vec{M}^\mathrm{left}(\vec{X}) \vec{M}^\mathrm{edge}(\vec{Z}^\prime)$ of classical matrices in $\mathrm{M}_n(\mathscr{T}_n^1) $.  In other words, this  means that we apply the Weyl ordering to each entry of the classical matrix.  

Similarly, as in Figure \ref{fig:left-and-right-quantum-matrices}, we define the \textit{quantum right matrix} $\vec{R}^\omega$ in $\mathrm{M}_n(\mathscr{T}_n^\omega)$ by
\begin{equation*}
\label{eq:quantum-right-matrix}
	 \vec{R}^\omega 
	 = \vec{R}^\omega(\textbf{Z}, \textbf{X}, \vec{Z}^{\prime\prime})
	=  \left[  \vec{M}^\mathrm{edge}(\vec{Z}) \vec{M}^\mathrm{right}(\vec{X}) \vec{M}^\mathrm{edge}(\vec{Z}^{\prime\prime}) \right]
	\quad  \in  \mathrm{M}_n\left(\mathscr{T}_n^\omega\right).  
\end{equation*}
\end{definition}

	\subsection{Main result}

	\subsubsection{Quantum \texorpdfstring{$\mathrm{SL}_n$}{SLn} and its points}
	\label{sec:quantum-SLn}

Let $\mathscr{T}$ be a, possibly non-commutative, algebra.  

\begin{definition}
We say that a $2 \times 2$ matrix $\vec{M} = \left(\begin{smallmatrix} a & b \\ c & d \end{smallmatrix}\right)$ in $\mathrm{M}_2(\mathscr{T})$ is a \emph{$\mathscr{T}$-point of the quantum matrix algebra $\mathrm{M}_2^q$}, denoted $\vec{M} \in \mathrm{M}_2^q(\mathscr{T}) \subset \mathrm{M}_2(\mathscr{T})$, if 
\begin{equation*}
\tag{$\ast\ast$}
\label{eq:quantum-group-2by2-relations}
	ba = q ab, 
	\quad  dc = qcd, 
	\quad  ca = qac, 
	\quad  db = qbd, 
	\quad  bc = cb,
	\quad  da - ad = (q - q^{-1}) bc
	\quad  \in  \mathscr{T}.  
\end{equation*}

We say that a matrix $\vec{M} \in \mathrm{M}_2(\mathscr{T})$ is a \emph{$\mathscr{T}$-point of the quantum special linear group $\mathrm{SL}_2^q$}, denoted $\vec{M} \in \mathrm{SL}_2^q(\mathscr{T}) \subset \mathrm{M}_2^q(\mathscr{T}) \subset \mathrm{M}_2(\mathscr{T})$, if 
	$\vec{M} \in \mathrm{M}_2^q(\mathscr{T})$
and the \textit{quantum determinant}
\begin{equation*}
	\mathrm{Det}^q(\vec{M}) \quad =  ad - q^{-1}bc \quad = 1
	\quad  \in \mathscr{T}.
\end{equation*}
\end{definition}

These notions are also defined for $n \times n$ matrices, as follows.  

\begin{definition}
\label{def:pointsofmnq}
A matrix $\vec{M} \in \mathrm{M}_n(\mathscr{T})$ is a \emph{$\mathscr{T}$-point of the quantum matrix algebra $\mathrm{M}_n^q$}, denoted $\vec{M} \in \mathrm{M}_n^q(\mathscr{T}) \subset \mathrm{M}_n(\mathscr{T})$, if every $2 \times 2$ submatrix of $\vec{M}$ is a $\mathscr{T}$-point of $\mathrm{M}_2^q$.  That is,
\begin{gather*}		\label{defining-relations-of-quantum-coord-ring}
	\textbf{M}_{im} \textbf{M}_{ik} = q \textbf{M}_{ik} \textbf{M}_{im}, \quad\quad
	\textbf{M}_{jm} \textbf{M}_{im} = q \textbf{M}_{im} \textbf{M}_{jm},	\\		\notag
	\textbf{M}_{im} \textbf{M}_{jk} = \textbf{M}_{jk} \textbf{M}_{im}, \quad\quad
	\textbf{M}_{jm} \textbf{M}_{ik} - \textbf{M}_{ik} \textbf{M}_{jm} = (q - q^{-1}) \textbf{M}_{im} \textbf{M}_{jk},
\end{gather*}	
for all $i < j$ and $k < m$, where $1 \leq i, j, k, m \leq n$.  

The \textit{quantum determinant} $\mathrm{Det}^q(\vec{M}) \in \mathscr{T}$ of a matrix $\vec{M} \in \mathrm{M}_n(\mathscr{T})$ is
\begin{equation*}
	\mathrm{Det}^q(\vec{M})=\sum_{\sigma\in\mathfrak{S}_n} (-q^{-1})^{\ell(\sigma)} \vec{M}_{1\sigma(1)} \vec{M}_{2\sigma(2)} \cdots \vec{M}_{n\sigma(n)}  
\end{equation*}
where the length $\ell(\sigma)$ of the permutation $\sigma$ is the minimum number of factors appearing in a decomposition of $\sigma$ as a product of adjacent transpositions $(i, i+1)$; see, for example, \cite[Chapter I.2]{Brown02}.

A matrix $\vec{M} \in \mathrm{M}_n(\mathscr{T})$ is a \emph{$\mathscr{T}$-point of the quantum special linear group $\mathrm{SL}_n^q$}, denoted $\vec{M} \in \mathrm{SL}_n^q(\mathscr{T}) \subset \mathrm{M}_n^q(\mathscr{T}) \subset \mathrm{M}_n(\mathscr{T})$, if both $\vec{M} \in \mathrm{M}_n^q(\mathscr{T})$ and  $\mathrm{Det}^q(\vec{M}) = 1$.  
\end{definition}

\begin{remark}\label{rem:remarksaboutquantummatrices}  $ $
\begin{enumerate}
\item\label{item:simplifiedqdet}  
It follows from the definitions that if a $\mathscr{T}$-point $\vec{M} \in \mathrm{M}_n^q(\mathscr{T}) \subset \mathrm{M}_n(\mathscr{T})$ is a triangular matrix, then the diagonal entries $\vec{M}_{ii} \in \mathscr{T}$ commute, and $\mathrm{Det}^q(\vec{M}) = \prod_i \vec{M}_{ii} \in \mathscr{T}$.  
\item	 The subsets $\mathrm{M}_n^q(\mathscr{T}) \subset \mathrm{M}_n(\mathscr{T})$ and $\mathrm{SL}_n^q(\mathscr{T}) \subset \mathrm{M}_n^q(\mathscr{T})$ are generally not closed under matrix multiplication (see, however, the sketch of proof below for a relaxed~property).  
\item  More abstractly, the \textit{quantum special linear group} $\mathrm{SL}_n^q$ is the non-commutative algebra defined as the quotient of the free algebra on generators $\vec{m}_{ij}$ ($1\leq i,j\leq n$) subject to the four relations appearing in Definition \ref{def:pointsofmnq} (with $\vec{M}_{ij}$ replaced by $\vec{m}_{ij}$) plus the relation $\mathrm{Det}^q(\vec{m})=1$; see, for example, \cite[Chapter I.2]{Brown02}.  Note then that a $\mathscr{T}$-point $\vec{M}$ of $\mathrm{SL}_n^q$ is equivalent to an algebra homomorphism $\varphi(\vec{M}):\mathrm{SL}_n^q\to\mathscr{T}$ defined by the property that $\varphi(\vec{M})(\vec{m}_{ij})=\vec{M}_{ij}$ for all $1 \leq i,j \leq n$.
\end{enumerate}
\end{remark}

	\subsubsection{Main result}
	\label{ssec:main-result}

Take $\mathscr{T} = \mathscr{T}_n^\omega$ to be the Fock-Goncharov quantum torus for the discrete $n$-triangle $\Theta_n$; see \S \ref{sec:Fock-Goncharov-algebra-for-a-triangle}.  Let $\vec{L}^\omega$ and $\vec{R}^\omega$ in $\mathrm{M}_n(\mathscr{T}_n^\omega)$ be the quantum left and right matrices, respectively, as defined in Definition \ref{def:left-and-right-quantum-matrices}.   

\begin{theorem} 
\label{thm:first-theorem}
	The quantum left and right matrices
	\begin{equation*}
		\vec{L}^\omega=\vec{L}^\omega(\vec{Z}, \vec{X}, \vec{Z}^\prime), \quad \vec{R}^\omega=\vec{R}^\omega(\vec{Z}, \vec{X}, \vec{Z}^{\prime\prime})	 
		\quad  \in  \mathrm{M}_n(\mathscr{T}_n^\omega)
	\end{equation*}
	 are $\mathscr{T}_n^\omega$-points of the quantum special linear group $\mathrm{SL}_n^q$.  That is, $\vec{L}^\omega, \vec{R}^\omega \in \mathrm{SL}_n^q(\mathscr{T}_n^\omega) \subset~\mathrm{M}_n(\mathscr{T}_n^\omega)$.
\end{theorem}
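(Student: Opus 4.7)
The plan is to prove the theorem by directly verifying the defining relations of $\mathrm{SL}_n^q$ on the entries of $\vec{L}^\omega$; the case of $\vec{R}^\omega$ will follow by an analogous argument. A first simplification is that the classical matrix $\vec{M}^{\mathrm{edge}}(Z_j) \vec{M}^{\mathrm{left}}(X_i) \vec{M}^{\mathrm{edge}}(Z'_j)$ is the product of upper-triangular (left-shearing) and diagonal (edge-shearing) factors, hence is itself upper-triangular. Consequently $\vec{L}^\omega$ is upper-triangular, and for any $2\times 2$ submatrix $\left(\begin{smallmatrix} a & b \\ c & d \end{smallmatrix}\right)$ of $\vec{L}^\omega$ with $c=0$, the six $\mathrm{SL}_n^q$ relations collapse to the three nontrivial conditions $ba = qab$, $db = qbd$, and $da = ad$, together with the quantum determinant condition $\mathrm{Det}^q(\vec{L}^\omega) = \prod_i (\vec{L}^\omega)_{ii} = 1$, which by the triangular version of the quantum determinant formula reduces to a product of diagonal entries.

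The first substantive step is to compute each entry $(\vec{L}^\omega)_{ij}$ with $i \leq j$ explicitly as a Weyl-ordered Laurent monomial in the generators $X_{abc}^{1/n}, Z_j^{1/n}, Z_j'^{1/n}$ of $\mathscr{T}_n^\omega$. This is a direct combinatorial calculation: expanding the classical product yields a sum of monomials indexed by paths through the sequence of elementary shearing matrices from the snake sweep of Figure \ref{fig:sequence-of-snakes-n=5-left}, and most such paths vanish due to the sparse structure of the shearing matrices. Applying the Weyl ordering then attaches a specific $q$-power to each surviving monomial, determined by the Poisson matrix $\vec{P}$ read off the Fock-Goncharov quiver of Figure \ref{fig:Fg-quiver}. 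The second step is to verify, for each relevant pair $(i,j),(k,m)$ of row/column indices, the three remaining $\mathrm{SL}_n^q$ relations. Each such relation reduces, using the $q$-commutations $X^r Y^s = q^{\vec{P}_{XY} rs} Y^s X^r$ in $\mathscr{T}_n^\omega$, to an identity among integer $q$-exponents computed from sums of entries of $\vec{P}$ along specific subsets of quiver vertices.

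The principal obstacle is establishing these $q$-exponent identities, which amounts to verifying that the Fock-Goncharov quiver is exactly the right combinatorial data to encode quantum $\mathrm{SL}_n$ in the snake framework. I expect the cleanest organization of this verification is inductively via a quantum counterpart of the snake move technology of \S\ref{subsec:elementary-snake-moves}. Specifically, one proves by induction on the length of the snake sweep that (i) each intermediate Weyl-ordered product remains a $\mathscr{T}_n^\omega$-point of $\mathrm{SL}_n^q$, and (ii) the entries of each newly appended elementary factor $q$-commute with the entries of the already-accumulated product in a pattern compatible with multiplication in $\mathrm{SL}_n^q$ (in the spirit of an FRT-type relation $R\vec{T}_1\vec{T}_2 = \vec{T}_2\vec{T}_1 R$). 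The base case is a single diamond, tail, or edge move; the inductive step is a localized quiver computation using only the arrows incident to the vertex being added. The quantum determinant identity is essentially automatic from the triangularity, since each elementary shearing matrix has classical determinant $1$ and the Weyl ordering preserves this along the diagonal.
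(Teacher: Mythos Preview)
Your upper-triangularity observation is correct and does reduce the relations to $ba=qab$, $db=qbd$, $da=ad$, but the inductive scheme you propose has a genuine gap. The base case already fails: a single Weyl-ordered shearing matrix $[\vec{S}^\mathrm{left}_k(X)]$, viewed in $\mathrm{M}_n(\mathscr{T}_n^\omega)$, is \emph{not} a point of $\mathrm{SL}_n^q$ for $q\neq 1$, since its $(k,k)$ and $(k,k+1)$ entries are both $X^{-(k-1)/n}$ and hence commute rather than $q$-commute (and likewise for the tail move $\vec{S}^\mathrm{left}_1$). More seriously, the inductive step (ii) asks that the entries of each new elementary factor $q$-commute with the accumulated product in precisely the FRT pattern; but in $\mathscr{T}_n^\omega$ the entries of consecutive shearing factors $q$-commute according to the Fock--Goncharov quiver, and there is no a priori reason this is exactly the pattern that preserves the $\mathrm{SL}_n^q$ relations under multiplication. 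Verifying that it is would be essentially the full theorem again, now repeated at every step of the sweep.

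The paper's proof supplies the missing mechanism. It introduces auxiliary \emph{snake-move quantum tori} $\mathscr{S}^\omega_j$, one per adjacent snake pair, with extra generators $z_k, z'_k$ (the ``split vertebrae''), and checks directly (Proposition~\ref{lem:def-of-quantum-elementary-matrix-left}) that the sandwiched matrix $\vec{M}_j = [\,(\prod_k \vec{S}^\mathrm{edge}_k(z_k))\,\vec{S}^\mathrm{left}_j(x_{j-1})\,(\prod_k \vec{S}^\mathrm{edge}_k(z'_k))\,]$ \emph{is} a point of $\mathrm{SL}_n^q$ over $\mathscr{S}^\omega_j$; the flanking edge factors in the new variables are exactly what repairs the failure you would see for a bare $\vec{S}^\mathrm{left}_j$. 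The key structural step is an algebra embedding $\mathscr{T}_L \hookrightarrow \bigotimes_i \mathscr{S}^\omega_{j_i}$ of a suitable subalgebra of $\mathscr{T}_n^\omega$ into the tensor product, so that the $\vec{M}_{j_i}$ now live in \emph{mutually commuting} tensor factors. The standard bialgebra fact (if $\vec{M}',\vec{M}''$ are $\mathrm{SL}_n^q$-points over commuting subalgebras then so is $\vec{M}'\vec{M}''$) then gives $\prod_i \vec{M}_{j_i} \in \mathrm{SL}_n^q$, and a commutation argument (Lemma~\ref{lem:technical-lemma}) identifies this product with the image of $\vec{L}^\omega$. Your outline has the right inductive shape but omits both the auxiliary algebras and the embedding, which are where the work actually happens.
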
	

The proof, provided in \S \ref{sec:snake-move-algebras}, uses a quantum version of Fock-Goncharov snakes (\S \ref{sec:fock-goncharov-snakes}).  

\begin{proof}[Sketch of proof (see {\upshape\S \ref{sec:snake-move-algebras}} for more details)]
	In the case $n=2$, this is an enjoyable calculation.  When $n \geq 3$, the argument hinges on the following well-known fact (see, for example, \cite[Proposition IV.3.4 and Section IV.10]{Kassel95}):  If $\mathscr{T}$ is an algebra with subalgebras $\mathscr{T}^\prime, \mathscr{T}^{\prime\prime} \subset \mathscr{T}$ that commute in the sense that $a' a'' = a'' a'$ for all $a' \in \mathscr{T}^\prime$ and $a'' \in \mathscr{T}^{\prime\prime}$, and if $\vec{M}' \in \mathrm{M}_n(\mathscr{T}^\prime) \subset \mathrm{M}_n(\mathscr{T})$ and $\vec{M}'' \in \mathrm{M}_n(\mathscr{T}^{\prime\prime}) \subset \mathrm{M}_n(\mathscr{T})$ are $\mathscr{T}$-points of $\mathrm{SL}_n^q$, then the matrix product (\S\ref{def:matrix-algebra}) $\vec{M}' \vec{M}'' \in \mathrm{M}_n(\mathscr{T}^\prime \mathscr{T}^{\prime\prime}) \subset \mathrm{M}_n(\mathscr{T})$ is also a $\mathscr{T}$-point of~$\mathrm{SL}_n^q$.  

Put $\vec{M}_\mathrm{FG} := \vec{L}^\omega$, the quantum left matrix, say.  The proof is the same for the quantum right matrix.  See Definition \ref{def:left-and-right-quantum-matrices}.  The strategy is to see $\vec{M}_\mathrm{FG} \in \mathrm{M}_n(\mathscr{T}_n^\omega)$ as the product of simpler matrices, over mutually-commuting subalgebras, that are themselves points of $\mathrm{SL}_n^q$.  

More precisely, for a fixed sequence of adjacent snakes $\sigma^\mathrm{bot} = \sigma^1, \sigma^2, \dots, \sigma^N = \sigma^\mathrm{top}$ moving left across the triangle from the bottom edge to the top-left edge, we will define for each $i=1, \dots, N-1$ an auxiliary algebra $\mathscr{S}^\omega_{j_i}$ called a \textit{snake-move algebra}, $j_i \in \{ 1, \dots, n-1 \}$, corresponding to the adjacent snake pair $(\sigma^i, \sigma^{i+1})$.  As a technical step, there is a distinguished  subalgebra $\mathscr{T}_L \subset \mathscr{T}_n^\omega$ satisfying $\vec{M}_\mathrm{FG} \in \mathrm{M}_n(\mathscr{T}_L) \subset \mathrm{M}_n(\mathscr{T}_n^\omega)$.  We construct an algebra embedding $\mathscr{T}_L \hookrightarrow \bigotimes_i \mathscr{S}^\omega_{j_i}$.  Through this embedding, we may view $\vec{M}_\mathrm{FG} \in \mathrm{M}_n(\mathscr{T}_L) \subset \mathrm{M}_n(\bigotimes_i \mathscr{S}^\omega_{j_i})$.  

Following, we construct (Proposition \ref{lem:def-of-quantum-elementary-matrix-left}), for each $i$, a matrix $\vec{M}_{j_i} \in \mathrm{M}_n(\mathscr{S}^\omega_{j_i}) \subset \mathrm{M}_n(\bigotimes_i \mathscr{S}^\omega_{j_i})$ such that $\vec{M}_{j_i}$ is a $\mathscr{S}^\omega_{j_i}$-point of $\mathrm{SL}_n^q$, in other words $\vec{M}_{j_i} \in \mathrm{SL}_n^q(\mathscr{S}^\omega_{j_i}) \subset \mathrm{SL}_n^q(\bigotimes_i \mathscr{S}^\omega_{{j_i}})$.  Since by definition the subalgebras $\mathscr{S}^\omega_{j_i}, \mathscr{S}^\omega_{j_{i^\prime}} \subset \bigotimes_i \mathscr{S}^\omega_{j_i}$ commute if $i \neq i'$, as they constitute different tensor factors of $\bigotimes_i \mathscr{S}^\omega_{j_i}$, it follows from the essential fact mentioned above that $\vec{M} := \vec{M}_{j_1} \vec{M}_{j_2} \cdots \vec{M}_{j_{N-1}} \in \mathrm{M}_n(\bigotimes_i \mathscr{S}^\omega_{j_i})$ is a $(\bigotimes_i \mathscr{S}^\omega_{j_i})$-point of $\mathrm{SL}_n^q$, in other words $\vec{M} \in \mathrm{SL}_n^q(\bigotimes_i \mathscr{S}^\omega_{j_i})$.  

Since this matrix product $\vec{M}$, as well as the quantum left matrix $\vec{M}_\mathrm{FG}$, are being viewed as elements of $\mathrm{M}_n(\bigotimes_i \mathscr{S}^\omega_{j_i})$, it makes sense to ask whether $\vec{M}_\mathrm{FG} \overset{?}{=} \vec{M} \in \mathrm{M}_n(\bigotimes_i \mathscr{S}^\omega_{j_i})$.  We show this is true, implying that $\vec{M}_\mathrm{FG} \in \mathrm{SL}_n^q(\bigotimes_i \mathscr{S}^\omega_{j_i})$.  Since $\vec{M}_\mathrm{FG} \in \mathrm{M}_n(\mathscr{T}_L) \subset \mathrm{M}_n(\bigotimes_i \mathscr{S}^\omega_{j_i})$, we conclude that $\vec{M}_\mathrm{FG}$ is in $\mathrm{SL}_n^q(\mathscr{T}_L) \subset \mathrm{SL}_n^q(\mathscr{T}_n^\omega)$.
\end{proof}

	\subsection{Example}
	\label{sec:concrete-formulas}

Consider the case $n=4$; see Figure \ref{fig:n=4-left-and-right-matrices-example}.  On the right hand side we show the quiver defining the commutation relations in the quantum torus $\mathscr{T}_4^\omega$, recalling Figure \ref{fig:Fg-quiver}, but viewed in $\Theta_{n-1}$.  Note that there is a one-to-one correspondence between points $(a,b,c) \in \mathrm{int}(\Theta_n)$ and small downward-facing triangles inside $\Theta_{n-1}$, as shown Figure \ref{fig:n=4-left-and-right-matrices-example}.  In particular, to each downward-facing triangle there is associated a triangle-coordinate~$X_i$. 

 Some sample commutation relations in $\mathscr{T}_4^\omega$ are:
\begin{equation*}
	X_3 Z^{\prime\prime}_2 = q^2 X_3 Z^{\prime\prime}_2,
	\quad\quad
	X_3 X_1 = q^{-2} X_1 X_3,
	\quad\quad
	Z_3 Z_2 = q Z_2 Z_3,
	\quad\quad
	Z_3 Z^\prime_3 = q^2 Z^\prime_3 Z_3.
\end{equation*}

\begin{figure}[htb]
     \centering
     \begin{subfigure}[b]{0.465\textwidth}
         \centering
         \includegraphics[width=.55\textwidth]{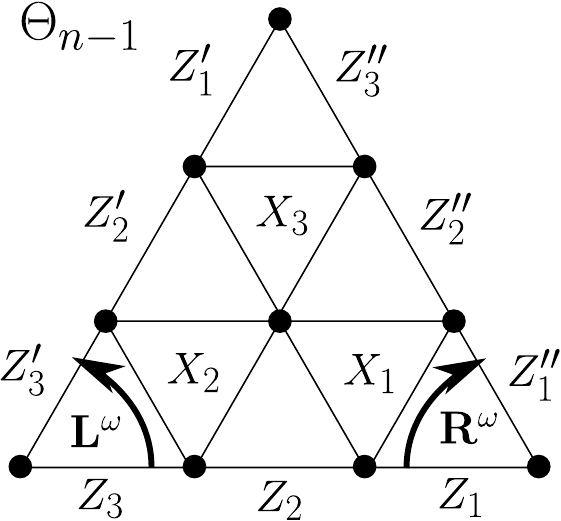}
         \caption{\small Left and right matrices}
         \label{fig:n=4-left-and-right-matrices-example-subfigure}
     \end{subfigure}     
\hfill
     \begin{subfigure}[b]{0.465\textwidth}
         \centering
         \includegraphics[width=.6\textwidth]{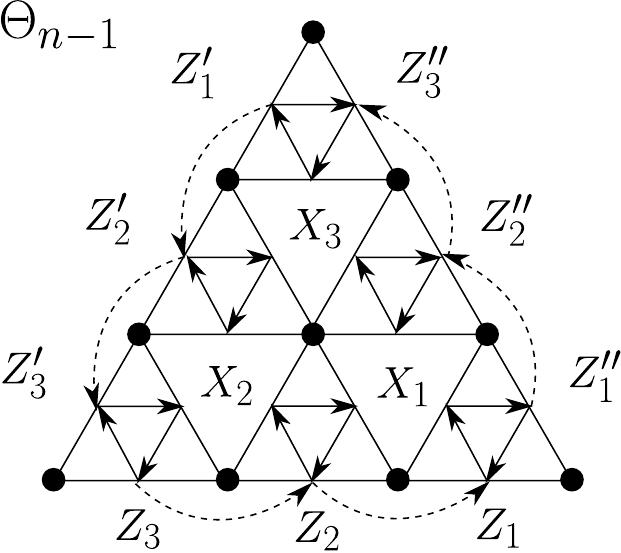}
         \caption{\small Quiver}
         \label{fig:n=4-quiver}
     \end{subfigure}
     	\caption{\small Quantum matrices and quantum torus ($n=4$)}
        \label{fig:n=4-left-and-right-matrices-example}
\end{figure}

Then, the quantum left and right matrices $\vec{L}^\omega$ and $\vec{R}^\omega$ are computed as
\begin{equation*}
\begin{split}
	\vec{L}^\omega = 
	&\Bigg[
Z_1^{-\frac{1}{4}} Z_2^{-\frac{2}{4}} Z_3^{-\frac{3}{4}}	
\left(\begin{smallmatrix}
Z_1 Z_2 Z_3&&&\\
&Z_2 Z_3&&\\
&&Z_3&\\
&&&1
\end{smallmatrix}\right)
\left(\begin{smallmatrix}
1&1&&\\
&1&&\\
&&1&\\
&&&1
\end{smallmatrix}\right)
X_1^{-\frac{1}{4}}
\left(\begin{smallmatrix}
X_1&&&\\
&1&1&\\
&&1&\\
&&&1
\end{smallmatrix}\right)
X_2^{-\frac{2}{4}}
\left(\begin{smallmatrix}
X_2&&&\\
&X_2&&\\
&&1&1\\
&&&1
\end{smallmatrix}\right)
\\
&\left(\begin{smallmatrix}
1&1&&\\
&1&&\\
&&1&\\
&&&1
\end{smallmatrix}\right)
X_3^{-\frac{1}{4}}
\left(\begin{smallmatrix}
X_3&&&\\
&1&1&\\
&&1&\\
&&&1
\end{smallmatrix}\right)
\left(\begin{smallmatrix}
1&1&&\\
&1&&\\
&&1&\\
&&&1
\end{smallmatrix}\right)
Z_1^{\prime-\frac{1}{4}} Z_2^{\prime-\frac{2}{4}} Z_3^{\prime-\frac{3}{4}}	
\left(\begin{smallmatrix}
Z^\prime_1 Z^\prime_2 Z^\prime_3&&&\\
&Z^\prime_2 Z^\prime_3&&\\
&&Z^\prime_3&\\
&&&1
\end{smallmatrix}\right)
	\Bigg];
\end{split}
\end{equation*}
and
\begin{equation*}
\label{eq:example-4by4-right-matrix}
\begin{split}
	\vec{R}^\omega = 
	&\Bigg[
Z_1^{-\frac{1}{4}} Z_2^{-\frac{2}{4}} Z_3^{-\frac{3}{4}}	
\left(\begin{smallmatrix}
Z_1 Z_2 Z_3&&&\\
&Z_2 Z_3&&\\
&&Z_3&\\
&&&1
\end{smallmatrix}\right)
\left(\begin{smallmatrix}
1&&&\\
&1&&\\
&&1&\\
&&1&1
\end{smallmatrix}\right)
X_2^{+\frac{1}{4}}
\left(\begin{smallmatrix}
1&&&\\
&1&&\\
&1&1&\\
&&&X_2^{-1}
\end{smallmatrix}\right)
X_1^{+\frac{2}{4}}
\left(\begin{smallmatrix}
1&&&\\
1&1&&\\
&&X_1^{-1}&\\
&&&X_1^{-1}
\end{smallmatrix}\right)
\\
&\left(\begin{smallmatrix}
1&&&\\
&1&&\\
&&1&\\
&&1&1
\end{smallmatrix}\right)
X_3^{+\frac{1}{4}}
\left(\begin{smallmatrix}
1&&&\\
&1&&\\
&1&1&\\
&&&X_3^{-1}
\end{smallmatrix}\right)
\left(\begin{smallmatrix}
1&&&\\
&1&&\\
&&1&\\
&&1&1
\end{smallmatrix}\right)
Z_1^{\prime\prime-\frac{1}{4}} Z_2^{\prime\prime-\frac{2}{4}} Z_3^{\prime\prime-\frac{3}{4}}	
\left(\begin{smallmatrix}
Z^{\prime\prime}_1 Z^{\prime\prime}_2 Z_3^{\prime\prime}&&&\\
&Z^{\prime\prime}_2 Z^{\prime\prime}_3&&\\
&&Z^{\prime\prime}_3&\\
&&&1
\end{smallmatrix}\right)
	\Bigg].
\end{split}
\end{equation*}

Theorem \ref{thm:first-theorem} says that these two matrices are elements of $\mathrm{SL}_4^q(\mathscr{T}_4^\omega)$.  For instance, the entries $a,b,c,d$ of the $2 \times 2$ submatrix (arranged as a $4 \times 1$ matrix) of $\vec{L}^\omega$
\begin{equation*}
\label{eq:quantum-left-matrix-2x2-sub-matrix}
\begin{split}
	\left(\begin{smallmatrix}
		a\\b\\
		c\\d
	\end{smallmatrix}\right)
	=
	\left(\begin{smallmatrix}
		\vec{L}^\omega_{13}\\\vec{L}^\omega_{14}\\
		\vec{L}^\omega_{23}\\\vec{L}^\omega_{24}
	\end{smallmatrix}\right)
	=
	\left(\begin{smallmatrix}
		[Z_3^\frac{1}{4} Z_2^\frac{2}{4} Z_1^\frac{3}{4} 
		Z_3^{\prime\frac{1}{4}} Z_2^{\prime-\frac{2}{4}} Z_1^{\prime-\frac{1}{4}} 
		X_1^{-\frac{1}{4}} X_2^{-\frac{2}{4}} X_3^{-\frac{1}{4}}]
	+	
		[Z_3^\frac{1}{4} Z_2^\frac{2}{4} Z_1^\frac{3}{4} 
		Z_3^{\prime\frac{1}{4}} Z_2^{\prime-\frac{2}{4}} Z_1^{\prime-\frac{1}{4}} 
		X_1^{-\frac{1}{4}} X_2^\frac{2}{4} X_3^{-\frac{1}{4}}]+
	\\+	
		[Z_3^\frac{1}{4} Z_2^\frac{2}{4} Z_1^\frac{3}{4} 
		Z_3^{\prime\frac{1}{4}} Z_2^{\prime-\frac{2}{4}} Z_1^{\prime-\frac{1}{4}} 
		X_1^\frac{3}{4} X_2^\frac{2}{4} X_3^{-\frac{1}{4}}]
\\
		[Z_3^\frac{1}{4} Z_2^\frac{2}{4} Z_1^\frac{3}{4} 
		Z_3^{\prime-\frac{3}{4}} Z_2^{\prime-\frac{2}{4}} Z_1^{\prime-\frac{1}{4}} 
		X_1^{-\frac{1}{4}} X_2^{-\frac{2}{4}} X_3^{-\frac{1}{4}}]
\\	
		[Z_3^\frac{1}{4} Z_2^\frac{2}{4} Z_1^{-\frac{1}{4}} 
		Z_3^{\prime\frac{1}{4}} Z_2^{\prime-\frac{2}{4}} Z_1^{\prime-\frac{1}{4}} 
		X_1^{-\frac{1}{4}} X_2^{-\frac{2}{4}} X_3^{-\frac{1}{4}}]	
	+	
		[Z_3^\frac{1}{4} Z_2^\frac{2}{4} Z_1^{-\frac{1}{4}} 
		Z_3^{\prime\frac{1}{4}} Z_2^{\prime-\frac{2}{4}} Z_1^{\prime-\frac{1}{4}} 
		X_1^{-\frac{1}{4}} X_2^\frac{2}{4} X_3^{-\frac{1}{4}}]
\\
		[Z_3^\frac{1}{4} Z_2^\frac{2}{4} Z_1^{-\frac{1}{4}} 
		Z_3^{\prime-\frac{3}{4}} Z_2^{\prime-\frac{2}{4}} Z_1^{\prime-\frac{1}{4}} 
		X_1^{-\frac{1}{4}} X_2^{-\frac{2}{4}} X_3^{-\frac{1}{4}}]	
	\end{smallmatrix}\right)
\end{split}
\end{equation*}
satisfy Equation \eqref{eq:quantum-group-2by2-relations}.  For a computer demonstration of this, see \cite[Appendix B]{DouglasArxiv21}.  We also verify in that appendix that Equation \eqref{eq:quantum-group-2by2-relations} is satisfied by the entries $a,b,c,d$ of the $2 \times 2$ submatrix (arranged as a $4 \times 1$ matrix) of $\vec{R}^\omega$
\begin{equation*}
\label{eq:quantum-right-matrix-2x2-sub-matrix}
\begin{split}
	\left(\begin{smallmatrix}
		a\\b\\
		c\\d
	\end{smallmatrix}\right)
	\!=\!
	\left(\begin{smallmatrix}
		\vec{R}^\omega_{31}\\\vec{R}^\omega_{32}\\
		\vec{R}^\omega_{41}\\\vec{R}^\omega_{42}
	\end{smallmatrix}\right)
	\!=\!
	\left(\begin{smallmatrix}
		[Z_3^\frac{1}{4} Z_2^{-\frac{1}{2}}  Z_1^{-\frac{1}{4}}  X_2^\frac{1}{4}  X_1^{\frac{1}{2}}  X_3^{\frac{1}{4}}  Z_3^{\prime\prime \frac{1}{4}} 
		Z_2^{\prime\prime \frac{1}{2}}   Z_1^{\prime\prime \frac{3}{4}}
			]
	\\
		[Z_3^\frac{1}{4} Z_2^{-\frac{1}{2}}  Z_1^{-\frac{1}{4}}  X_2^\frac{1}{4}  X_1^{-\frac{1}{2}}  X_3^{\frac{1}{4}}  Z_3^{\prime\prime \frac{1}{4}}  
		Z_2^{\prime\prime \frac{1}{2}}  Z_1^{\prime\prime -\frac{1}{4}}
			]	
	+	
		[Z_3^\frac{1}{4} Z_2^{-\frac{1}{2}}  Z_1^{-\frac{1}{4}}  X_2^\frac{1}{4}  X_1^{\frac{1}{2}}  X_3^{\frac{1}{4}}  Z_3^{\prime\prime \frac{1}{4}}  
		Z_2^{\prime\prime \frac{1}{2}}  Z_1^{\prime\prime -\frac{1}{4}}
			]
	\\
		[Z_3^{-\frac{3}{4}} Z_2^{-\frac{1}{2}}  Z_1^{-\frac{1}{4}}  X_2^\frac{1}{4}  X_1^{\frac{1}{2}}  X_3^{\frac{1}{4}}  Z_3^{\prime\prime \frac{1}{4}}  
		Z_2^{\prime\prime \frac{1}{2}}  Z_1^{\prime\prime \frac{3}{4}}
			]
	\\		
			[Z_3^{-\frac{3}{4}} Z_2^{-\frac{1}{2}}  Z_1^{-\frac{1}{4}}  X_2^{-\frac{3}{4}}  X_1^{-\frac{1}{2}}  X_3^{\frac{1}{4}}  Z_3^{\prime\prime \frac{1}{4}}  
		Z_2^{\prime\prime \frac{1}{2}}  Z_1^{\prime\prime -\frac{1}{4}}
				]
		+	
			[Z_3^{-\frac{3}{4}} Z_2^{-\frac{1}{2}}  Z_1^{-\frac{1}{4}}  X_2^\frac{1}{4}  X_1^{-\frac{1}{2}}  X_3^{\frac{1}{4}}  Z_3^{\prime\prime \frac{1}{4}}  
		Z_2^{\prime\prime \frac{1}{2}}  Z_1^{\prime\prime -\frac{1}{4}}
				]+
		\\+	
			[Z_3^{-\frac{3}{4}} Z_2^{-\frac{1}{2}}  Z_1^{-\frac{1}{4}}  X_2^\frac{1}{4}  X_1^{\frac{1}{2}}  X_3^{\frac{1}{4}}  Z_3^{\prime\prime \frac{1}{4}}  
		Z_2^{\prime\prime \frac{1}{2}}  Z_1^{\prime\prime -\frac{1}{4}}
				]
	\end{smallmatrix}\right).
\end{split}
\end{equation*}

\begin{remark}
In order for these matrices to satisfy the relations required just to be in $\mathrm{M}_n^q(\mathscr{T}_n^\omega)$ (let alone $\mathrm{SL}_n^q(\mathscr{T}_n^\omega)$), they have to be normalized by dividing out their determinants.  For example, the above matrix $\vec{L}^\omega$ for $n=4$ would not satisfy the $q$-commutation relations required to be a point of $\mathrm{M}_4^q(\mathscr{T}_4^\omega)$ if we had not included the normalizing term $Z_1^{-\frac{1}{4}} Z_2^{-\frac{2}{4}} Z_3^{-\frac{3}{4}} 
X_1^{-\frac{1}{4}} X_2^{-\frac{2}{4}}X_3^{-\frac{1}{4}}  Z_1^{\prime-\frac{1}{4}} Z_2^{\prime-\frac{2}{4}} Z_3^{\prime-\frac{3}{4}}	$, as there would be a $1$ in the bottom corner.  
\end{remark}

	\section{Quantum snakes: proof of Theorem \ref{thm:first-theorem}}
	\label{sec:snake-move-algebras}

Above, we gave a sketch of the proof.  We now fill in the details.  Our emphasis will be on the left matrix $\vec{L}^\omega$.  The proof for the right matrix $\vec{R}^\omega$ is similar, as we will discuss in \S \ref{sec:setup-for-the-quantum-right-matrix}.  

Fix a sequence $\sigma^\mathrm{bot} = \sigma^1, \sigma^2, \dots, \sigma^N = \sigma^\mathrm{top}$ of adjacent snakes, as in the left setting; see \S \ref{sssec:snake-sequences}. The proof is valid for any choice of snake sequence, but our demonstrations in figures and examples will be for our preferred snake sequence; see Figure \ref{fig:n=5-sequences}.  Note that the example quantum matrices in \S \ref{sec:concrete-formulas} were presented using this preferred snake~sequence.

	\subsection{Snake-move quantum tori}

\begin{definition}
For $j=1, \dots, n-1$, the \textit{$j$-th snake-move quantum torus} $\mathscr{S}^\omega_j = \mathscr{T}(\vec{P}_j)$ is the quantum torus with Poisson matrix $\vec{P}_j$ defined by the quiver shown in Figure \ref{fig:snake-move-algebra-left-diamond} when $j=2,\dots, n-1$, and in Figure \ref{fig:snake-move-algebra-left-tail} when $j=1$.  As usual, there is one generator per edge of the quiver, solid arrows carry a weight $2$, and dotted arrows carry a weight $1$; compare \S \ref{sec:Fock-Goncharov-algebra-for-a-triangle}.  
\end{definition}

\begin{figure}[htb]
	\centering
	\includegraphics[width=.66\textwidth]{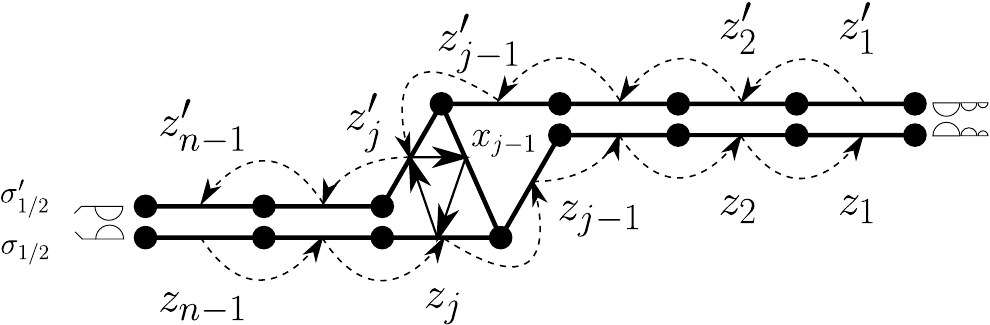}
	\caption{\small Diamond snake-move algebra ($j = 2, \dots, n-1$)}
	\label{fig:snake-move-algebra-left-diamond}
\end{figure}

\begin{figure}[htb]
	\centering
	\includegraphics[width=.66\textwidth]{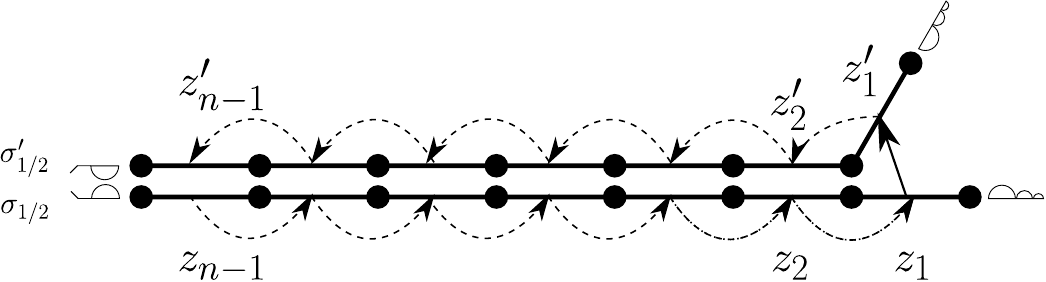}
	\caption{\small Tail snake-move algebra ($j = 1$)}
	\label{fig:snake-move-algebra-left-tail}
\end{figure}

\begin{conceptualremark}
\label{rem:snake-sweep-remark}
	The intention of this remark is to provide some guiding intuition for the upcoming constructions, and, strictly speaking, it is not required for the mathematical progression of the article.  
	
	The quiver of Figure \ref{fig:snake-move-algebra-left-tail} for the tail-move quantum torus is divided into a bottom and top side.  Similarly, the quiver of Figure \ref{fig:snake-move-algebra-left-diamond} for a diamond-move quantum torus has a bottom and top side, connected by a diagonal (where the variable $x_{j-1}$ is located).  As illustrated in the figures, we think of the bottom side (with un-primed generators $z_j$) as the top ``snake-half'' $\sigma_{1/2}$ of a snake $\sigma$ that has been ``split in half down its length''.  Similarly, we think of the top side (with primed generators $z_j^\prime$) as the bottom snake-half $\sigma^\prime_{1/2}$ of a split snake $\sigma^\prime$.  Compare Figure \ref{fig:snakes-theta-n-theta-n-1}, which illustrates a classical snake ``before~splitting''.  

This snake splitting can be seen more clearly in the \textit{quantum snake sweep} (see \S \ref{sec:embedding-FG-subalgebra} and  Figure \ref{fig:quantum-snake-sweep-example}  below) determined by the sequence of adjacent snakes $\sigma^\mathrm{bot} = \sigma^1, \sigma^2, \dots, \sigma^N = \sigma^\mathrm{top}$, where each snake $\sigma^i$ is split in half, so that each  snake-half forms a side in one of two adjacent snake-move quantum tori.  In the figure, the other halves (colored gray) of the bottom-most and top-most quantum snakes  can be thought of as either living in other triangles or not existing at all.  Prior to splitting a snake $\sigma$ in half, the snake consists of $n-1$ ``vertebrae'' connecting the $n$ snake-vertices $\sigma_k \in \Theta_{n-1}$.  Upon splitting the snake, the $j$-th vertebra splits into two generators $z_j$ and $z_j^\prime$ living in adjacent snake-move quantum tori.  
\end{conceptualremark}

	\subsection{Quantum snake-move matrices}

We turn to the key observation for the proof.  

\begin{proposition}
\label{lem:def-of-quantum-elementary-matrix-left}
	For $j=1,\dots,n-1$, the $j$-th quantum snake-move matrix
	\begin{equation*} 
	\label{eq:definition-of-quantum-elementary-matrices}
		\vec{M}_j 
		:=
		\left[
		\left( \prod_{k=1}^{n-1} \vec{S}^\mathrm{edge}_k(z_{k}) \right) 
		\vec{S}^\mathrm{left}_j(x_{j-1})
		\left( \prod_{k=1}^{n-1} \vec{S}^\mathrm{edge}_k(z'_{k}) \right)
		\right]
		\quad  
		\in  \mathrm{M}_n(\mathscr{S}^\omega_j)
	\end{equation*}
	is a $\mathscr{S}^\omega_j$-point of the quantum special linear group $\mathrm{SL}_n^q$.  That is, $\vec{M}_j  \in \mathrm{SL}_n^q(\mathscr{S}^\omega_j) \subset~\mathrm{M}_n(\mathscr{S}^\omega_j)$.  
\end{proposition}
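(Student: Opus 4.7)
The plan is to exploit the very particular shape of the classical matrix
\[
\widetilde{\vec{M}}_j := \left(\prod_{k=1}^{n-1} \vec{S}^{\mathrm{edge}}_k(z_k)\right) \vec{S}^{\mathrm{left}}_j(x_{j-1}) \left(\prod_{k=1}^{n-1} \vec{S}^{\mathrm{edge}}_k(z'_k)\right)
\]
sitting inside the Weyl brackets. Each edge-shearing matrix is diagonal, and the left-shearing matrix $\vec{S}^{\mathrm{left}}_j(x_{j-1})$ is upper triangular with its unique non-diagonal non-zero entry at position $(j, j+1)$. Consequently $\widetilde{\vec{M}}_j$ is itself upper triangular with exactly one off-diagonal non-zero entry, also at $(j, j+1)$, and every non-zero entry is an explicit monomial in $z_1, \ldots, z_{n-1}, z'_1, \ldots, z'_{n-1}, x_{j-1}$ whose exponents can be read off directly from the shearing-matrix formulas of \S\ref{sec:theorem-3}. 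Since the Weyl ordering acts entry-wise and preserves this sparsity pattern, $\vec{M}_j$ takes the form $\mathrm{diag}(a_1, \ldots, a_n)$ with one additional Weyl-ordered monomial $b$ at position $(j, j+1)$.

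Next I would enumerate the $2 \times 2$ submatrices of $\vec{M}_j$ and observe that, because so many of its entries vanish, the defining relations of $\mathrm{M}_n^q$ collapse down to just three families of requirements: (i) pairwise commutativity of the diagonal, $a_p a_{p'} = a_{p'} a_p$; (ii) distant commutativity, $b \, a_i = a_i \, b$ for every $i \notin \{j, j+1\}$; and (iii) local $q$-commutation, $b \, a_j = q \, a_j \, b$ together with $a_{j+1} \, b = q \, b \, a_{j+1}$. The other $2 \times 2$ submatrices have enough zero entries that the corresponding quantum-group relations trivialise. Each of (i)--(iii) is a direct Weyl-bracket computation, using the defining $q$-commutations $X_p^r X_{p'}^s = q^{\vec{P}_{j,\,pp'} r s} X_{p'}^s X_p^r$ of the snake-move quantum torus $\mathscr{S}^\omega_j$ together with the standard identity for the Weyl bracket of a product of monomials. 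The quiver in Figure \ref{fig:snake-move-algebra-left-diamond} (respectively Figure \ref{fig:snake-move-algebra-left-tail} when $j=1$) is crafted precisely so that the exponents appearing in $a_i, b$ combine under $\vec{P}_j$ to produce (i)--(iii) on the nose.

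For the quantum determinant, since $\vec{M}_j$ is upper triangular and the diagonal entries commute by (i), the remark following the definition of $\mathrm{Det}^q$ gives $\mathrm{Det}^q(\vec{M}_j) = \prod_i a_i$. Computing this product in $\mathscr{S}^\omega_j$ via the Weyl-bracket definition, the exponents of every $z_k$, $z'_k$, and $x_{j-1}$ cancel (the classical cancellation is engineered by the normalizing prefactors $z_k^{-k/n}$, $z_k'^{-k/n}$, and $x_{j-1}^{-(j-1)/n}$ in the shearing matrices, each of which has classical determinant $1$), and the remaining $q$-scalar contributed by the successive Weyl brackets and reorderings works out to $1$.

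The main obstacle is the verification of the local $q$-commutations (iii) together with the final scalar in the determinant calculation. Both amount to counting arrows of the quiver $\vec{P}_j$ with the correct signs and multiplicities, and they are forced to hold because $\vec{P}_j$ is precisely the ``local'' fragment of the full Fock-Goncharov quiver of \S\ref{sec:Fock-Goncharov-algebra-for-a-triangle} corresponding to the adjacent snake pair $(\sigma^i, \sigma^{i+1})$. Conceptually this is the quantum counterpart of Proposition \ref{PROP:ELEMENTARY-SNAKE-MOVE-MATRICES}, and once the quiver is in hand each required relation reduces to an exercise in $q$-counting; the tail case $j=1$ follows the same template, except that the variable $x_{j-1}$ drops out entirely since $\vec{S}^{\mathrm{left}}_1$ is independent of $x_0$.
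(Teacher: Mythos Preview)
Your proposal is correct and follows essentially the same approach as the paper, which simply records that the proposition ``is a direct calculation, checking that the entries of the matrix $\vec{M}_j$ satisfy the relations of the dual quantum group $\mathrm{SL}_n^q$ in the $j$-th snake-move quantum torus $\mathscr{S}^\omega_j$.'' Your outline is in fact more informative than the paper's proof: your observation that $\vec{M}_j$ is diagonal except for a single Weyl-ordered monomial at position $(j,j+1)$, and the consequent reduction of the $\mathrm{M}_n^q$ relations to the three families (i)--(iii) together with the triangular form of $\mathrm{Det}^q$, is exactly the right way to organize the computation.
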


Note the use of the Weyl quantum ordering; see \S \ref{sssec:weyl-ordering}.  Here, the matrices $\vec{S}_k^\mathrm{edge}(z)$ and $\vec{S}_j^\mathrm{left}(x)$ for $z, x$ in the commutative algebra $\mathscr{S}_j^1$ are defined as in \S \ref{sec:theorem-3}; see also \S \ref{sssec:weyl-ordering-continued}-\ref{sssec:quantum-left-and-right-matrices}.  Note when $j=1$, the matrix $\vec{S}^\mathrm{left}_1(x_{0}) = \vec{S}^\mathrm{left}_1$ is well-defined, despite $x_0$ not being defined.

	Proposition \ref{lem:def-of-quantum-elementary-matrix-left} is a direct calculation.  See Appendix \ref{sec:proofofsnakemoveslnq} for the proof.

For example, in the case $n=4$, $j=3$, the lemma says that the matrix
\begin{equation*}
	\vec{M}_3
	=
	\left[
z_1^{-\frac{1}{4}} z_2^{-\frac{2}{4}} z_3^{-\frac{3}{4}}	
\left(\begin{smallmatrix}
z_1 z_2 z_3&&&\\
&z_2 z_3&&\\
&&z_3&\\
&&&1
\end{smallmatrix}\right)
x_2^{-\frac{2}{4}}
\left(\begin{smallmatrix}
x_2&&&\\
&x_2&&\\
&&1&1\\
&&&1
\end{smallmatrix}\right)
z_1^{\prime-\frac{1}{4}} z_2^{\prime-\frac{2}{4}} z_3^{\prime-\frac{3}{4}}	
\left(\begin{smallmatrix}
z^\prime_1 z^\prime_2 z^\prime_3&&&\\
&z^\prime_2 z^\prime_3&&\\
&&z^\prime_3&\\
&&&1
\end{smallmatrix}\right)
	\right]  
\end{equation*}
is in $\mathrm{SL}_4^q(\mathscr{S}^\omega_3)$.

	\subsection{Technical step:  embedding a distinguished subalgebra \texorpdfstring{$\mathscr{T}_L$}{TLeft} of \texorpdfstring{$\mathscr{T}_n^\omega$}{Tnw} into a tensor product \texorpdfstring{$\bigotimes_{i=1}^{N-1} 
	\mathscr{S}^\omega_{j_i}$}{} of snake-move quantum tori}
	\label{sec:embedding-FG-subalgebra}

\begin{figure}[htb]
	\centering
	\includegraphics[scale=.57]{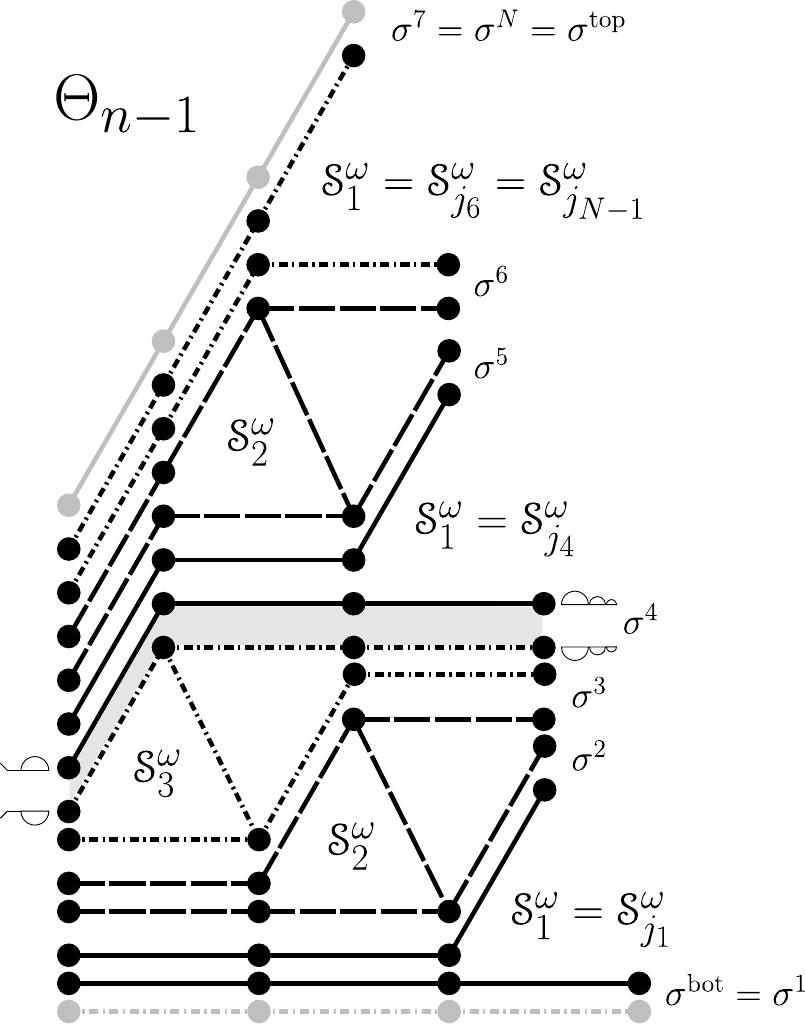}
	\caption{\small Quantum snake sweep ($n=4$); compare Figure \ref{fig:sequence-of-snakes-n=5-left}}
	\label{fig:quantum-snake-sweep-example}
\end{figure}

\begin{figure}[htb]
	\centering
	\includegraphics[width=.81\textwidth]{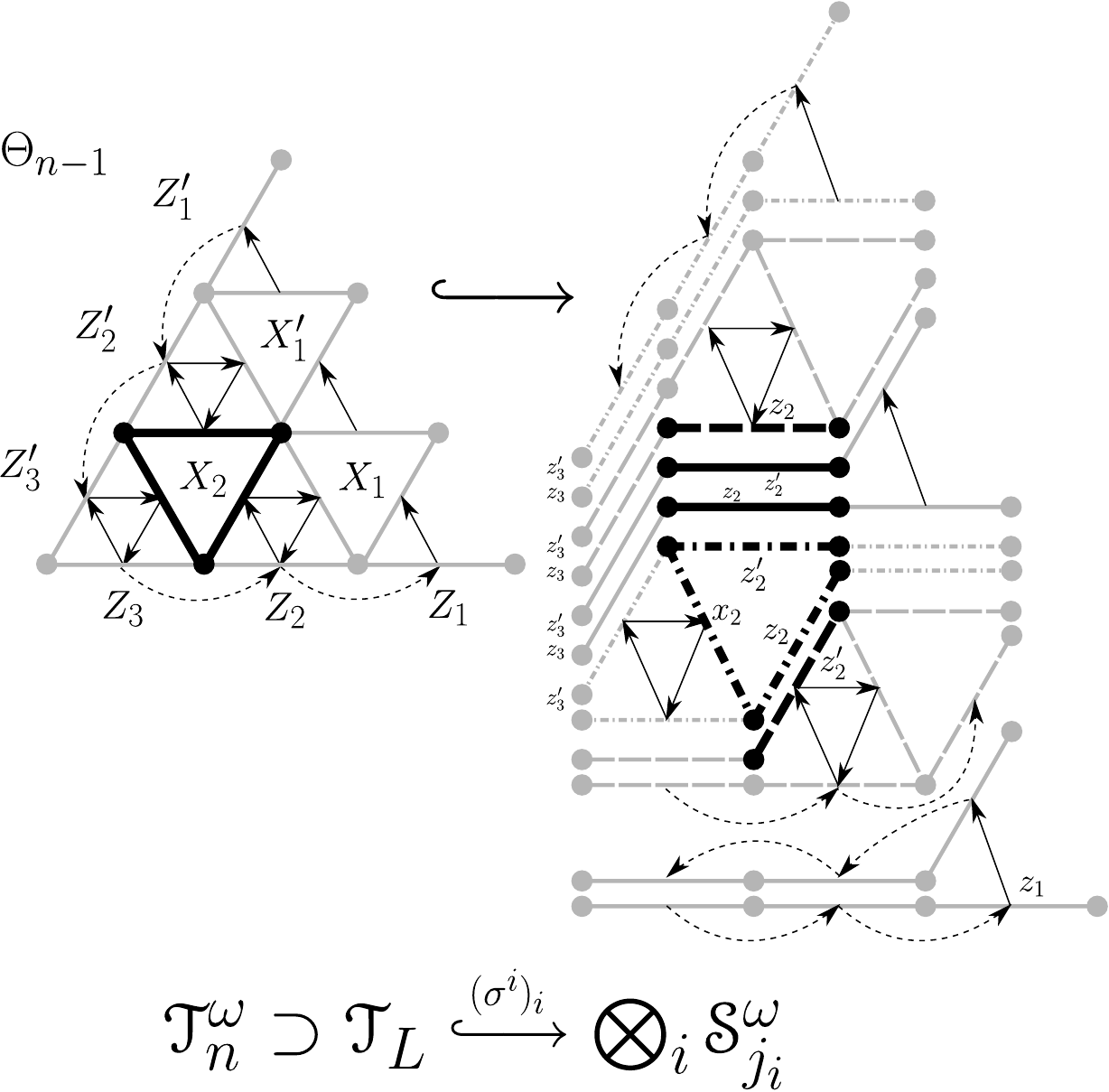}
	\caption{\small Embedding $\mathscr{T}_L$ in the tensor product of snake-move quantum tori}
	\label{fig:embedding-into-snake-move-algebra}
\end{figure}
		
For the snake-sequence $(\sigma^i)_{i=1,\dots,N}$, to each pair $(\sigma^i, \sigma^{i+1})$ of adjacent snakes we associate a snake-move quantum torus $\mathscr{S}^\omega_{j_i}$, recalling   Figure \ref{fig:quantum-snake-sweep-example} (see also Remark \ref{rem:snake-sweep-remark}).  Here $j_i$ corresponds to what was called $k$ in Definition \ref{def:adjacent-snakes}.  Recall the Fock-Goncharov quantum torus $\mathscr{T}_n^\omega$ (for example, Figure \ref{fig:n=4-left-and-right-matrices-example}).  

We now take a technical step.  Using the notation of Figures \ref{fig:left-and-right-quantum-matrices} and \ref{fig:n=4-left-and-right-matrices-example}, define $\mathscr{T}_L \subset \mathscr{T}_n^\omega$ (``$L$'' for ``Left'') to be the subalgebra generated by all the generators (and their inverses) of $\mathscr{T}_n^\omega$ except for $Z_1^{\prime\prime \pm 1/n}, \dots, Z_{n-1}^{\prime\prime \pm 1/n}$.  We claim that the snake-sequence $(\sigma^i)_i$  induces an embedding
\begin{equation*}
\label{eq:subalgebra-embedding}	
\mathscr{T}_L
	\overset{(\sigma^i)_i}{\longhookrightarrow}
	\bigotimes_{i=1}^{N-1} 
	\mathscr{S}^\omega_{j_i}
\end{equation*}
of algebras, realizing $\mathscr{T}_L \subset \mathscr{T}_n^\omega$ as a subalgebra of the tensor product of the  snake-move quantum tori $\mathscr{S}^\omega_{j_i}$ associated to the adjacent snake pairs $(\sigma^i, \sigma^{i+1})$.  Here, recall in general that the algebra structure for a  tensor product $A \otimes B$ of algebras $A$ and $B$ is defined by $(a \otimes b)\cdot (a^\prime \otimes b^\prime)=(a \cdot a^\prime) \otimes (b \cdot b^\prime)$ for all $a,a^\prime \in A$ and $b,b^\prime \in B$, extended linearly.

A more formal definition of the embedding will be given in \S \ref{ssec:formaldefofembedding} below.  We first explain the embedding through an example, in the setting $n=4$; see Figure \ref{fig:embedding-into-snake-move-algebra} (compare Figure \ref{fig:quantum-snake-sweep-example}).  

In this setting, the coordinate $X_2$ for instance (emphasized in Figure \ref{fig:embedding-into-snake-move-algebra}), is mapped to
\begin{equation*}
\label{eq:example-image-point-of-embedding}
	X_2 \mapsto 
	1 \otimes 
	z^\prime_2 \otimes
	z_2 x_2 z^\prime_2 \otimes
	z_2 z^\prime_2 \otimes
	z_2 \otimes
	1
	\quad\quad\quad \in \mathscr{S}^\omega_1 \otimes 
	\mathscr{S}^\omega_2 \otimes
	\mathscr{S}^\omega_3 \otimes
	\mathscr{S}^\omega_1 \otimes
	\mathscr{S}^\omega_2 \otimes
	\mathscr{S}^\omega_1.
\end{equation*} 
Similarly, the other coordinates $Z_1, Z^\prime_3, Z_2, Z_3, X_1, X_1^\prime, Z_2^\prime, Z_1^\prime$ are mapped to
\begin{align*}
	Z_1 &\mapsto z_1\otimes1\otimes1\otimes1\otimes1\otimes1,
&
	Z^\prime_3 &\mapsto 1 \otimes 1 \otimes z^\prime_3 \otimes z_3 z^\prime_3 \otimes z_3 z^\prime_3 \otimes z_3 z^\prime_3,
\\	Z_2 &\mapsto z_2 z_2^\prime \otimes z_2 \otimes 1 \otimes 1 \otimes 1 \otimes 1,
&
	Z_3 &\mapsto z_3 z_3^\prime \otimes z_3 z_3^\prime \otimes z_3 \otimes 1 \otimes 1 \otimes 1,
\\	X_1 &\mapsto z_1^\prime \otimes z_1 x_1 z_1^\prime \otimes z_1 z_1^\prime \otimes z_1 \otimes 1 \otimes 1,
&
	X_1^\prime &\mapsto 1 \otimes 1 \otimes 1 \otimes z_1^\prime \otimes z_1 x_1 z_1^\prime \otimes z_1,
\\	Z_2^\prime &\mapsto 1 \otimes 1 \otimes 1 \otimes 1 \otimes z_2^\prime \otimes z_2 z_2^\prime,
&
	Z_1^\prime &\mapsto 1 \otimes 1 \otimes 1 \otimes 1 \otimes 1 \otimes z_1^\prime.
\end{align*}

Note that the monomials (for instance, $z_2 x_2 z^\prime_2$ or $z_2 z_2^\prime$) appearing in the $i$-th tensor factor of the image of a generator $X$ or $Z$ of the subalgebra $\mathscr{T}_L$ under this mapping consist of mutually commuting generators $x$'s and/or $z$'s of the $i$-th snake-move quantum torus $\mathscr{S}_{j_i}^\omega$, so the order in which they are written is irrelevant. It is clear from Figure \ref{fig:embedding-into-snake-move-algebra} that these images satisfy the relations of $\mathscr{T}_L$.  In particular, the ``interior'' dotted arrows  lying at each interface between two snake-move quantum tori cancel each other out; note that, in Figure \ref{fig:embedding-into-snake-move-algebra}, we have omitted drawing some of these dotted arrows.  We gather that the mapping is well-defined and is an algebra homomorphism.  Injectivity follows from the property that every generator (that is, quiver edge) appearing on the right side of Figure \ref{fig:embedding-into-snake-move-algebra} corresponds to a unique generator on the left side.  Lastly, we technically should have defined the map on the formal $n$-roots of the coordinates of $\mathscr{T}_L$.  This is done in the obvious way, for instance,
\begin{equation*}
	X_2^{1/4} \mapsto 
	1 \otimes 
	z^{\prime 1/4}_2 \otimes
	z_2^{1/4} x_2^{1/4} z^{\prime 1/4}_2 \otimes
	z_2^{1/4} z^{\prime 1/4}_2 \otimes
	z_2^{1/4} \otimes
	1
	\quad \in \mathscr{S}^\omega_1 \otimes 
	\mathscr{S}^\omega_2 \otimes
	\mathscr{S}^\omega_3 \otimes
	\mathscr{S}^\omega_1 \otimes
	\mathscr{S}^\omega_2 \otimes
	\mathscr{S}^\omega_1.
\end{equation*}

	\subsubsection{Formal definition of the embedding}
	\label{ssec:formaldefofembedding}
	
	A \textit{segment} $\overline{\mu \nu}=\overline{\nu \mu}$ of the discrete triangle $\Theta_{n-1}$ is a line connecting neighboring vertices $\mu, \nu$.  Segments of the form $\overline{(\alpha,\beta,\gamma),(\alpha-1,\beta,\gamma+1)}$ are called \textit{horizontal}; segments of the form $\overline{(\alpha,\beta,\gamma),(\alpha-1,\beta+1,\gamma)}$ are called \textit{acute}; and, segments of the form $\overline{(\alpha,\beta,\gamma),(\alpha,\beta+1,\gamma-1)}$ are called \textit{obtuse}.  Let $\mathrm{Seg}_L$ denote the set of segments minus the obtuse segments with zero first coordinate.  For example, in the case $n=4$ the set $\mathrm{Seg}_L$ has $15$ elements; see the left hand side of Figure \ref{fig:embedding-into-snake-move-algebra}.  Let $\mathrm{Coord}_L \subset \mathscr{T}_L$ denote the set of coordinates, that is,
\begin{equation*}
	\mathrm{Coord}_L
	=
	\left\{X_{abc};\,\,(a,b,c)\in\mathrm{int}(\Theta_n)\right\}
	\cup
	\left\{Z_j;\,\, j=1,2,\dots,n-1\right\}
	\cup
	\left\{Z^\prime_j;\,\, j=1,2,\dots,n-1\right\}.
\end{equation*}
Note that the coordinates $X_{abc}$ correspond to the small downward facing triangles in $\Theta_{n-1}$, each of which is a union of a horizontal, acute, and obtuse segment in $\mathrm{Seg}_L$; the coordinates $Z_j$ correspond to horizontal segments with zero second coordinate; the coordinates $Z^\prime_j$ correspond to acute segments with zero third coordinate; and, each segment corresponds in this way to a unique coordinate.   In particular, there is a canonical surjective function $\pi^\prime : \mathrm{Seg}_L \longrightarrow \mathrm{Coord}_L \subset \mathscr{T}_L$.  See   the left hand side of Figure \ref{fig:embedding-into-snake-move-algebra} for the case $n=4$, where for example the three bold segments constitute the preimage $\pi^{\prime -1}(X_2)$.

Given a snake sequence $(\sigma^i)_{i=1,2,\dots,N}$, for each $1 \leq i \leq N-1$ let $\mathrm{Coord}^i \subset \mathscr{S}_{j_i}^\omega$ denote the set of coordinates in the snake-move quantum torus $\mathscr{S}^\omega_{j_i}$.  There are associated functions  $\varphi^i :  \mathrm{Coord}^i  \longrightarrow \mathrm{Coord}_L$, in general neither surjective nor injective, defined as follows.  To each coordinate $z_k \in \mathscr{S}_{j_i}^\omega$ for $k=1,2,\dots,n-1$ there is associated a segment $\mathrm{seg}(z_k) \in \mathrm{Seg}_L$ of the snake $\sigma^i$, namely $\mathrm{seg}(z_k)=\overline{\sigma_{k+1}^i \sigma_k^i }$; to each coordinate $z^\prime_k \in \mathscr{S}_{j_i}^\omega$ for $k=1,2,\dots,n-1$ there is associated a segment $\mathrm{seg}(z^\prime_k) \in \mathrm{Seg}_L$ of the snake $\sigma^{i+1}$, namely $\mathrm{seg}(z^\prime_k)=\overline{\sigma_{k+1}^{i+1} \sigma_k^{i+1} }$; and, to the coordinate $x_{j_i-1} \in \mathscr{S}_{j_i}^\omega$ there is associated an obtuse segment $\mathrm{seg}(x_{j_i-1}) \in \mathrm{Seg}_L$, which is not a segment of a snake, namely $\mathrm{seg}(x_{j_i-1})=\overline{\sigma^i_{j_i} \sigma^{i+1}_{j_i}}$.  Compare Figure \ref{fig:quantum-snake-sweep-example}.  Then the function $\varphi^i$ is defined by $\varphi^i(x)=\pi^\prime(\mathrm{seg}(x))$ for all $x \in \mathrm{Coord}^i$.  

For example in the case $n=4$, as illustrated in Figure \ref{fig:embedding-into-snake-move-algebra}, we have for instance $\varphi^1(z_1)=Z_1$, $\varphi^2(z_2^\prime)=X_2$, $\varphi^3(x_2)=X_2$, $\varphi^4(z_2^\prime)=X_2$, $\varphi^5(z_2)=X_2$, and $\varphi^6(z_3)=Z_3^\prime$.  

Finally, define the desired embedding   on generators $X^{1/n}$ of $\mathscr{T}_L$, for $X \in \mathrm{Coord}_L$, so that the image of $X^{1/n}$ in the tensor product $\bigotimes_i \mathscr{S}_{j_i}^\omega$ is the pure tensor defined by the property that its $i$-th factor is $\prod_{x \in (\varphi^i)^{-1}(X)} x^{1/n} \in \mathscr{S}^\omega_{j_i}$.  Note this is well-defined, since the coordinates $x \in (\varphi^i)^{-1}(X)$ in each preimage commute by design.  Note, by definition, if $(\varphi^i)^{-1}(X)$ is empty, then the product defining the $i$-th factor is $1$.  

In \S \ref{ssec:finishingtheproof}, we will make use of the surjective function $\pi : \bigcup_{i=1}^{N-1} \mathrm{Coord}^i \longrightarrow \mathrm{Coord}_L$ defined by $\pi(x)=\varphi^i(x)$ for $x \in \mathrm{Coord}^i$.

	\subsection{Finishing the proof}
	\label{ssec:finishingtheproof}

Comparing to the sketch of proof given in \S \ref{ssec:main-result}, we gather:
\begin{itemize}
	\item  $\vec{M}_\mathrm{FG} := \vec{L}^\omega
	\quad  \in  \mathrm{M}_n(\mathscr{T}_L) 
	\quad  \subset \mathrm{M}_n\left(\bigotimes_{i=1}^{N-1} 
	\mathscr{S}^\omega_{j_i}\right)$;
	\item  $\vec{M} := \vec{M}_{j_1} \vec{M}_{j_2} \cdots \vec{M}_{j_{N-1}}
	\quad  \in  \mathrm{SL}_n^q\left(\bigotimes_{i=1}^{N-1} 
	\mathscr{S}^\omega_{j_i}\right)
	\quad  \subset \mathrm{M}_n\left(\bigotimes_{i=1}^{N-1}	\mathscr{S}^\omega_{j_i}\right)$.  
\end{itemize}
To finish the proof, it remains to show
\begin{equation*}
\tag{$\ast\!\ast\!\ast$}
\label{eq:last-equation-theorem-1}
	\vec{M}_\mathrm{FG} \overset{?}{=} \vec{M}
	\quad  \in \mathrm{M}_n\left(\bigotimes_{i=1}^{N-1} 
	\mathscr{S}^\omega_{j_i}\right).
\end{equation*}

The strategy is to commute the many variables (as in the right hand side of Figure \ref{fig:embedding-into-snake-move-algebra}) appearing on the right hand side $\vec{M} = \prod_i \vec{M}_{j_i}$ (defined via Proposition \ref{lem:def-of-quantum-elementary-matrix-left}) of Equation \eqref{eq:last-equation-theorem-1}, until $\vec{M}$ has been put into the form of the left hand side $\vec{M}_\mathrm{FG}$ (defined via Definition \ref{def:left-and-right-quantum-matrices} followed by applying the embedding $\mathscr{T}_L \hookrightarrow \bigotimes_i \mathscr{S}^\omega_{j_i}$ of \S \ref{sec:embedding-FG-subalgebra}).  This is accomplished by applying the following two facts.

\begin{lemma}
\label{lem:technical-lemma} $ $
	\begin{enumerate}
	\item  If $\widetilde{\vec{M}}_1, \widetilde{\vec{M}}_2, \dots, \widetilde{\vec{M}}_{N-1}$ are $n \times n$ matrices with coefficients in $(q=\omega=\omega^{1/2}=1)$-specializations $\mathscr{T}^1_i$ of general quantum tori $\mathscr{T}^\omega_1, \mathscr{T}^\omega_2, \dots, \mathscr{T}^\omega_{N-1}$, viewed as factors in \hbox{$\mathscr{T}^\omega_1 \otimes \mathscr{T}^\omega_2 \otimes \cdots \otimes \mathscr{T}^\omega_{N-1}$}, then
\begin{equation*}
	\left[\widetilde{\vec{M}}_1\right]\left[\widetilde{\vec{M}}_2\right]\cdots\left[\widetilde{\vec{M}}_{N-1}\right] 
	=  \left[\widetilde{\vec{M}}_1 \widetilde{\vec{M}}_2 \cdots \widetilde{\vec{M}}_{N-1}\right]
	\quad  \in  \mathrm{M}_n\left(\mathscr{T}^\omega_1 \otimes \mathscr{T}^\omega_2 \otimes \cdots \otimes \mathscr{T}^\omega_{N-1}\right).
\end{equation*}
Here, we are viewing the tensor product $\mathscr{T}^\omega_1 \otimes \mathscr{T}^\omega_2 \otimes \cdots \otimes \mathscr{T}^\omega_{N-1}$ as a quantum torus in the obvious way, as demonstrated in the proof below.

\item  For commuting variables $z$ and $x$, the matrices $\vec{S}^\mathrm{edge}_k(z)$ and $\vec{S}^\mathrm{left}_{j}(x)$,  as in {\upshape\S  \ref{sec:theorem-3}},~satisfy
\begin{equation*}
	\vec{S}^\mathrm{edge}_k(z) \vec{S}^\mathrm{left}_{j}(x)
	=  \vec{S}^\mathrm{left}_{j}(x) \vec{S}^\mathrm{edge}_k(z)
	\quad  \text{ if and only if }
	\quad
	k \neq j.
\end{equation*}
\end{enumerate}
\end{lemma}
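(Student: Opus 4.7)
The plan is to prove each clause of Lemma \ref{lem:technical-lemma} by a direct unwinding of the definitions; both are structural observations rather than genuine calculations, which is why the paper calls them immediate.

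For clause (1), I would start by recording the key feature of the tensor product quantum torus $\mathscr{T}^\omega_1 \otimes \cdots \otimes \mathscr{T}^\omega_{N-1}$: its Poisson matrix is block diagonal, i.e.\ $\vec{P}_{ij} = 0$ whenever $i$ and $j$ index generators living in different tensor factors. Fix a matrix entry on the right hand side and expand it as $[\sum_{i_1, \dots, i_{N-2}} (\widetilde{\vec{M}}_1)_{k\, i_1} (\widetilde{\vec{M}}_2)_{i_1 i_2} \cdots (\widetilde{\vec{M}}_{N-1})_{i_{N-2}\, \ell}]$, a sum of monomials $w = w_1 w_2 \cdots w_{N-1}$ with $w_m$ drawn from the generators of $\mathscr{T}^\omega_m$. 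The Weyl correction $q^{-\frac{1}{2}\sum_{a<b} \vec{P}_{i_a i_b} r_a r_b}$ then splits as a product over pairs lying in the same block (the cross-block pairs contribute $1$ by the block-diagonality), which is exactly the product of the intra-factor Weyl corrections $[w_1][w_2]\cdots[w_{N-1}]$. By linearity of $[-]$ and compatibility with entrywise matrix multiplication, this matches the corresponding entry of $[\widetilde{\vec{M}}_1][\widetilde{\vec{M}}_2]\cdots[\widetilde{\vec{M}}_{N-1}]$, proving the identity.

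For clause (2), I would just multiply out the two products and compare. Looking at the explicit forms in \S \ref{sec:theorem-3}, both $\vec{S}^\mathrm{edge}_k(z)$ and $\vec{S}^\mathrm{left}_j(x)$ are diagonal except for the single off-diagonal entry $1$ in position $(j, j+1)$ of $\vec{S}^\mathrm{left}_j(x)$. The scalar prefactors $z^{-j/n}$ and $x^{\pm(k-1)/n}$ commute with everything in sight since $z$ and $x$ commute, so they may be ignored. What remains is to ask whether the off-diagonal $1$ at $(j, j+1)$ can slide past $\vec{S}^\mathrm{edge}_k(z)$, which happens exactly when the $(j,j)$ and $(j+1,j+1)$ diagonal entries of $\vec{S}^\mathrm{edge}_k(z)$ coincide. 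Those diagonal entries are $z$ for the first $k$ positions and $1$ thereafter, so they agree iff $k \neq j$, yielding both directions of the ``if and only if'' simultaneously.

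The only obstacle I anticipate is purely bookkeeping — keeping indexing conventions straight between the Weyl correction exponents and the tensor-factor decomposition — but no subtle step is involved, and no case analysis beyond the 2-by-2 check in (2) is required.
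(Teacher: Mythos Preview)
Your proposal is correct and matches the paper's treatment: the paper gives no proof at all beyond declaring both facts ``immediate,'' and your argument is precisely the direct unwinding of the definitions that justifies this. The only slips are cosmetic index swaps in the scalar prefactors (it should be $z^{-k/n}$ and $x^{-(j-1)/n}$), which do not affect the reasoning.
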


\begin{proof}
The proof of part (1) is straightforward.  To simplify the notation, we demonstrate the calculation for two matrices $\widetilde{\vec{M}}$ and $\widetilde{\vec{N}}$ with coefficients in classical tori $\mathscr{T}$ and $\mathscr{U}$ with coordinates $\{X_i\}_{i=1,2,\dots,m}$ and $\{Y_j\}_{j=1,2,\dots,p}$ and quivers $\epsilon$ and $\zeta$, respectively, where $\mathscr{T}$ and $\mathscr{U}$ are viewed in $\mathscr{T} \otimes \mathscr{U}$.  The proof for finitely many matrices is analogous.  

By linearity, it suffices to assume $\widetilde{\vec{M}}_{ij} \in \mathscr{T}$ and $\widetilde{\vec{N}}_{k\ell} \in \mathscr{U}$ are monomials, that is, $\widetilde{\vec{M}}_{ij}=X_1^{a^{ij}_1} X_2^{a^{ij}_2} \cdots X_m^{a^{ij}_m}$ and $\widetilde{\vec{N}}_{k\ell}=Y_1^{b^{k\ell}_1}Y_2^{b^{k\ell}_2} \cdots Y_p^{b^{k\ell}_p}$.  Recall that, by definition, different tensor factors commute under multiplication in $\mathscr{T} \otimes \mathscr{U}$.  We have for all $1 \leq i,j \leq n$
\begin{align*}
	&\left[ \widetilde{\vec{M}} \widetilde{\vec{N}} \right]_{ij} 
	= \left[ (\widetilde{\vec{M}} \widetilde{\vec{N}})_{ij} \right]
	= \sum_k \left[ \widetilde{\vec{M}}_{ik} \widetilde{\vec{N}}_{kj} \right]
	= \sum_k \left[ X_1^{a^{ik}_1} X_2^{a^{ik}_2}\cdots X_m^{a^{ik}_m} Y_1^{b^{kj}_1}Y_2^{b^{kj}_2}  \cdots Y_p^{b^{kj}_p} \right]
\\	
	&= \sum_k 
	q^{-\frac{1}{2}\left(
	\sum_{1 \leq \alpha < \beta \leq m} (\epsilon \otimes \zeta)_{\alpha \beta} a_\alpha^{ik} a_\beta^{ik} 
	+  \sum_{1 \leq \alpha \leq m; 1 \leq \beta \leq p} (\epsilon \otimes \zeta)_{\alpha(m + \beta)} a_\alpha^{ik} b_\beta^{kj} 
	+  \sum_{1 \leq \alpha < \beta \leq p} (\epsilon \otimes \zeta)_{(m+\alpha)(m+\beta)} b_\alpha^{kj} b_\beta^{kj}
	\right)}\ast
\\	&\ast X_1^{a^{ik}_1} X_2^{a^{ik}_2}\cdots X_m^{a^{ik}_m} Y_1^{b^{kj}_1}Y_2^{b^{kj}_2}  \cdots Y_p^{b^{kj}_p}
\\	&= \sum_k 
	q^{-\frac{1}{2}\left(
	\sum_{1 \leq \alpha < \beta \leq m} \epsilon_{\alpha \beta} a_\alpha^{ik} a_\beta^{ik} 
	+  \sum_{1 \leq \alpha \leq m; 1 \leq \beta \leq p} 0 a_\alpha^{ik} b_\beta^{kj} 
	+  \sum_{1 \leq \alpha < \beta \leq p} \zeta_{\alpha \beta} b_\alpha^{kj} b_\beta^{kj}
	\right)} X_1^{a^{ik}_1} X_2^{a^{ik}_2}\cdots X_m^{a^{ik}_m} Y_1^{b^{kj}_1}Y_2^{b^{kj}_2}  \cdots Y_p^{b^{kj}_p}
\\	&= \sum_k \left[ X_1^{a^{ik}_1} X_2^{a^{ik}_2}\cdots X_m^{a^{ik}_m} \right] \left[ Y_1^{b^{kj}_1}Y_2^{b^{kj}_2}  \cdots Y_p^{b^{kj}_p} \right]
	= \sum_k \left[ \widetilde{\vec{M}}_{ik} \right] \left[ \widetilde{\vec{N}}_{kj} \right]
	=\left(\left[ \widetilde{\vec{M}} \right] \left[ \widetilde{\vec{N}} \right]\right)_{ij}.
\end{align*}

The proof of part (2) is by inspection.
\end{proof}

\begin{proof}[Proof of Theorem {\upshape\ref{thm:first-theorem}}]
	By part (1) of Lemma \ref{lem:technical-lemma}, it suffices to establish Equation \eqref{eq:last-equation-theorem-1} when $q=\omega=\omega^{1/2}=1$, in which case we do not need to worry about the Weyl quantum ordering.  

It is helpful to introduce a simplifying notation.  For coordinates $z_k^{(i)}, x^{(i)}_{j}, z^{\prime(i)}_k \in \mathscr{S}_{j_i}^1$, put
\begin{equation*}
	\vec{Z}_k^{(i)} := \vec{S}_k^\mathrm{edge}(z_k^{(i)}), 
	\quad\quad
	\vec{X}_{j}^{(i)} := \vec{S}^\mathrm{left}_{j+1}(x_{j }^{(i)}),
	\quad\quad
	\vec{Z}_k^{\prime(i)} := \vec{S}_k^\mathrm{edge}(z^{\prime(i)}_k)
	\quad\quad
	\in \mathrm{M}_n(\mathscr{S}^1_{j_i}).  
\end{equation*}
In this new notation, the matrices $\vec{M}_{j_i} \in \mathrm{M}_n(\mathscr{S}^1_{j_i})$ of Proposition \ref{lem:def-of-quantum-elementary-matrix-left} can be expressed by
\begin{equation*}
	\vec{M}_{j_i} = \left( \prod_{k=1}^{n-1} \vec{Z}_k^{(i)} \right) \vec{X}^{(i)}_{j_i - 1} \left( \prod_{k=1}^{n-1} \vec{Z}_k^{\prime(i)} \right)
	\quad\quad  
\in \mathrm{M}_n(\mathscr{S}^1_{j_i})
\end{equation*}
and part (2) of Lemma \ref{lem:technical-lemma} now reads, for any $i_1, i_2 \in \{ 1, 2, \dots, N-1 \}$,
\begin{equation*}
\tag{$\dagger$}
\label{eq:reformulation-of-part2-of-lemma}
	\vec{Z}^{(i_1)}_k \vec{X}_j^{(i_2)} = \vec{X}_j^{(i_2)} \vec{Z}^{(i_1)}_k  
\in \mathrm{M}_n\left(\bigotimes_{i=1}^{N-1} 
	\mathscr{S}^1_{j_i}\right)
\quad
  \text{ if and only if } \quad  k \neq j+1
\quad  (\text{similarly for } \vec{Z} \to \vec{Z}^\prime).  
\end{equation*}

\textit{Example: n=2.}  In this case, $N=2$, we have $\mathscr{S}_{j_1}^1 = \mathscr{S}_1^1 \cong \mathscr{T}_L \subset \mathscr{T}_n^1$, and the embedding $\mathscr{T}_L \overset{\sim}{\hookrightarrow} \mathscr{S}_1^1$ is the identity, $Z_1 \mapsto z_1^{(1)}$, $Z_1^\prime \mapsto z_1^{\prime(1)}$.  Equation \eqref{eq:last-equation-theorem-1} is also trivial, reading
\begin{equation*}
\begin{split}
	\vec{M} = \vec{M}_1 = \vec{Z}_1^{(1)} \vec{X}_0^{(1)} \vec{Z}_1^{\prime(1)}
&=	z_1^{(1)\frac{-1}{2}} \left(\begin{smallmatrix} z_1^{(1)}&0\\0&1 \end{smallmatrix} \right)
	\left( \begin{smallmatrix} 1&1\\0&1 \end{smallmatrix} \right)
	z_1^{\prime(1)\frac{-1}{2}}  \left(\begin{smallmatrix} z^{\prime(1)}_1&0\\0&1 \end{smallmatrix} \right)
\\ &=	  Z_1^{-\frac{1}{2}} \left(\begin{smallmatrix} Z_1&0\\0&1 \end{smallmatrix} \right)
	\left( \begin{smallmatrix} 1&1\\0&1 \end{smallmatrix} \right)
	Z_1^{\prime -\frac{1}{2}} \left(\begin{smallmatrix} Z^\prime_1&0\\0&1 \end{smallmatrix} \right) =
\vec{S}^\mathrm{edge}_1(Z_1) \vec{S}^\mathrm{left}_1 \vec{S}^\mathrm{edge}_1(Z_1^\prime)
= \vec{M}_\mathrm{FG}.  
\end{split}
\end{equation*}

\textit{Example: n=3.}  Here $N = 4$, the subalgebra $\mathscr{T}_L$ has coordinates $Z_1, Z_2, X_1, Z_1^\prime, Z_2^\prime$, and the embedding $\mathscr{T}_L \hookrightarrow \mathscr{S}^1_1 \otimes \mathscr{S}^1_2 \otimes \mathscr{S}^1_1$  is defined by  (compare the $n=4$ case, Figure \ref{fig:embedding-into-snake-move-algebra})
\begin{equation*}
	Z_1 \mapsto z_1^{(1)}, 
\quad  Z_2 \mapsto z_2^{(1)} z_2^{\prime(1)} z_2^{(2)}, 
\quad  X_1 \mapsto z_1^{\prime(1)} z_1^{(2)} x_1^{(2)} z_1^{\prime(2)} z_1^{(3)}, 
\quad Z_1^\prime \mapsto z_1^{\prime(3)}, 
\quad  Z_2^\prime \mapsto z_2^{\prime(2)} z_2^{(3)} z_2^{\prime(3)}
\end{equation*}
where we have suppressed the tensor products.  Note in this case there is a unique snake-sequence $(\sigma^i)_{i=1, \dots, 4}$ so there is only one associated embedding of $\mathscr{T}_L$.  Equation \eqref{eq:last-equation-theorem-1} reads
\begin{equation*}
\begin{split}
	\vec{M} &= \vec{M}_1 \vec{M}_2 \vec{M}_1 = 
	\underline{\vec{Z}_1}^{(1)} \underline{\vec{Z}_2}^{(1)} \underline{\vec{X}_0}^{(1)} \vec{Z}_1^{\prime(1)} \vec{Z}_2^{\prime(1)} \cdot 
	\vec{Z}_1^{(2)} \vec{Z}_2^{(2)} \underline{\vec{X}_1}^{(2)} \vec{Z}_1^{\prime(2)} \vec{Z}_2^{\prime(2)} \cdot 
	\vec{Z}_1^{(3)} \vec{Z}_2^{(3)} \underline{\vec{X}_0}^{(3)} \underline{\vec{Z}_1^\prime}^{(3)} \underline{\vec{Z}_2^\prime}^{(3)} \\
	&=  
\underline{\vec{Z}_1}^{(1)}  \cdot
\underline{\vec{Z}_2}^{(1)}  \vec{Z}_2^{\prime(1)}  \vec{Z}_2^{(2)} \cdot
\underline{\vec{X}_0}^{(1)} \cdot
 \vec{Z}_1^{\prime(1)} \vec{Z}_1^{(2)} \underline{\vec{X}_1}^{(2)}
                       \vec{Z}_1^{\prime(2)} \vec{Z}_1^{(3)} \cdot
\underline{\vec{X}_0}^{(3)} \cdot
\underline{\vec{Z}_1^\prime}^{(3)}  \cdot
  \vec{Z}_2^{\prime(2)}  \vec{Z}_2^{(3)} 
\underline{\vec{Z}_2^\prime}^{(3)} \\
	&= \vec{S}^\mathrm{edge}_1(Z_1) \vec{S}^\mathrm{edge}_2(Z_2) \vec{S}_1^\mathrm{left} \vec{S}^\mathrm{left}_2(X_1) \vec{S}^\mathrm{left}_1 \vec{S}^\mathrm{edge}_1(Z^\prime_1) \vec{S}^\mathrm{edge}_2(Z^\prime_2) = \vec{M}_\mathrm{FG}
\end{split}
\end{equation*}
where for the third equality we have used the reformulation \eqref{eq:reformulation-of-part2-of-lemma} of part (2) of Lemma \ref{lem:technical-lemma} to commute the matrices.  Note that the ordering of terms in any of the seven groupings in the fourth expression is immaterial.  The fourth equality uses the embedding $\mathscr{T}_L \hookrightarrow \mathscr{S}^1_1 \otimes \mathscr{S}^1_2 \otimes \mathscr{S}^1_1$.  

\textit{General case.}  As we saw in the examples, the expression $\vec{M} = \prod_{i=1}^{N-1} \vec{M}_{j_i}$ is a product of distinct terms of the form $\vec{Z}_k^{(i)}$, $\vec{X}_j^{(i)}$, or $\vec{Z}_k^{\prime(i)}$.  Let $A$ be the set of terms, that is, \hbox{$A=\cup_{i=1,2,\dots,N-1} \{ \vec{Z}_k^{(i)}, \vec{X}^{(i)}_{j_i - 1}, \vec{Z}^{\prime (i)}_k; \quad k=1,2,\dots n-1 \}$}.
Besides terms of the form $\vec{X}_0^{(i)}$, there is one term in $A$ for each coordinate $z_k^{(i)}, x_j^{(i)}, z^{\prime(i)}_k$ of $\bigotimes_{i=1}^{N-1} \mathscr{S}^1_{j_i}$.  We show that there is an algorithm that commutes these terms into the correct groupings, as in the above examples.  

There is a distinguished subset $A_L \subset A$, precisely defined in the next paragraph.  In the example $n=2$, $A_L = A$, and in the example $n=3$, the terms in $A_L$ are underlined above.  All the $\vec{X}_0^{(i)}$ terms are in $A_L$.  Besides the $\vec{X}_0^{(i)}$ terms, there is one term in $A_L$ for each coordinate $Z_k, X_j, Z^\prime_k$ of $\mathscr{T}_L$; see  Figure \ref{fig:embedding-into-snake-move-algebra}.  As another example, for $n=4$ and our usual preferred snake sequence $(\sigma^i)_i$, then $A_L = \{ \vec{Z}_1^{(1)}, \vec{Z}_2^{(1)}, \vec{Z}_3^{(1)}, \vec{X}_0^{(1)}, \vec{X}_1^{(2)}, \vec{X}_2^{(3)}, \vec{X}_0^{(4)}, \vec{X}_1^{(5)}, \vec{X}_0^{(6)}, \vec{Z}_1^{\prime(6)}, \vec{Z}_2^{\prime(6)}, \vec{Z}_3^{\prime(6)} \}$; see Figures~\ref{fig:quantum-snake-sweep-example},~\ref{fig:embedding-into-snake-move-algebra}.  

More precisely, the general definition of $A_L \subset A$, valid for any snake sequence $(\sigma^i)_{i=1,2,\dots,N}$, is as follows.  First, $\vec{Z}^{(1)}_k=\vec{S}_k^\mathrm{edge}(z_k^{(1)})$, $\vec{Z}^{\prime(N-1)}_k=\vec{S}_k^\mathrm{edge}(z^{\prime(N-1)}_k)$, and $\vec{X}^{(i)}_0=\vec{S}_1^\mathrm{left}$ are in $A_L$ for all $k=1,2,\dots,n-1$ and for all $1 \leq i \leq N-1$ such that $j_i - 1=0$.  And $\vec{X}^{(i)}_{j_i - 1} = \vec{S}^\mathrm{left}_{j_i}(x_{j_i -1 }^{(i)})$ is in $A_L$ for all $1 \leq i \leq N-1$ such that $j_i > 1$.

Recall that the injectivity of the embedding $\mathscr{T}_L \hookrightarrow \bigotimes_{i=1}^{N-1} \mathscr{S}^1_{j_i}$ followed immediately from the property that every coordinate $z_k^{(i)}, x_j^{(i)}, z^{\prime(i)}_k$ of $\bigotimes_{i=1}^{N-1} \mathscr{S}^1_{j_i}$ corresponds to a unique coordinate $Z_k, X_j, Z^\prime_k$ of $\mathscr{T}_L$; see Figure \ref{fig:embedding-into-snake-move-algebra}.  This property thus defines a retraction $r : A \twoheadrightarrow A_L$, namely a surjective function restricting to the identity on $A_L \subset A$ (by definition, $\vec{X}_0^{(i)} \mapsto \vec{X}_0^{(i)}$).  See the next paragraph for a precise definition.  The retraction $r$ can be visualized as collapsing the right side of Figure \ref{fig:embedding-into-snake-move-algebra} to obtain the left side.  

More precisely, in the notation of \S \ref{ssec:formaldefofembedding}, there is a bijection 
$
	f : A -A_0 \longrightarrow \bigcup_{i=1}^{N-1} \mathrm{Coord}^i
$
defined by $f(\vec{Z}^{(i)}_k)=z_k^{(i)}$, $f(\vec{Z}^{\prime(i)}_k)=z^{\prime(i)}_k$, and $f(\vec{X}^{(i)}_{j_i - 1})=x_{j_i -1 }^{(i)}$.  Here, we have put $A_0 = \{ \vec{X}^{(i)}_0; \quad 1 \leq i \leq N-1\text{ such that } j_i - 1 = 0 \}$.  Observe, by definition of $A_L$, that the restricted composition $g$ defined by
$
g = \pi \circ (f|_{A_L - A_0}) : A_L - A_0 \longrightarrow \mathrm{Coord}_L
$
is a bijection, where  $\pi : \bigcup_{i=1}^{N-1} \mathrm{Coord}^i \to \mathrm{Coord}_L$ is defined at the end of \S \ref{ssec:formaldefofembedding}.  The retraction $r : A \to A_L$ is defined on  $A-A_0$ by $r = g^{-1} \circ \pi \circ f$, and as the identity on $A_0 \subset A_L$.  

The desired algorithm grouping the terms in $A$, where there is one grouping per term in $A_L$, is defined by selecting an ungrouped term $a \in A$ and commuting it left or right until it is adjacent to $r(a) \in A_L$.  Here, the terms are viewed in the expression for $\vec{M}$.  This commutation is possible by part (2) of Lemma \ref{lem:technical-lemma}, that is, \eqref{eq:reformulation-of-part2-of-lemma}.  

More precisely, in the expression for $\vec{M}$ at step $s$ of the algorithm, for each $a_0 \in A_L$ let $\ell(a_0,s)$ denote the length of the longest chain of adjacent terms $a \in r^{-1}(a_0)$ such that this chain contains $a_0$.  For instance, in the $n=3$ example above, for $a_0 = \vec{X}_1^{(2)}$, initially the length of the chain containing $a_0$ is $2$, while at the end of the algorithm this length is $5=|r^{-1}(a_0)|$.  Assuming for the moment that the algorithm is well-defined, that is, that the commutation is possible, we see that $\ell(a_0, s) \leq \ell(a_0, s+1)$ for all $a_0 \in A_L$ and for all steps $s$, and moreover that at least one of these inequalities is strict at each step.  It follows that the algorithm terminates, at which point the length $\ell(a_0,s_\mathrm{term})$ of the chain containing $a_0$ is $|r^{-1}(a_0)|$ for all $a_0 \in A_L$.  Thus, in the expression for $\vec{M}$ at the end of the algorithm, replacing each string $\prod_{a \in r^{-1}(a_0)} a$ with $\vec{S}^\mathrm{edge}_k(Z_k)$, $\vec{S}^\mathrm{left}_{j+1}(X_j)$, or $\vec{S}^\mathrm{edge}_k(Z^\prime_k)$, depending on $a_0$, completes the proof.  It only remains to show that the commutations at each step of the algorithm are possible.  Only the diagonal matrices $\vec{Z}^{(i)}_k$, or $\vec{Z}^{\prime (i)}_k$, are ``moving'' during the commutation, and these matrices commute with each other.  So, by \eqref{eq:reformulation-of-part2-of-lemma}, we just need to argue that, upon commuting $a=\vec{Z}^{(i)}_k$, say,  until it is adjacent to $a_0=r(a)$, we do not need to commute $\vec{Z}^{(i)}_k$ past any $\vec{X}^{(i^{\prime})}_{k-1}$.  For concreteness, assume $a_0$ is of the form $\vec{X}^{(i^{\prime\prime})}_k$ with $i^{\prime\prime} \leq i$.  The argument is analogous in the cases where $a_0$ is of the form $\vec{X}^{(i^{\prime\prime})}_k$ with $i^{\prime\prime} > i$, or $\vec{Z}_k^{(1)}$, or $\vec{Z}_k^{\prime(N-1)}$.  The claim is clear when $i^{\prime\prime}=i$, so assume $i^{\prime\prime} < i$, so, in particular, $\vec{Z}^{(i)}_k$ is being commuted to the left until it is just to the right of $\vec{X}^{(i^{\prime\prime})}_k$.  Note such a $\vec{Z}^{(i)}_k$ appears as a horizontal edge lying over the top of the small downward facing triangle corresponding to $\vec{X}^{(i^{\prime\prime})}_k$; compare Figure \ref{fig:embedding-into-snake-move-algebra}.  In the notation of \S \ref{ssec:formaldefofembedding}, the horizontal segment $\mathrm{seg}(f(\vec{Z}^{(i)}_k)) \in \mathrm{Seg}_L$ in the discrete triangle $\Theta_{n-1}$ is of the form $\overline{(k, \beta, n-1-k-\beta)(k-1, \beta, n-k-\beta)}$.  Thus, the key observation is that if some snake-move matrix $\vec{M}_{j_{i^{\prime}}}=\vec{M}_k$ contributes $\vec{X}^{(i^{\prime})}_{k-1}$ to $\vec{M}$, then either $(1)$ the bottom snake $\sigma^{i^\prime}$ of the $i^\prime$-snake-move is later in the snake sequence than the bottom snake $\sigma^i$ of the $i$-snake-move, in particular $i \leq i^\prime$; or, (2) the top snake $\sigma^{i^\prime+1}$ of the $i^\prime$-snake-move is earlier in the snake sequence than the bottom snake $\sigma^{i^{\prime\prime}}$ of the $i^{\prime\prime}$-snake-move, in particular $i^\prime + 1 \leq i^{\prime\prime}$.  In the former case, $\vec{X}^{(i^{\prime})}_{k-1}$ lies to the right of $\vec{Z}^{(i)}_k$, and in the latter case $\vec{X}^{(i^{\prime})}_{k-1}$ lies to the left of $\vec{X}^{(i^{\prime\prime})}_k$.  
\end{proof}

	\subsection{Setup for the quantum right matrix}
	\label{sec:setup-for-the-quantum-right-matrix}

We end with a few words about the proof for the quantum right matrix $\vec{M}_\mathrm{FG} = \vec{R}^\omega$, which essentially goes the same as for the left matrix.

\begin{figure}[htb]
	\centering
	\includegraphics[width=.66\textwidth]{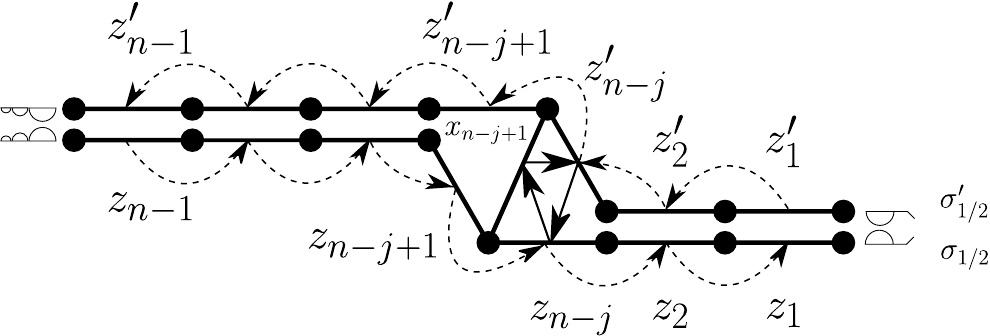}
	\caption{\small Right diamond snake-move algebra ($j = 2, \dots, n-1$)}
	\label{fig:snake-move-algebra-right-diamond}
\end{figure}

\begin{figure}[htb]
	\centering
	\includegraphics[width=.66\textwidth]{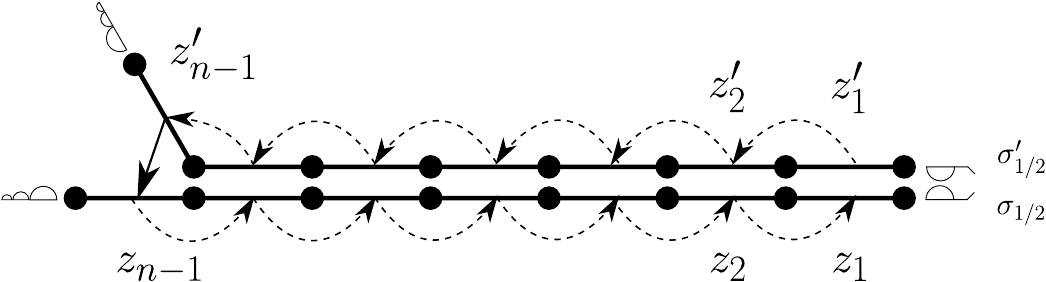}
	\caption{\small Right tail snake-move algebra ($j = 1$)}
	\label{fig:snake-move-algebra-right-tail}
\end{figure}

	\textit{(i)}  The right version of the $j$-th snake algebra $\mathscr{S}^\omega_j$ for $j=1,2,\dots,n-1$ is given by replacing the quivers of Figures \ref{fig:snake-move-algebra-left-diamond} and \ref{fig:snake-move-algebra-left-tail} by the quivers shown in Figures \ref{fig:snake-move-algebra-right-diamond} and \ref{fig:snake-move-algebra-right-tail}.  

	\textit{(ii)}  The $j$-th quantum snake-move matrix $\vec{M}_j$ of Proposition \ref{lem:def-of-quantum-elementary-matrix-left} is replaced by
\begin{equation*}
		\vec{M}_j 
		:=
		\left[
		\left( \prod_{k=1}^{n-1} \vec{S}^\mathrm{edge}_k(z_{k}) \right) 
		\vec{S}^\mathrm{right}_j(x_{n-j+1})
		\left( \prod_{k=1}^{n-1} \vec{S}^\mathrm{edge}_k(z'_{k}) \right)
		\right]
		\quad  
		\in  \mathrm{M}_n(\mathscr{S}^\omega_j).
\end{equation*}
Note, when $j=1$, the matrix $\vec{S}^\mathrm{right}_1(x_{n}) = \vec{S}^\mathrm{right}_1$ is well-defined, despite $x_n$ not being~defined.  
 
	\textit{(iii)}  The subalgebra $\mathscr{T}_R \subset \mathscr{T}_n^\omega$ is generated by all but the $Z^{\prime \pm 1/n}_j\text{'s}$; see Figures \ref{fig:left-and-right-quantum-matrices} and~\ref{fig:n=4-left-and-right-matrices-example}.

\appendix

\section{Proof of Proposition \ref{lem:def-of-quantum-elementary-matrix-left}}
\label{sec:proofofsnakemoveslnq}

\begin{lemma}\label{lem:prodofweylequals1}
If $ZW=q^\epsilon WZ$ in some quantum torus $\mathscr{T}$, and if $\sum_{i=1}^m r_i = 0$, then
\begin{equation*}
\prod_{i=1}^m [Z^{r_i} W^{r_i}]=1 \quad \in \mathscr{T}.
\end{equation*}
\end{lemma}
\begin{proof}
Using $(\sum_i r_i)^2/2=\sum_i r_i^2/2 + \sum_{i<j} r_i r_j$, we compute
\begin{gather*}
\prod_i [Z^{r_i} W^{r_i}]=q^{-\epsilon\sum_i r_i^2/2} Z^{r_1} W^{r_1} Z^{r_2} W^{r_2} \cdots Z^{r_m} W^{r_m}
\\  = q^{-\epsilon\sum_i r_i^2/2} q^{-\epsilon \sum_{i<j} r_i r_j} Z^{\sum_i r_i} W^{\sum_i r_i} = q^{-\epsilon \left( \sum_i r_i \right)^2/2} \cdot Z^0 \cdot W^0 = 1.  \qedhere
\end{gather*}
\end{proof}

\begin{proof}[Proof of Proposition {\upshape\ref{lem:def-of-quantum-elementary-matrix-left}}]
As a shorthand, put $L_{i\ell} := \left(\vec{S}^\mathrm{left}_j(x_{j-1})\right)_{i\ell}$ and $\widetilde{E}_{ii} := \prod_{k=1}^{n-1} \left(\vec{S}^\mathrm{edge}_k(z_k)\right)_{ii}$ and $\widetilde{E}^\prime_{ii} := \prod_{k=1}^{n-1} \left(\vec{S}^\mathrm{edge}_k(z^\prime_k)\right)_{ii}$.  By Definition \ref{def:pointsofmnq}, and by the structure of the matrix $\vec{M}_j$, the following three relations are needed to establish that $\vec{M}_j$ is in $\mathrm{M}_n^q(\mathscr{S}_j^\omega)$:
\begin{enumerate}
\item\label{item:appen1firstequation}  
$\left[  \widetilde{E}_{jj} L_{j(j+1)} \widetilde{E}^\prime_{(j+1)(j+1)}  \right]  
\left[  \widetilde{E}_{jj} L_{jj} \widetilde{E}^\prime_{jj}  \right]
=
q
\left[  \widetilde{E}_{jj} L_{jj} \widetilde{E}^\prime_{jj}  \right]
\left[  \widetilde{E}_{jj} L_{j(j+1)} \widetilde{E}^\prime_{(j+1)(j+1)}  \right]$;
\item\label{item:appen1secondequation}  
$\left[  \widetilde{E}_{(j+1)(j+1)} L_{(j+1)(j+1)} \widetilde{E}^\prime_{(j+1)(j+1)} \right]
\left[  \widetilde{E}_{jj} L_{j(j+1)} \widetilde{E}^\prime_{(j+1)(j+1)}  \right]$\\
$=
q
\left[  \widetilde{E}_{jj} L_{j(j+1)} \widetilde{E}^\prime_{(j+1)(j+1)}  \right] 
\left[  \widetilde{E}_{(j+1)(j+1)} L_{(j+1)(j+1)} \widetilde{E}^\prime_{(j+1)(j+1)} \right]$;
\item\label{item:appen1thirdequation}
$\left[  \widetilde{E}_{ii} L_{ii} \widetilde{E}^\prime_{ii} \right]
\left[  \widetilde{E}_{kk} L_{kk} \widetilde{E}^\prime_{kk} \right]
=
\left[  \widetilde{E}_{kk} L_{kk} \widetilde{E}^\prime_{kk} \right]
\left[  \widetilde{E}_{ii} L_{ii} \widetilde{E}^\prime_{ii} \right]$ for $i < k$.
\end{enumerate}

We begin with Equation \eqref{item:appen1firstequation}.  Note $L_{j(j+1)}=L_{jj}=x_{j-1}^{(1-j)/n}$, and
$\left[  \widetilde{E}_{jj} L_{j(j+1)} \widetilde{E}^\prime_{(j+1)(j+1)}  \right]  
=
\left[  \widetilde{E}_{jj} L_{jj} \widetilde{E}^\prime_{jj} z_j^{\prime -1}  \right]$.  So it suffices to show that commuting $z_j^{\prime -1}$ from left to right across $\widetilde{E}_{jj} L_{jj} \widetilde{E}^\prime_{jj}$ contributes a factor $q$, equivalently, $z_j^\prime$ contributes $q^{-1}$.  Indeed, in $\widetilde{E}_{jj} L_{jj} \widetilde{E}^\prime_{jj}$, we see $z_j^\prime$ only interacts with $x_{j-1}^{(1-j)/n}$ with weight $q^2$; with $\left(\vec{S}^\mathrm{edge}_j(z_j)\right)_{jj}=z_j^{(n-j)/n}$ with weight $q^{-2}$; with $\left(\vec{S}^\mathrm{edge}_{j+1}(z^\prime_{j+1})\right)_{jj}=z_{j+1}^{\prime (n-j-1)/n}$ with weight $q$; and, with $\left(\vec{S}^\mathrm{edge}_{j-1}(z^\prime_{j-1})\right)_{jj}=z_{j-1}^{\prime (1-j)/n}$ with weight $q^{-1}$.  The total exponent of $q$ that $z^\prime_j$ contributes is therefore
$(2*(1-j)-2*(n-j)+1*(n-j-1)-1*(1-j))/n=-1$.

Next we check Equation \eqref{item:appen1secondequation}.  Note $L_{(j+1)(j+1)}=L_{j(j+1)}=x_{j-1}^{(1-j)/n}$, and
$\left[  \widetilde{E}_{jj} L_{j(j+1)} \widetilde{E}^\prime_{(j+1)(j+1)}  \right]
=
\left[  z_j \widetilde{E}_{(j+1)(j+1)} L_{(j+1)(j+1)} \widetilde{E}^\prime_{(j+1)(j+1)} \right]$.  So it suffices to show that commuting $z_j$ from right to left across $\widetilde{E}_{(j+1)(j+1)} L_{(j+1)(j+1)} \widetilde{E}^\prime_{(j+1)(j+1)}$ contributes a factor $q$.  Indeed, in $\widetilde{E}_{(j+1)(j+1)} L_{(j+1)(j+1)} \widetilde{E}^\prime_{(j+1)(j+1)} $, we see $z_j$ only interacts with $x_{j-1}^{(1-j)/n}$ with weight $q^2$ (because it's moving from right to left); with $\left(\vec{S}^\mathrm{edge}_j(z_j^\prime)\right)_{(j+1)(j+1)}=z_j^{\prime -j/n}$ with weight $q^{-2}$; with $\left(\vec{S}^\mathrm{edge}_{j+1}(z_{j+1})\right)_{(j+1)(j+1)}=z_{j+1}^{ (n-j-1)/n}$ with weight $q$; and, with $\left(\vec{S}^\mathrm{edge}_{j-1}(z_{j-1})\right)_{(j+1)(j+1)}=z_{j-1}^{ (1-j)/n}$ with weight $q^{-1}$.  The total exponent of $q$ that $z_j$ contributes is therefore 
$(2*(1-j)-2*(-j)+1*(n-j-1)-1*(1-j))/n=+1$.

Lastly we verify Equation \eqref{item:appen1thirdequation}.  Note that the terms in $\left[  \widetilde{E}_{ii} L_{ii} \widetilde{E}^\prime_{ii} \right]$ appear in the forms $x_{j-1}^{\alpha^i}$ or $z_\ell^{\beta^i_\ell} z_\ell^{\prime\beta^i_\ell}$ for $\ell=1,2,\dots,n-1$.  
We see from the quivers in Figures \ref{fig:snake-move-algebra-left-diamond} and \ref{fig:snake-move-algebra-left-tail} that terms of this form mutually commute.  So 
\begin{equation*}
\left[  \widetilde{E}_{ii} L_{ii} \widetilde{E}^\prime_{ii} \right] = \left[ x_{j-1}^{\alpha^i} \right] \prod_\ell \left[ z_\ell^{\beta^i_\ell} z_\ell^{\prime\beta^i_\ell} \right]
\end{equation*} 
where the right hand side is independent of the ordering of the terms.    Similarly for $\left[  \widetilde{E}_{kk} L_{kk} \widetilde{E}^\prime_{kk} \right]$.  It follows that $\left[  \widetilde{E}_{ii} L_{ii} \widetilde{E}^\prime_{ii} \right]$ commutes with $\left[  \widetilde{E}_{kk} L_{kk} \widetilde{E}^\prime_{kk} \right]$ for all $i,k$.

It remains to check that the quantum determinant of $\vec{M}_j$ is equal to $1 \in \mathscr{S}_j^\omega$.  Since $\vec{M}_j$ is in $\mathrm{M}_n^q(\mathscr{S}_j^\omega)$ and is triangular, by Remark \ref{rem:remarksaboutquantummatrices}(\ref{item:simplifiedqdet}) we have $\mathrm{Det}^q(\vec{M}_j)=\prod_i (\vec{M}_j)_{ii}$.  As the only $\ell$ such that $z_\ell$ does not commute with $z^\prime_\ell$ is $\ell=j$, the above equation~becomes
\begin{equation*}
(\vec{M}_j)_{ii}=\left[  \widetilde{E}_{ii} L_{ii} \widetilde{E}^\prime_{ii} \right] = \left[ z_j^{\beta^i_j} z_j^{\prime\beta^i_j} \right] x_{j-1}^{\alpha^i} \prod_{\ell\neq j} \left(z_\ell z^\prime_\ell \right)^{\beta^i_\ell}.
\end{equation*} 
Note $\sum_i \alpha^i = 0$ and $\sum_i \beta^i_\ell = 0$ for all $\ell=1,2,\dots,n-1$ by construction of $\vec{M}_j$ (this is where the normalizing factors come in; compare the example below Proposition \ref{lem:def-of-quantum-elementary-matrix-left}).  It follows that (where the last equality is by Lemma \ref{lem:prodofweylequals1})
\begin{equation*}
\mathrm{Det}^q(\vec{M}_j)
=\left( \prod_i \left[ z_j^{\beta^i_j} z_j^{\prime\beta^i_j} \right]\right) \left(\prod_i x_{j-1}^{\alpha^i} \prod_{\ell\neq j} \left(z_\ell z^\prime_\ell \right)^{\beta^i_\ell}\right)
=\left(\prod_i \left[ z_j^{\beta^i_j} z_j^{\prime\beta^i_j} \right]\right)\cdot 1=1. \qedhere
\end{equation*}
\end{proof}

\bibliographystyle{alpha}
\bibliography{references.bib}

\end{document}